\theoremstyle{plain}
\newtheorem{lemma}{Lemma}
\newtheorem{theorem}[lemma]{Theorem}
\newtheorem{prop}[lemma]{Proposition}
\numberwithin{lemma}{section}
\numberwithin{equation}{section}
\theoremstyle{definition}
\theoremstyle{remark}
\newtheorem{remark}{Remark}[section]
\newtheorem{example}[remark]{Example}
\newcommand{\op}{{\rm op}}
\def\Ddots{\mathinner{\mkern1mu\raise\p@
\vbox{\kern7\p@\hbox{.}}\mkern2mu
\raise4\p@\hbox{.}\mkern2mu\raise7\p@\hbox{.}\mkern1mu}}
\begin{document}
\author{Edgar Assing}

\address{Mathematisches Institut, Endenicher Allee 60, 53115 Bonn}

\email{assing@math.uni-bonn.de}

\title[A Density Theorem for Borel-type congruence Subgroups]{A Density Theorem for Borel-type congruence Subgroups and arithmetic Applications}

\begin{abstract}
We use a (pre)-Kuznetsov type formula to prove a density theorem for the Borel-type congruence subgroup of ${\rm GL}_n$. This has some arithmetic applications to optimal lifting and counting considered earlier by A. Kamber and H. Lavner for ${\rm GL}_3$. 	
\end{abstract}

\thanks{The author is supported by the Germany Excellence Strategy grant EXC-2047/1-390685813 and also partially funded by the Deutsche Forschungsgemeinschaft (DFG, German Research Foundation) -- Project-ID 491392403 -- TRR 358.}
\subjclass[2020]{Primary:  11F72, 11L05}
\keywords{Exceptional eigenvalues, density hypothesis, Poincar\'e series}

\setcounter{tocdepth}{2}  \maketitle 

\maketitle

%\noindent\textbf{Statements and Declarations:} 
%\begin{itemize}
%	\item \textit{Conflicts of interest statement: }The author has no relevant financial or non-financial interests to disclose.
%	\item \textit{Data Availability statement: }Data sharing not applicable to this article as no datasets were generated or analysed during the current study.
%\end{itemize}

\section{Introduction}

One of the central problems in the theory of automorphic forms is the generalized Ramanujan Conjecture (GRC). For ${\rm GL}_n$ it predicts that all cuspidal automorphic representations are tempered. Already in the smallest interesting case, namely $n=2$, this is far from being solved. Good surveys on the topic are \cite{BB, Sa4}. A task more tractable than proving the full conjecture, which is also sufficient for many arithmetic applications, is to show that there are few exceptions to the GRC. When appropriately formalized this leads to Sarnak's density hypothesis. In this note we continue the systematic study of Sarnak's density hypothesis in higher rank. More precisely, we establish a new density theorem for Borel-type congruence subgroups. On the way the method from the prequels \cite{Bl, AB} is enhanced by a new key idea. 

For a congruence lattice $\Gamma\subseteq {\rm SL}_n(\mathbb{Z})$, let $\mathcal{F}_{\Gamma}$ be a maximal orthogonal collection of Hecke-Maa\ss\  cusp forms $\varpi\in L^2(\Gamma\backslash {\rm SL}_n(\mathbb{R})/{\rm SO}_n(\mathbb{R}))$. To each element $\varpi\in \mathcal{F}_{\Gamma}$ we associate the spectral parameter $\mu_{\varpi}=(\mu_{\varpi}(1),\ldots,\mu_{\varpi}(n))\in \mathbb{C}^n$ and define
\begin{equation}
	\sigma_{\varpi} = \max_i\vert\Re(\mu_{\varpi}(i))\vert.\nonumber
\end{equation}
We normalize the spectral parameter so that $\varpi$ is tempered at infinity if and only if $\sigma_{\varpi}=0$. The constant function $\mathbf{1}$, which is of course not cuspidal, has parameter $(\frac{1-n}{2},\frac{3-n}{2},\ldots,\frac{n-3}{2},\frac{n-1}{2})$. In particular $\sigma_{\mathbf{1}}=\frac{n-1}{2}$. We write $\mathcal{F}_{\Gamma}(M)$ for the finite sub-collection with spectral parameter bounded by $M\in \mathbb{R}_{>0}$. The (spherical) density hypothesis can now be formulated as
\begin{equation}
	N_{\Gamma}(\sigma;M) = \sharp \{\varpi\in \mathcal{F}_{\Gamma}(M)\colon \sigma_{\varpi}\geq \sigma\} \ll_{\epsilon,n} \sharp \mathcal{F}_{\Gamma}(M)^{1-\frac{2\sigma}{n-1}+\epsilon}.\label{eq:strong}
\end{equation}
This is a linear interpolation between the two extreme cases
\begin{equation}
	N_{\Gamma}(0;M) = \sharp \mathcal{F}_{\Gamma}(M) \text{ and }N_{\Gamma}(\frac{n-1}{2};M)+\sharp\{\mathbf{1}\}=1,\nonumber
\end{equation}
so that the bound \eqref{eq:strong} is analogous to a convexity bound in analytic number theory.

Achieving \eqref{eq:strong} with the required uniformity in $M$ is a formidable problem which we can not solve at present. Therefore, we fix $M$ and focus solely on the volume aspect. In view of the bound $\sharp \mathcal{F}_{\Gamma}(M)\ll_M [{\rm SL}_n(\mathbb{Z})\colon \Gamma]$, given for example in \cite{Do}, we aim for the estimate
\begin{equation}
	N_{\Gamma}(\sigma;M) \ll_{M,n,\epsilon} [{\rm SL}_n(\mathbb{Z})\colon \Gamma]^{1-\frac{2\sigma}{n-1}+\epsilon}. \label{eq:weak}
\end{equation}
This is now amenable to current technology when $\Gamma$ varies in a suitable family with growing index.

For $n=2$ (or even more generally rank one) such density theorems are fairly well understood and we refer to \cite{Hum, Hux, Iw, Sa1} and the reference within. In higher rank the situation is much more complicated. Indeed, when the problem is set up as above for certain level families, the first major breakthrough in arbitrary rank was achieved in \cite{Bl} where an improved (i.e. sub-convex) density theorem was shown for the standard (Hecke-type) congruence subgroup $\Gamma_0(q)$ of ${\rm GL}_n$. Earlier, in \cite{BBM} the ${\rm GL}_3$-case was treated. Related but weaker density estimates also appear in \cite[Section~9]{BrM}. For the principal congruence subgroup $\Gamma(q)$ Sarnak's density hypothesis was established in \cite{AB, ABN}. It should be noted that spectral families have been studied in \cite{BBR, MT, Ja}. 

Of course Sarnak's density hypothesis can be formulated for more general reductive groups than ${\rm GL}_n$. However, in general the existence of non-tempered cuspidal representations is known, so that the formulation of the GRC is slightly more complicated. Nonetheless, these exceptions are (expected to be) of low density, so that one expects the density hypothesis to hold as formulated in \eqref{eq:strong}. This has been studied in \cite{A, Man} for certain families of congruence lattices in ${\rm PGSp}_4$.\footnote{The arguments in \cite{Man} closely follow \cite{Bl} using the Kuznetsov formula. Thus, their results only include generic cuspidal representations and non-tempered cusp forms are not included by default. Of course one could account for them manually. This has been carried out in \cite{A} for the paramodular group.}

In this note we consider the family of Borel-type congruence subgroups: 
\begin{equation}
	\Gamma_2(q) = \left[\begin{matrix} \mathbb{Z} & \mathbb{Z}& \cdots & \mathbb{Z}\\ q\mathbb{Z} & \ddots &\ddots & \vdots \\ \vdots &\ddots & \ddots &\mathbb{Z}\\ q \mathbb{Z} & \cdots & q\mathbb{Z} & \mathbb{Z}  \end{matrix} \right]\cap {\rm SL}_n(\mathbb{Z}).\label{lattice}
\end{equation}
The main result is:
\begin{theorem}
For $q=l$ prime and $\sigma\geq 0$ we have
\begin{equation}
	N_{\Gamma_2(q)}(\sigma;M) \ll_{n, \varepsilon}M^K[{\rm SL}_n(\Bbb{Z}) : \Gamma_2(q)]^{1 - \frac{2\sigma}{n-1} + \varepsilon}, \nonumber
\end{equation}
for an absolute constant $K$ depending only on $n$.
\end{theorem}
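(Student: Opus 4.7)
The overall strategy is to follow the amplified (pre-)Kuznetsov / Poincar\'e series approach developed in \cite{Bl} for $\Gamma_0(q)$ and extended in \cite{AB} for $\Gamma(q)$, adapted to the Borel-type lattice $\Gamma_2(q)$. Note that $[{\rm SL}_n(\mathbb{Z}):\Gamma_2(q)] \asymp q^{n(n-1)/2}$, the size of the full flag variety over $\mathbb{F}_q$, so the target bound reads $N_{\Gamma_2(q)}(\sigma;M)\ll_\varepsilon M^K q^{n(n-1)/2-n\sigma+\varepsilon}$.

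First I would construct a Poincar\'e series $P_f$ on $\Gamma_2(q)\backslash{\rm SL}_n(\mathbb{R})$ twisted by a generic character of the maximal unipotent, with archimedean test function $f$ whose spherical transform concentrates on $|\Im \mu|\leq M$ and detects forms with $\sigma_\varpi\geq\sigma$ with positive weight. The second moment $\langle P_f, P_f\rangle$ then admits two expansions: spectrally, a positive weighted sum over $\mathcal{F}_{\Gamma_2(q)}$ of squared first Fourier coefficients of each $\varpi$ against $|\widetilde{h}(\mu_\varpi)|^2$; geometrically, a sum of Kloosterman orbital integrals indexed by Bruhat cells of ${\rm SL}_n$ restricted to $\Gamma_2(q)$.

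Next, I would insert a Hecke amplifier $\bigl|\sum_{\ell\sim L,\, \ell\nmid q} x_\ell \lambda_\varpi(\ell)\bigr|^2$ with $x_\ell$ chosen so the amplifier is bounded below by a suitable power of $L$ on the target set $\sigma_\varpi\geq\sigma$. Opening the square converts the geometric side into Hecke-twisted Kloosterman sums of modulus increased by $\ell_1\ell_2$. Balancing the spectral lower bound against the geometric upper bound, and optimizing the free parameter $L$, should produce the claim; this is precisely where the barrier exponent $1-2\sigma/(n-1)$ arises. The archimedean bookkeeping responsible for the harmless factor $M^K$ can be borrowed from \cite{Bl,AB}.

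The key difficulty, and presumably the locus of the new idea promised in the introduction, is the Kloosterman sum estimation attached to the Borel-type constraint. For each Weyl element $w$ the relevant sum ranges over the portion of $\Gamma_2(q)$ inside $UwAU$, and the Borel reduction modulo $q$ imposes a mixed upper-triangularity condition on the Bruhat coordinates that is neither as rigid as the $\Gamma(q)$ condition nor as loose as the $\Gamma_0(q)$ condition. I expect the proof to decompose $\Gamma_2(q)\cap wUw^{-1}$ parabolically according to how $w$ permutes the simple roots, reducing to Kloosterman sums on smaller factor groups where Weil-type bounds or their higher-rank cousins apply, and to extract the power saving whose exponent exactly matches $n(n-1)/2$. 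The longest Weyl element looks to be the most delicate cell, and is where the sharpest saving must be obtained to reach the convexity exponent.
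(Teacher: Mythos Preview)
Your overall framework---Poincar\'e series, spectral expansion, geometric Kloosterman expansion---is indeed what the paper does, but two of your key mechanisms are off.

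First, the amplification: the paper does \emph{not} use a Hecke amplifier at finite primes. A sum $\bigl|\sum_{\ell\sim L}x_\ell\lambda_\varpi(\ell)\bigr|^2$ cannot detect the archimedean quantity $\sigma_\varpi=\max_i|\Re\mu_{\varpi}(i)|$: there is no a priori lower bound on unramified Hecke eigenvalues in terms of $\sigma_\infty(\pi)$. Instead, following \cite{Bl,AB}, the paper takes the archimedean test function $E^{(X)}$ with $X=(Z,1,\dots,1)$; the inner product $\langle E^{(X)},W_\mu\rangle_{\tilde T}$ is of size $Z^{\eta_1+\sigma(\mu)}$ by the asymptotics of the Whittaker function, so the spectral side carries a factor $Z^{2\sigma_\varpi}$. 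The exponent $1-\frac{2\sigma}{n-1}$ then arises not from optimising an $L$, but from the single choice $Z\asymp\mathcal V_l^{1/(n-1)}=l^{n/2}$, which is the largest $Z$ for which the geometric side is still $\ll\mathcal V_l^{1+\varepsilon}Z^{2\eta_1}$.

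Second, the geometric side: the paper uses no Weil-type cancellation at all, and the difficulty is \emph{not} localised at the long Weyl element. For $\Gamma_0(q)$ and $\Gamma(q)$ only the special element $w_\ast=w_{1,n-2,1}$ survives preliminary sieving, but for $\Gamma_2(l)$ many admissible Weyl elements $w_{d_1,\dots,d_r}$ survive (for $n>4$), and estimating the Kloosterman sets $X_\Gamma(c^\ast w)$ individually is hopeless. The new idea is to keep the average over moduli $c$: after factoring out the unramified part trivially, the $l$-part of the $c$-sum is packaged into a Kloosterman-set zeta function which, by a device of Da\-brow\-ski--Reeder, equals $[\mathcal A_w^\chi\phi_1^\chi](\mathbf 1_n)$ for $\phi_1^\chi$ the Iwahori-fixed vector supported on the big cell. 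The required bound $[\mathcal A_w^{\chi_{\mathbf s}^{\rm ad}}\phi_1^{\chi_{\mathbf s}^{\rm ad}}](\mathbf 1_n)\ll l^{-n(r-1)/2}$ is then a combinatorial computation in the Iwahori--Hecke algebra, using the explicit recursion for intertwiners on the standard basis (Proposition~\ref{prop:crucial}). This, not Weil bounds, is the technical heart of the paper and the ``new key idea'' alluded to in the introduction.
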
 

This establishes Sarnak's density hypothesis as stated in \eqref{eq:weak} with polynomial dependence on $M$. The latter is important for arithmetic applications. An alternative statement in the language of automorphic representation is given in Theorem~\ref{th_dens} below. 

\begin{remark}
The restriction to $q$ to a prime is crucial because we exploit at several places, that $\Gamma_2(q)$ gives rise to a Iwahori subgroup at the place corresponding to $q$. With a little bit of work it is probably possible to treat squarefree $q$, but the general case would require some new ideas.
\end{remark}

Following the lead of \cite{Bl} and \cite{AB} we use a (pre)-Kuznetsov formula to derive the density estimate. This produces two key problems. First, one has to give an appropriate lower bound for the spectral side. This involves showing that the first Fourier coefficient of sufficiently many $\varpi$ is not to small.  Second, the geometric side needs to be  bounded from above. This is usually done by estimating the Kloosterman sums trivially, requiring a good understanding of the ramified Kloosterman sets. The novelty in our approach here is that we do not treat these Kloosterman sets individually but on average. As a result we only need to understand the associated Kloosterman-set zeta function. Borrowing a trick from \cite{DR} translates this in the study of intertwining operators acting on unramified principal series. Studying these is still not an easy task in general, but we can exploit the combinatorial structure of the Iwahori Hecke algebra to solve this problem. A more detailed sketch of the proof can be found in Section~\ref{sec2} below.

Using the Kuznetsov formula to study density theorems is a two sided sword. Indeed, the Kuznetsov formula is a relative trace formula involving Fourier coefficients (i.e. Whittaker periods) on the spectral side. It can thus only sees generic constituents of the spectrum (i.e. those with non-zero Whittaker functions). A consequence of this is that one only picks up the generic cuspidal part of the discrete spectrum. This is the reason why we have formulated the density hypothesis only for the cuspidal part. For ${\rm GL}_n$ this is not really a restriction because all cusp forms are generic and the residual spectrum is well understood by \cite{MW2} and can be added by hand. See \cite[Section~7]{AB} where this is done for the principal congruence subgroup. In general, accounting for non-generic parts of the discrete spectrum can be quite challenging. However, the same feature that we just identified as a caveat also makes the Kuznetsov formula a powerful tool in studying the density hypothesis. Indeed, the absence of the residual spectrum from the spectral side also takes care of the constant function $\mathbf{1}$, which is somehow the arch enemy when proving density theorems. 

As usual the density theorem can be applied to derive certain counting and lifting results, see \cite{GK} for a nice survey. In the situation at hand these were already studied for ${\rm SL}_3$ in \cite{KL}. Injecting our density result in the machinery developed in \cite{JK} generalizes the work of \cite{KL} unconditionally to ${\rm SL}_n$. Precise statements can be found in Theorem~\ref{th:counting} and Theorem~\ref{th:lifting} below. \\

\textbf{Acknowledgement:} I would like to thank V. Blomer for many encouraging discussions, which led to some of the key ideas used in the paper. Further, I would like to thank S. Dawydiak for many interesting discussions concerning more abstracts aspects of Iwahori Hecke Algebras. Finally, I thank the referees for their many helpful and insightful comments.

\section{Sketch of the Proof}\label{sec2}

We will now briefly explain the machinery behind the proof of our main theorem. Doing so requires us to use some notation which will be introduced only below. We hope that this paragraph is instructive anyway. Here we stick to the situation $\Gamma=\Gamma_2(q)$ adding only some remarks concerning the lattices $\Gamma_0(q)$ and $\Gamma(q)$. Note that the same argument works more generally. But minor modifications are necessary when $\Gamma\cap U(\mathbb{R})\neq U(\mathbb{Z})$. 

The density theorem will follow from the estimate,
\begin{equation}
\sum_{\varpi\in \mathcal{F}_{\Gamma}(M)}Z^{2\sigma_{\varpi}} \ll [{\rm SL}_n(\mathbb{Z})\colon \Gamma]^{1+\epsilon}\label{eq:basic}
\end{equation}
when we can choose $Z\asymp Z_0(\Gamma)=[{\rm SL}_n(\mathbb{Z})\colon \Gamma]^{\frac{1}{n-1}}$. For the Borel-type congruence lattice we will need $Z_0(\Gamma_2(q)) = q^{\frac{n}{2}}$. Note that for the standard congruence subgroup $\Gamma_0(q)$ considered in \cite{Bl} one needs $Z_0(\Gamma_0(q)) =q$ while the principal congruence subgroup $\Gamma(q)$ considered in \cite{AB} requires $Z_0(\Gamma(q))=q^{n+1}$. 

\begin{remark}
These are the thresholds needed to establish Sarnak's density hypothesis. It is of course possible to go beyond and allow for larger $Z$. This is achieved in \cite{Bl} for $\Gamma_0(q)$ and in \cite{BlM, ABN} for $\Gamma(q)$. Recent work of J. Linn \cite{Lin}, extending \cite{BlM, Mi}, provides non-trivial bounds for $\textrm{GL}_n$-Kloosterman sums. It would be interesting to see if these bounds can be used to produce a subconvex density theorem for $\Gamma_2(q)$. 
\end{remark}

As in the prequels \cite{Bl, AB} we approach this estimate via the (pre)-Kuznetsov formula. We use an appropriately chosen test function, whose finite part is the indicator function $\mathbbm{1}_{I(q)}$ on the open compact subgroup $I(q)\subseteq \textrm{GL}_n(\widehat{\mathbb{Z}})$ of Borel-type. See \eqref{eq:def_Iq} and below for the precise definition of $I(q)$. We obtain an estimate of the form
\begin{equation}
\sum_{\substack{\pi \mid X_{q},\\ \Vert \mu_{\pi}\Vert \leq M}} \frac{J_{\pi}(\mathbbm{1}_{I(q)})}{l(\pi)}\cdot Z^{2\sigma_{\pi}} \ll_{M,n} [{\rm SL}_n(\mathbb{Z})\colon \Gamma]\cdot  \sum_{w\in W}\sum_{\substack{c=(c_1,\ldots,c_{n-1})\in \mathbb{Z},\\ c_i\ll Z}} \frac{\mathcal{O}_{\rm fin}^{\boldsymbol{\psi}}(c^{\ast}w)}{\vert c_1\cdots c_{n-1}\vert} .\nonumber
\end{equation}
On the spectral side we sum over cuspidal automorphic representations contributing to the spectral decomposition of $L^2(X_q)$, where $X_q$ is defined in \eqref{eq:def_Xq} below. Here we encounter the Bessel distribution $J_{\pi}(\cdot)$, see \cite[(2.8)]{Ja} or \eqref{eq:bessel_def} for a definition. The global normalizing constant $l(\pi)$ is related to $L(1,\pi,{\rm Ad})$ via the Rankin-Selberg method and can be handled by \cite{Li} because we only require an upper bound. On the geometric side we encounter the orbital integral $\mathcal{O}_{\rm fin}^{\boldsymbol{\psi}}(c^{\ast}w)$ associated to the lattice $\Gamma_2(q)$, a character $\boldsymbol{\psi}$ of $U$ and a certain diagonal matrix $c^{\ast}$ obtained from the tuple $c$. See \eqref{orb} below for a precise definition.\footnote{The orbital integrals $\mathcal{O}_{\rm fin}^{\boldsymbol{\psi}}(c^{\ast}w)$ can be expressed in terms of classical Kloosterman sums $S_{\Gamma,w}(\ast,\ast, c)$ as they appear in \cite{AB, Bl}. This translation is carried out for example in \cite{Ste}.}

The crucial input on the spectral side is a suitable lower bound for $J_{\pi}(\mathbbm{1}_{I(q)})$. We expect the bound\footnote{The heuristic behind this expectation is that the first Fourier coefficient (on average over $\pi^{I(q)}$) should be one as long as the Whittaker-period is taken with respect to the measure which is self dual with respect to the lattice $U(\mathbb{Z})\cap \Gamma$.}
\begin{equation}
	J_{\pi}(\mathbbm{1}_{I(q)}) \gg_n \dim(\pi^{{\rm SO}_n(\mathbb{R})\cdot I(q)}),\label{eq:expectation_spectral}
\end{equation}
where $\pi^{{\rm SO}_n(\mathbb{R})\cdot I(q)}$ stands for subspace of ${\rm SO}_n(\mathbb{R})\cdot I(q)$-invariant vectors in $\pi$. For the standard congruence subgroup $\Gamma_0(q)$ the analogous version of the estimate \eqref{eq:expectation_spectral} follows from newform theory, see \cite{Bl}. On the other hand, for the principal congruence subgroup $\Gamma(q)$, this estimate is established in \cite{AB, ABN} via a careful analysis of all relevant local representations. Here we establish \eqref{eq:expectation_spectral} in Lemma~\ref{low_bound_bessel} below for $\Gamma_2(q)$ (with $q=l$ prime) exploiting that representations with Iwahori fixed elements have a very nice structure.

The density theorem will now follow, if we can show that for all $\epsilon>0$ and every $w\in W$ we have
\begin{equation}
	\sum_{\substack{c=(c_1,\ldots,c_{n-1})\in \mathbb{Z},\\ c_i\ll Z}} \frac{\mathcal{O}_{\rm fin}^{\boldsymbol{\psi}}(c^{\ast}w)}{\vert c_1\cdots c_{n-1}\vert} \ll_{n,\epsilon} [{\rm SL}_n(\mathbb{Z})\colon \Gamma]^{\epsilon},\nonumber
\end{equation}
as long as $Z\leq Z_0(\Gamma_2(q))$. Note that this is immediate for the trivial Weyl element $w=1$. It is an interesting structural fact that for the lattices $\Gamma_0(q)$ and $\Gamma(q)$ only the special element 
\begin{equation}
w_{\ast} = \left( \begin{matrix} & & 1 \\  &1_{n-2} & \\ 1& &\end{matrix}\right) \nonumber
\end{equation}
survives in the corresponding ranges for $Z$. The reason for this is a very nice interplay between congruence conditions coming from $\Gamma_0(q)$ (resp. $\Gamma(q)$) and admissibility constraints on the modulus $c$ coming from the orbital integral (i.e. the Kloosterman sum). The contribution for $w_{\ast}$ can now be treated trivially using appropriate estimates for the Kloosterman sets
\begin{equation}
	X_{\Gamma}(c^{\ast}w) = \sharp \{(x,y)\in U(\mathbb{Z})\backslash U(\mathbb{Q})\times U_w(\mathbb{Q})/U_w(\mathbb{Z})\colon xc^{\ast}wy\in \Gamma\}. \nonumber
\end{equation}
Even for the relatively simple element $w_{\ast}$ deriving these estimates leads to difficult counting problems solved in \cite[Theorem~3]{Bl} and \cite[Lemma~3.4]{ABN} for $\Gamma_0(q)$ and $\Gamma(q)$ respectively. 

In the case of the Borel-type congruence subgroup $\Gamma_2(q)$ this approach fails because (at least for $n>4$) many Weyl elements survive the preliminary sieving process that takes only congruence conditions and admissibility constraints into account. We first recall that the orbital integrals factorize, so that we can isolate the ramified contribution. More precisely, we replace $c^{\ast}$ by $c^{\ast}\cdot r^{\ast}$ where $r=(r_1,\ldots r_{n-1})$ is such that $r_i\mid q^{\infty}$ (i.e. all prime factors of $r_i$ divide $q$) and $(c_i,q)=1$. Now we remember the admissibility condition on the moduli $c$ and $r$, factor the orbital integrals and estimate the unramified part (i.e. the contribution from the places not dividing $q$) trivially. We end up with
\begin{equation}
	\sum_{\substack{c=(c_1,\ldots,c_{n-1})\in \mathbb{Z},\\ c_i\ll Z}} \frac{\mathcal{O}_{\rm fin}^{\boldsymbol{\psi}}(c^{\ast}w)}{\vert c_1\cdots c_{n-1}\vert} \ll_{n,\epsilon} Z^{\epsilon} \sum_{\substack{r=(r_1,\ldots,r_{n-1})\in \mathbb{Z}^{n-1}, \\ r_i\mid q^{\infty}, \, r_i\ll Z}}\sum_{\substack{c=(c_1,\ldots,c_{n-1})\in \mathbb{Z}^{n-1}, \\ (c_i,q)=1, \, c_i\ll Z/r_i\\ (c_1r_1,\ldots, c_{n-1}r_{n-1})\text{ admissible}}} \frac{X_{\Gamma}(r^{\ast}w)}{\vert r_1\cdots r_{n-1}\vert}.
\end{equation}
For illustrative purposes we have replaced the ramified part of the orbital integral $\prod_{p\mid q}\vert \mathcal{O}^{\boldsymbol{\psi}_p}_p(c^{\ast}r^{\ast}w)\vert$, which turns out to be independent of $c$, with the cardinality of the Kloosterman set $X_{\Gamma}(r^{\ast}w)$. In the main text below we will work directly with the integral. To take admissibility into account we note that, if $w=w_{d_1,\ldots,d_k}$ is as in \eqref{eq:shape_W}, then an admissible modulus $c=(c_1,\ldots,c_{n-1})$ is essentially determined by at most $k-1$ of its entries. We roughly arrive at 
\begin{equation}
	\sum_{\substack{c=(c_1,\ldots,c_{n-1})\in \mathbb{Z},\\ c_i\ll Z}} \frac{\mathcal{O}_{\rm fin}^{\boldsymbol{\psi}}(c^{\ast}w)}{\vert c_1\cdots c_{n-1}\vert} \ll_{n,\epsilon} Z^{k-1+\epsilon}\sum_{\substack{r=(r_1,\ldots,r_{n-1})\in \mathbb{Z}^{n-1}, \\ r_i\mid q^{\infty}, \, r_i\ll Z,\\ \text{admissible}}} \frac{X_{\Gamma}(r^{\ast}w)}{\vert r_1^{1+\kappa_w(1)}\cdots r_{n-1}^{1+\kappa_w(n-1)}\vert}.
\end{equation}
where $\kappa_w(\ast)\in \{0,1\}$ depends on the shape of $w$, which in turn determines the admissibility condition. Note that this is completely independent of $\Gamma$.
 
Instead of analyzing the size of the Kloosterman sets $X_{\Gamma}(r^{\ast}w)$ individually we are now going to exploit the additional average. This can be done for example by relating the $r$-sum to the corresponding Kloosterman-set zeta function
\begin{equation}
	Z_{\Gamma}(\chi) = \sum_{\substack{r=(r_1,\ldots,r_{n-1})\in \mathbb{Z}^{n-1}, \\ r_i\mid q^{\infty}}}\frac{X_{\Gamma}(r^{\ast}w)}{\vert r_1^{1+\kappa_w(1)}\cdots r_{n-1}^{1+\kappa_w(n-1)}\vert}\chi(r^{\ast}), \label{more_class}
\end{equation}
where $\chi$ is an unramified (quasi)-character. We end up with 
\begin{equation}
	\sum_{\substack{c=(c_1,\ldots,c_{n-1})\in \mathbb{Z},\\ c_i\ll Z}} \frac{\mathcal{O}_{\rm fin}^{\boldsymbol{\psi}}(c^{\ast}w)}{\vert c_1\cdots c_{n-1}\vert} \ll Z^{r-1+\epsilon}(1+\sup_{\chi\text{ unitary}} \vert Z_{\Gamma}(\chi)\vert).\nonumber
\end{equation}
The remaining task is to estimate the Kloosterman-set zeta function $Z_{\Gamma}(\chi)$ appropriately. The corresponding estimate 
\begin{equation}
	\sup_{\chi \text{ unitary}}\vert Z_{\Gamma}(\chi)\vert \ll [{\rm SL}_n(\mathbb{Z})\colon \Gamma]^{\epsilon-\frac{r-1}{n-1}} \label{eq:des_geo_bound}
\end{equation}
is established in Proposition~\ref{prop:crucial} below an forms the technical heart of this paper.

The restriction to admissible moduli makes it harder to estimate the Kloosterman-set zeta function and one expects slightly better bounds without this restriction. See also Remark~\ref{full_rem} below. However, without restricting to admissible moduli we can not expect to win. We believe that in general the density hypothesis should follow from a trivial estimate for the orbital integrals (i.e. Kloosterman sums) together with a careful analysis of the structure of admissible moduli. Here trivial is to be understood in the sense that we ignore any cancellation that may come from the additive $\boldsymbol{\psi}$ featured in the orbital integrals of Kloosterman type after admissibility is accounted for.

\begin{remark}
Note that the density theorem will essentially follow from the estimates \eqref{eq:expectation_spectral} and \eqref{eq:des_geo_bound}. Both of these are purely local and we expect that they can be established for a large class of lattices. A particularly tractable class of lattices are the parahori-like congruence subgroups $\Gamma_P(q)$, which are the pre-image of $P(\mathbb{Z}/q\mathbb{Z})$ under the reduction map ${\rm SL}_n(\mathbb{Z})\to {\rm SL}_n(\mathbb{Z}/q\mathbb{Z})$, where $P$ is a standard parabolic subgroup.
\end{remark}

\section{Notation}

We now introduce our notation, some of which we already used above. Our conventions are mostly standard. We closely follow \cite{AB, Bl}. Besides this we make use of standard notation from analytic number theory, such as $e(x) =e^{2\pi ix}$, $f\ll g$ and so on. Boldface letters (for example $\mathbf{s}$) or symbols (for example $\boldsymbol{\epsilon}$) will usually denote tuples. Their dimension should be clear from the context.

\subsection{Groups, Roots and Matrix Decompositions}

Throughout we fix $n\in \mathbb{N}$, assume $n\geq 3$ and set $G={\rm GL}_n$. Let $U\subseteq G$ be the subgroup of unipotent upper triangular matrices and let $T\subseteq G$ be the diagonal torus. The standard Borel subgroup is $B=UT$ and the center of $G$ is denoted by $Z$. Further, set $G_0  = {\rm SL}_n$ and similarly $T_0=T\cap G_0$ as well as $B_0=B\cap G_0 = UT_0$. The identity matrix will be denoted by $\mathbf{1}_n$. When the subscript is omitted it stands for a tuple of ones: $\mathbf{1}=(1,\ldots,1)$. This similarity will hopefully not cause any confusion.

Over the real numbers we also write $Z_+\cong \mathbb{R}_{>0}$ for the subgroup of diagonal scalar matrices with positive entries. Further, we can write $T(\mathbb{R}) = V(\mathbb{R})\cdot T(\mathbb{R}_{>0})$, where $V(\mathbb{R})$ is the group of  diagonal matrices with entries in $\pm 1$. We embed $\mathbb{R}_{>0}^{n-1}$ in $T(\mathbb{R}_{>0})$ via 
\begin{equation}
	\iota(y) = {\rm diag}((y_1\cdots y_{n-j})_{1\leq j\leq n}).\label{eq:embed}
\end{equation}
The image of this embedding is denoted by $\tilde{T}(\mathbb{R}_{>0})$. (Recall that the classical upper half space is usually parametrized by $\mathcal{H}=U(\mathbb{R})\tilde{T}(\mathbb{R}_{>0})$.) The inverse $${\rm y}\colon \tilde{T}(\mathbb{R}_{>0})\to \mathbb{R}_{>0}^{n-1}$$ of $\iota$ will be useful later on. With $K_{\infty} = {\rm SO}_n(\mathbb{R})$ we can write the Iwasawa decomposition as
\begin{equation}
	G(\mathbb{R}) = U(\mathbb{R})T(\mathbb{R})K_{\infty} = U(\mathbb{R})T(\mathbb{R}_{>0}){\rm O}_n(\mathbb{R}).\nonumber
\end{equation}

Over the $p$-adic numbers the maximal compact is given by $K_p = {\rm GL}_n(\mathbb{Z}_p)$. The Iwasawa decomposition again reads $G(\mathbb{Q}_p) = U(\mathbb{Q}_p)T(\mathbb{Q}_p)K_p$. We define the local \textit{Borel-type congruence subgroup} $I_p(q)$ by
\begin{equation}
	I_p(q) = \{k\in K_p\colon [k \text{ mod }p^{v_p(q)}]\in B(\mathbb{Z}_p/q\mathbb{Z}_p) \}.\label{eq:def_Iq}
\end{equation}
Thus, if $k\in I_p(q)$, then all entries of $k$ below the diagonal are divisible by $p^{v_p(q)}$. Note that if $q=p$, then $I_p(p)$ is an Iwahori subgroup. 

Globally, we will consider the adele group $G(\mathbb{A})$ with its subgroup $G(\mathbb{A})^1$ consisting of those $g\in G(\mathbb{A})$ with $\vert \det(g)\vert_{\mathbb{A}} = 1$. We write $K_{\rm fin} = \prod_p K_p$ and $K=K_{\infty}\times K_{\rm fin}$. Further, we have the global Borel-type congruence subgroup $I(q) = \prod_p I_p(q)\subseteq K_{\rm fin}$. By strong approximation we have
\begin{equation}
	G(\mathbb{A}) = G(\mathbb{Q})\cdot (Z_+G_0(\mathbb{R})\times I(q)).\nonumber
\end{equation}
Note that 
\begin{equation}
	\Gamma_2(q) = [G_0(\mathbb{R})\times I(q)] \cap G(\mathbb{Q}), \label{eq:intersection}
\end{equation}
where $\Gamma_2(q)$ is the lattice defined in \eqref{lattice} above.

We write $\Delta$ for the roots of $G_0$. Further, let $\Delta_+$ be the subset of positive roots corresponding to our choice of $B_0$. In particular, for $\alpha\in \Delta$, we have one parameter subgroups $x_{\alpha}$ such that $U$ is generated by $x_{\alpha}$ with $\alpha \in \Delta_+$. Let $\Sigma$ be the subset of simple roots. In our case this can be made very explicit. Indeed, the roots are given by $$\Delta = \{ \alpha_{i,j}=e_i-e_j\colon i\neq j\},$$ where $$\alpha_{i,j}({\rm diag}(t_1,\ldots,t_n)) = \frac{t_i}{t_j}.$$ The one parameter group of $\alpha_{i,j}$ (with $i\neq j$) is given by
\begin{equation}
	x_{\alpha_{i,j}}(t) = \mathbf{1}_n + (t\cdot \delta_i(l)\delta_j(k))_{1\leq l,k\leq n}.\nonumber 
\end{equation}
The positive roots are then simply $\Delta_+=\{\alpha_{i,j}\colon i<j\}$ and the simple roots are $\Sigma = \{ \alpha_i = \alpha_{i,i+1}\colon i=1,\ldots, n-1\}$. We put
\begin{equation}
	h_{\alpha_i}(p) = {\rm diag}(1,\ldots ,1,p,p^{-1},1,\ldots,1), \nonumber
\end{equation}
where the $p$ appears at the $i$th position. This can be extended (multiplicatively) to all weights $\alpha = \sum_{i=1}^{n-1}k_i\alpha_i\in \bigoplus_{i=1}^{n-1}\mathbb{Z}\alpha_i$ by
\begin{equation}
	h_{\alpha} (p) = \prod_{i=1}^{n-1} h_{\alpha_i}(p^{k_i}).\nonumber
\end{equation}
Note that the image is precisely $T_0(\mathbb{Q}_p)/T_0(\mathbb{Z}_p)$.

The Weyl group will be denoted by $W$. We identify it with the subgroup of permutation matrices. To each simple root $\alpha_i$ ($i=1,\ldots, n-1$) we associate a simple reflection $s_i\in S_n$ and we abuse notation and write $s_i\in W$ for the corresponding permutation matrix. Explicitly, $s_i$ is nothing but the transposition $(i,i+1)$. Obviously, $W$ is generated by the simple reflections so that every element $w\in W$ can be written as reduced word of the form:
\begin{equation}
	w = s_{i_1}\cdots s_{i_r} \label{eq:red_word}
\end{equation}
The length of $w$ is then defined by $l(w)=r$ with $r$ as in \eqref{eq:red_word}. The longest Weyl element will be denoted by $w_l$.

Further, we define 
\begin{equation}
	U_w = w^{-1}U^{\top}w\cap U \text{ and }\overline{U}_w = w^{-1}Uw\cap U.\nonumber\nonumber
\end{equation}
In particular, we have $U=U_w\cdot \overline{U}_w = \overline{U}_w \cdot U_w$. The Bruhat decomposition reads
\begin{equation}
	G(F) = \bigsqcup_{w\in W} B(F)wU_w(F), \nonumber
\end{equation}
where $F$ is a suitable field. Note that the Weyl group acts on the roots and we set 
\begin{equation}
	R(w) = \{\alpha\in \Delta_+\colon w\alpha\in \Delta_-= \Delta\setminus \Delta_+\}.\nonumber
\end{equation}

Finally, we call the elements 
\begin{equation}
	w_{d_1,\ldots,d_k} = \left(\begin{matrix}  & & & \mathbf{1}_{d_1} \\  &  &\mathbf{1}_{d_2} & \\  & \Ddots &&\\ \mathbf{1}_{d_k} & && \end{matrix}\right) \in W \label{eq:shape_W}
\end{equation}
with $d_1+\ldots+d_k=n$ admissible. Further, given $c\in \mathbb{R}_{>0}^{n-1}$ we set
\begin{equation}
	c^{\ast} = {\rm diag}(\frac{1}{c_{n-1}},\frac{c_{n-1}}{c_{n-2}},\ldots, \frac{c_2}{c_1},c_1)\in T_0(\mathbb{R}_{>0}).\nonumber
\end{equation}
The tuple $(c,w)$ is called admissible if $w=w_{d_1,\ldots,d_k}$ is admissible and $c$ satisfies 
\begin{equation}
	c_{n-i+1}c_{n-i-1} = \pm c_{n-i}^2, \label{eg:admiss}
\end{equation}
for $i\not\in \{d_1,d_1+d_2,\ldots,d_1+\ldots+d_{k-1}\}$ where we set $c_0=c_n=1$. This condition is as in \cite[(4.3)]{AB} and will be relevant for the analysis of orbital integrals (of Kloosterman type).

\subsection{Characters}

Let $v\in \{\infty,2,3,\ldots \}$ be a place of $\mathbb{Q}$. On the torus $T(\mathbb{Q}_v)$ we define the Harish-Chandra function $H_v$ by
\begin{equation}
	H_v(t) = (\log(\vert t_1\vert_v),\ldots,\log(\vert t_n\vert_v))\in \mathbb{R}^{n}.\nonumber
\end{equation}
This is extended to $G(\mathbb{Q}_v)$ via the Iwasawa decomposition by setting $H_v(utk) = H_v(t)$. For $\lambda\in \mathbb{C}^n$ we define $$\Vert g_v\Vert_v^{\lambda} = \exp(\lambda\cdot H_v(g_v))\in \mathbb{C}^{\times}.$$ Globally, we put $\Vert g\Vert_{\mathbb{A}}^{\lambda} = \prod_v \Vert g_v\Vert_v^{\lambda}.$

Next we will introduce unramified characters of the torus $T$ at a finite place $v=p$. Essentially these are unramified characters of $(\mathbb{Q}_p^{\times})^n$ lifted to $T(\mathbb{Q}_p)$ via the isomorphism $T(\mathbb{Q}_p)\cong (\mathbb{Q}_p^{\times})^n$. However, it will be useful to think of them a bit differently. Here we closely follow \cite{Re1}. Let $\widehat{T} = \mathbb{C}^{\times}\otimes X^{\ast}(T)$ be the complex torus dual to $T$. Then we have a pairing $$\langle \, ,\, \rangle \colon T(\mathbb{Q}_p)/T(\mathbb{Z}_p)\times \widehat{T}\to \mathbb{C}^{\times}$$ given by
\begin{equation}
	\langle t,z\otimes \lambda\rangle = z^{v_p(\lambda(t))}.\nonumber
\end{equation}
This identifies $\widehat{T}$ with the group of unramified (quasi)-characters of $T$. The same pairing also identifies $T(\mathbb{Q}_p)/T(\mathbb{Z}_p)$ and the group of rational characters $X^{\ast}(\widehat{T})$ of $\widehat{T}$. Given an unramified character $\chi$ and a positive root $\alpha\in \Delta_+$ (or more generally a weight) we set
\begin{equation}
	\alpha(\chi) = \langle h_{\alpha}(p),\chi\rangle = \chi(h_{\alpha}(p)).\nonumber
\end{equation}
We observe that the following Weyl group actions are compatible:
\begin{equation}
	[w.\chi](t) = \chi(wtw^{-1}) \text{ and } [w.\alpha](\chi) = \alpha(w.\chi).\nonumber
\end{equation}
Later on we will also need the following explicit parametrization of unramified characters. Given $\mathbf{s}\in\mathbb{C}^{n-1}$ we set $$\chi_{\mathbf{s}}(c^{\ast}) = \vert c_1\vert^{s_1}\cdots \vert c_{n-1}\vert^{s_{n-1}}.$$ Note that we have
\begin{equation}
	\alpha_i(\chi_{\mathbf{s}}) = p^{s_{n-i}} \nonumber
\end{equation}
for simple roots $\alpha_i\in\Sigma$.

Finally, we need to introduce the standard (non-degenerate) character on the unipotent group $U$. To do so we first fix an additive $\mathbb{Q}$-invariant character $\psi$ of $\mathbb{A}$. This can be done so that $\psi = \psi_{\infty}\cdot \prod_p\psi_p$, where $\psi_{\infty}(x)=e(x)=e^{2\pi i x}$ is the standard character. We also assume that the $p$-adic characters $\psi_p$ are unramified (i.e. $\psi_p\vert_{\mathbb{Z}_p}\equiv 1 \not\equiv \psi_p\vert_{p^{-1}\mathbb{Z}_p}$). We can lift $\psi_{\mathbb{A}}$ to a $U(\mathbb{Q})$-invariant character $\boldsymbol{\psi}$ of $U(\mathbb{A})$ by setting
\begin{equation}
	\boldsymbol{\psi}(u) = \psi(u_{1,2}+\ldots + u_{n-1,n}).\nonumber
\end{equation}
Obviously, we have the factorization $\boldsymbol{\psi} = \boldsymbol{\psi}_{\infty}\cdot \prod_p\boldsymbol{\psi}_p$, where the local characters are defined analogously.

\subsection{Measures}

Let us start by discussing measures at the archimedean place. First we choose the probability Haar measure on ${\rm O}_n(\mathbb{R})$. The unipotent group $U(\mathbb{R})$ will be equipped with the local Tamagawa measure, which in this case is simply given by $dx=\prod_{1\leq i<j\leq n}dx_{ij}$. Finally we define a measure on $T(\mathbb{R}_{>0})$ as follows. We set
\begin{equation}
	\eta=(\frac{1}{2}j(n-j))_{1\leq j\leq n-1}\in \mathbb{Q}^{n-1} \label{eta}
\end{equation} 
and define the measure 
\begin{equation}
	d^{\ast}y = y^{-2\eta}\frac{dy_1}{y_1}\cdots \frac{d y_{n-1}}{y_{n-1}}.\nonumber
\end{equation}
This measure is pushed forward to a measure on $\tilde{T}(\mathbb{R}_{>0})$, also denoted by $d^{\ast}y$, via the embedding $\iota$ defined in \eqref{eq:embed}. The isomorphism $Z_+\cong \mathbb{R}_{>0}$ allows us to lift the measure $\frac{dz}{z}$ from $\mathbb{R}_{>0}$ to $Z_+$. The measure on $T(\mathbb{R}_{>0})$ is now defined via the decomposition $T(\mathbb{R}_{>0}) = Z_+\cdot \tilde{T}(\mathbb{R}_{>0})$. Finally, the Haar measure on $G(\mathbb{R})$ can be described using the Iwasawa decomposition by
\begin{equation}
	\int_{G(\mathbb{R})}f(g)dg = \int_{U(\mathbb{R})}\int_{T(\mathbb{R}_{>0})}\int_{{\rm O}_n(\mathbb{R})}f(xtk)dkdtdx.\nonumber
\end{equation}
Note that the usual measure $dxd^{\ast}y$ on $\mathcal{H}$ is recovered using the identification $\mathcal{H}=G(\mathbb{R})/Z_+{\rm O}_n(\mathbb{R})$. 

At a finite places $v=p$ we equip $K_p$ with the probability Haar measure and $U(\mathbb{Q}_p)$ with the Tamagawa measure.  Note that this implies that $U(\mathbb{Z}_p)$ has volume $1$. The torus $T(\mathbb{Q}_p)$ is equipped with the Haar measure $dt$ normalized by ${\rm Vol}(T(\mathbb{Z}_p),dt)=1$. The Haar measure on $G(\mathbb{Q}_p)$ can be given in Iwasawa coordinates by
\begin{equation}
	\int_{G(\mathbb{Q}_p)}f(g)dg = \int_{U(\mathbb{Q}_p)}\int_{T(\mathbb{Q}_{p})}\int_{K_p}f(xtk)\delta_B(t)dkdtdx.\nonumber
\end{equation}

On the (restricted) products $K$, $K_{\rm fin}$, $U(\mathbb{A})$ and $T(\mathbb{A})$ we put the product measures corresponding to the local measures defined above. Note that for the global group $G(\mathbb{A})$ we use the Tamagawa measure denoted by $dg$. As discussed in \cite[p. 26]{LM} it turns out that $${\rm Vol}(G(\mathbb{Q})\backslash G(\mathbb{A})^1,dg)=1.$$ Note that if we denote the product measure on $G(\mathbb{A})$ obtained from the local measures by $d_{\rm pr}g$, then we must have $C_n\cdot dg= d_{\rm pr}g.$ This constant is easily computed as follows. We use the identification $G(\mathbb{A})^1 = G(\mathbb{R})/Z_+\times {\prod_p}'G(\mathbb{Q}_p)$ and strong approximation to compute 
\begin{equation}
	C_n = {\rm  Vol}(G(\mathbb{Q})\backslash G(\mathbb{A})^1,d_{\rm pr}g)= {\rm Vol}(G_0(\mathbb{Z})\backslash \mathcal{H},dxd^{\ast}y) = \frac{1}{n}\prod_{l=2}^n\frac{\Gamma(\frac{l}{2})\zeta(l)}{\pi^{\frac{l}{2}}}.\label{def_Cn}
\end{equation}

Finally, observe that $[K_{\rm fin}\colon I(q)] = [G_0(\mathbb{Z})\colon \Gamma_2(q)] \asymp q^{\frac{n(n-1)}{2}}.$ We define $$\mathcal{V}_q = q^{\frac{n(n-1)}{2}}.$$

\subsection{Automorphic Forms and Representations}

We define the quotient 
\begin{equation}
	X_q = G(\mathbb{Q})\backslash G(\mathbb{A})^1/K_{\infty}I(q) \cong \Gamma_2(q)\backslash \mathcal{H}. \label{eq:def_Xq}
\end{equation}
and consider the corresponding $L^2$-space $L^2(X_q)$ equipped with the inner product
\begin{equation}
	\langle F,G\rangle = \int_{G(\mathbb{Q})\backslash G(\mathbb{A})^1}F(g)\overline{G(g)} dg.\nonumber
\end{equation}
The spectrum of $L^2(X_q)$ decomposes into cuspidal, residual and continuous part. In this note we focus mostly on the cuspidal bit denoted by $L^2_{\rm cusp}(X_q)$. We have the spectral decomposition
\begin{equation}
	L^2_{\rm cusp}(X_q) = \widehat{\bigoplus}_{\pi}V_{\pi},\nonumber
\end{equation}
where the sum runs over cuspidal automorphic representations and $V_{\pi}$ denotes the (finite dimensional) subspace of $K_{\infty}I(q)$-fixed elements. We write $\pi\mid X_q$ if $\pi$ is cuspidal and $V_{\pi}\neq \{0\}$. Furthermore, let $\mathcal{B}_{q}(\pi)$ be an orthogonal basis of $V_{\pi}$.

Due to Flath's theorem each cuspidal automorphic representation $\pi$ has a factorization $$\pi \cong \pi_{\infty}\otimes {\bigotimes_p}'\pi_p.$$ We write $\pi_p^{I_p(q)}$ for the subspace of $I_p(q)$ invariant elements in $\pi_p$. Note that if $p\nmid q$ this subspace is spanned by the unique (up to scaling) spherical element. We let $\mathcal{B}_q(\pi_p)$ be an orthogonal basis of $\pi_p^{I_p(q)}$. Without loss of generality we assume that $\mathcal{B}_q(\pi)$ consists of factorisable elements.

We write $\mu_{\infty}(\pi)$ for the local archimedean Langlands parameter (i.e. the Langlands parameter of $\pi_{\infty}$). For our simplistic purposes it suffices to view $\mu_{\infty}(\pi)=(\mu_1,\ldots,\mu_n)\in \mathbb{C}^n$ as an $n$-tuple of complex numbers such that $$\mu_1+\ldots+\mu_n = 0 \text{ and }\{\mu_1,\ldots,\mu_n\} = \{-\overline{\mu_1},\ldots,-\overline{\mu_n}\}.$$ Note that $\pi$ is tempered at infinity precisely when $\mu_{\infty}(\pi)\in (i\mathbb{R})^n$. It should be noted that classically the tuple $\mu_{\infty}(\pi)$ is also called the spectral parameter as it is closely connected to the eigenvalues of invariant differential operators. For comparison we note that the constant function, which is of course not a cusp form, has spectral parameter $(\frac{n-1}{2},\frac{n-3}{2},\ldots,\frac{3-n}{2},\frac{1-n}{2})$. We define
\begin{equation}
	\sigma_{\infty}(\pi) = \max_{i=1,\ldots,n} \vert \Re(\mu_i)\vert.\nonumber
\end{equation}
We are now ready to define the counting function
\begin{equation}
	N_{\infty}(\sigma;q,M) = \sharp\{ \pi\mid X_q\colon \Vert\mu_{\infty}(\pi)\Vert\leq M \text{ and }\sigma_{\infty}(\pi)\geq \sigma  \}.\nonumber
\end{equation}
This relates to the classical counting function $N_{\Gamma_2(q)}(\sigma;M)$ defined in the introduction as follows. First, a standard adelisation procedure shows that
\begin{equation}
	N_{\Gamma_2(q)}(\sigma;M) = \sum_{\substack{\pi \mid X_q,\\ \Vert \mu_{\infty}(\pi)\Vert \leq M,\\ \sigma_{\infty}(\pi)\geq \sigma}} \dim_{\mathbb{C}}(V_{\pi}).\nonumber
\end{equation}
Second, a local computation, see Lemma~\ref{5.1} below, shows that $\dim_{\mathbb{C}}(V_{\pi})\ll_n 1$. Thus we have
\begin{equation}
	N_{\Gamma_2(q)}(\sigma;M) \asymp_n N_{\infty}(\sigma;q,M).\nonumber
\end{equation}
For the rest of this note we will only be concerned with the counting function $N_{\infty}(\sigma;q,M)$.

\section{The Iwahori-Hecke Algebra and Unramified Principal Series}

The goal of this section to recall some results from the theory of Iwahori-Hecke algebras that will be needed later on. Doing so we will mostly follow the exposition from \cite{Re1, Re2}. We will work exclusively over $\mathbb{Q}_l$ and $\mathbb{Z}_l$ for a fixed prime $l$. This suffices for the goals of this note, but the results are of course true for general non-archimedean local fields $F$ with valuation ring $\mathcal{O}$. 

Note that $I_l(l)$ is the Iwahori subgroup determined by our choice of positive roots. To shorten notation we will set $I_0=I_l(l)\subseteq K_l$ throughout this section.

For two functions $S,T\in \mathcal{C}_c^{\infty}(G(\mathbb{Q}_l))$ the convolution is defined as usual by
\begin{equation}
	[T\ast S](g)=\int_{G(\mathbb{Q}_l)}T(gh^{-1})S(h)dh.\nonumber
\end{equation}
We also set $S^{\vee}(g) = \overline{S(g^{-1})}$. 

\begin{remark}
Suppose $\pi$ is a cuspidal automorphic representation, $\phi\in \pi$ and $T\in \mathcal{C}_c^{\infty}(G(\mathbb{Q}_l))$, then we write $$T\phi = \int_{G(\mathbb{Q}_l)}T(g_l)\pi(g_l)\phi dg_l,$$ where $G(\mathbb{Q}_l) \subseteq G(\mathbb{A})$ in the obvious way.
\end{remark}

The Iwahori-Hecke algebra is defined as
\begin{equation}
	\mathcal{H}_I = \mathcal{C}_c^{\infty}(I_0\backslash G(\mathbb{Q}_l)/I_0)
\end{equation}
Due to the Iwahori decomposition
\begin{equation}
	G(\mathbb{Q}_l)=\bigsqcup_{w\in W} B(\mathbb{Q}_l) w I_0\nonumber
\end{equation}
a linear basis of $\mathcal{H}_I$ is given by
\begin{equation}
	T_{aw}=[K_l\colon I_0]\cdot \mathbbm{1}_{I_0awI_0} \nonumber
\end{equation}
for $a\in T(\mathbb{Q}_l)/T(\mathbb{Z}_l)$ and $w\in W$.\footnote{The multiplication with $[K_l\colon I_0]$ is included to be compatible with our main references \cite{Re1, Re2}. Indeed, there the Haar measure is normalized to give volume $1$ to $I_0$, but in accordance to the rest of this manuscript our Haar measure satisfies ${\rm Vol}(K_l) = 1$. To make up for this we chose to include the corresponding normalizing factor here.}

The subalgebra consisting of functions supported on $K_l$ will be denoted by $\mathcal{H}_W$. This algebra is finite dimensional and a convenient linear basis is given by
\begin{equation}
	\{ T_w\colon w\in W\}.\nonumber
\end{equation}

\begin{remark}
The full Iwahori-Hecke algebra can be written as a tensor product $\mathcal{H}_I=\mathcal{H}_W\otimes \Theta$ where $\Theta$ is commutative (and isomorphic to the group algebra $\mathbb{C}[T(\mathbb{Q}_l)/T(\mathbb{Z}_l)]$). Since for our purposes the algebra $\Theta$ is mostly irrelevant we will not discuss it any further here. For more details we refer to \cite{Re1} and the references within.
\end{remark}

Let $\chi$ be an unramified character of $T(\mathbb{Q}_l)$ then we define the induced representation
\begin{equation}
	I(\chi) = {\rm Ind}_{B(\mathbb{Q}_l)}^{G(\mathbb{Q}_l)}(\chi) \nonumber
\end{equation}
consisting of smooth (i.e. locally constant) functions $f\colon G(\mathbb{Q}_l)\to \mathbb{C}$ such that $f(bg) = [\chi\cdot \delta^{\frac{1}{2}}](b)f(g)$ for all $b\in B(\mathbb{Q}_l)$ and all $g\in G(\mathbb{Q}_l)$. We write $I(\chi)^{I_0}$ for the subspace of Iwahori-fixed elements. The functions
\begin{equation}
	\phi_w^{\chi}(g) = \begin{cases}
			[\chi\cdot\delta^{\frac{1}{2}}](a) &\text{ if }g=auwk \text{ for }a\in T(\mathbb{Q}_l),\, u\in U(\mathbb{Q}_l) \text{ and }k\in I_0,\\
			0 &\text{ else.}
	\end{cases}
\end{equation}
form the so called standard basis of $I(\chi)^{I_0}$.

Given $f\in \mathcal{H}_I$ the corresponding convolution operator 
\begin{equation}
	R(f)\colon I(\chi) \to I(\chi),\,  h\mapsto \left[x  \mapsto \int_{G(\mathbb{Q}_l)}h(xg)f(g)dg\right]
\end{equation} 
preserves the subspace $I(\chi)^{I_0}$. Thus $I(\chi)^{I_0}$ is a $\mathcal{H}_I$-module. We will abuse notation and write $T_w\phi = R(T_w)\phi$ for $\phi\in I(\chi)^{I_0}$ and $T_w\in \mathcal{H}_W$ as defined above. Note that the action of $\mathcal{H}_W$ on $I(\chi)^{I_0}$ is independent of $\chi$. The action can be described quite explicitly as follows:
\begin{itemize}
	\item For a simple reflection $s\in W$ we have
	\begin{equation}
		T_s\phi_w^{\chi} = \begin{cases}
			\phi_{ws}^{\chi} &\text{ if }ws>w,\\
			l\phi_{ws}^{\chi}+(l-1)\phi_w^{\chi} &\text{ if }ws<w.
		\end{cases} \nonumber
	\end{equation}
	\item We have $T_{w_1w_2} = T_{w_1}T_{w_2}$ if $l(w_1w_2) = l(w_1)+l(w_2)$ and $T_1$ is the identity operator.
\end{itemize}

We conclude this section by introducing the intertwining operators $\mathcal{A}_w^{\chi}\colon I(\chi) \to I(w.\chi)$. These operators are defined by the integral
\begin{equation}	
	[\mathcal{A}_w^{\chi}(f)](x) = \int_{U_w(\mathbb{Q}_l)}f(wux)du
\end{equation}
when $\chi$ is sufficiently dominant and understood by analytic continuation otherwise. Their properties are similar to those of the operators $T_w$. Indeed, by \cite[(3.4)]{Ca} or \cite[pp. 321-322]{Re1} we have
\begin{itemize}
	\item $\mathcal{A}_{w_1w_2}^{\chi} = \mathcal{A}_{w_2}^{w_1.\chi}\mathcal{A}_{w_1}^{\chi}$ for $l(w_1w_2)=l(w_1)+l(w_2)$ and
	\item For a simple root reflection $s\in W$ associated to $\alpha\in \Sigma$ (i.e. $l(s)=1$) we have 
	\begin{equation}
		\mathcal{A}_s^{\chi}\phi_w^{\chi} = \begin{cases}
			\alpha(\chi)\cdot c_{\alpha}(\chi)\phi_{w}^{s.\chi}+l^{-1}\phi_{sw}^{s.\chi} &\text{ if }sw>w,\\
			c_{\alpha}(\chi)\phi_w^{s.\chi}+\phi_{sw}^{s.\chi} &\text{ if }sw<w, 
		\end{cases}\label{rec_intertwiners}
	\end{equation}
	for $c_{\alpha}(\chi) = \frac{1-l^{-1}}{1-\alpha(\chi)}$.
\end{itemize}

The action of the intertwining operators on the spherical vector $$\phi_+^{\chi} = \sum_{w\in W} \phi_w^{\chi}$$ is well understood. It is given by a version of the Gindikin-Karpelevich formula due to Langlands \cite{Lan}. Indeed, one as
\begin{equation}
	\mathcal{A}_w^{\chi}\phi_+^{\chi} = C_w(\chi)\phi_+^{w.\chi}\label{eq:intertwiner_eig_sph}
\end{equation}
for
\begin{equation}
	C_w(\chi)= \prod_{\alpha\in R(w^{-1})} \frac{1-l^{-1}\alpha(\chi)}{1-\alpha(\chi)}. \nonumber
\end{equation}
See also \cite{Ca} or \cite[p. 19]{Re2}.

\begin{remark}
In general, it is quite difficult to compute the expression $\mathcal{A}_{w_1}^{\chi}\phi_{w_2}^{\chi}$ explicitly. Of course this a purely combinatorial problem, but it has many interesting ramifications, one being Casselman's basis problem. We refer to \cite{BM1, BM2, AMSS} for extensive treatment of the latter.
\end{remark}

\section{On the Bessel Distribution of Iwahori-Spherical Representations}

Bessel distributions (at the finite place $l$) will appear on the spectral side of our pre-Kuznetsov formula. After briefly introducing them we will bound them from below when evaluated at a suitable test-function $T\in \mathcal{H}_W$. 

Throughout this section we work locally at the place $l$ (i.e. over the field $\mathbb{Q}_l$). To simplify notation we will thus omit the subscript $l$. Let $\pi$ be an irreducible smooth admissible unitary representation of $\textrm{GL}_n(\mathbb{Q}_l)$ with Iwahori-fixed vectors. Later on we will further assume that $\pi$ is generic. We start with the following lemma which characterizes representations with Iwahori-fixed vectors.

\begin{lemma}\label{5.1}
Let $\pi$ be an irreducible smooth admissible unitary representation with Iwahori fixed vectors (i.e. $\pi^{I_0} \neq\{0\}$). Then $\pi$ can be realized as the unique irreducible sub-representation of an unramified principal series $I(\chi)$. We have
\begin{equation}
	\dim_{\mathbb{C}}\pi^{I_0} \ll_n 1.\nonumber
\end{equation}
If $\pi$ is generic, then $$\pi\cong {\rm Ind}_{P(\mathbb{Q}_l)}^{G(\mathbb{Q}_l)}(\vert \cdot\vert^{ir_1}\cdot{\rm St}_{n_i}\otimes \ldots\otimes \vert \cdot\vert^{ir_s}\cdot{\rm St}_{n_s}),$$ where $P$ is the standard parabolic subgroup of $G$ associated to the partition $n=n_1+\ldots+n_s$, $r_1,\ldots,r_s\in \mathbb{R}$ and ${\rm St}_{n_i}$ denotes the Steinberg representation of ${\rm GL}_{n_i}(\mathbb{Q}_l)$.
\end{lemma}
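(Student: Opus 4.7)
The plan is to combine Borel's theorem on Iwahori-fixed vectors with the Bernstein--Zelevinsky classification of irreducible admissible representations of ${\rm GL}_n(\mathbb{Q}_l)$. For the first assertion, I would invoke Borel's theorem: an irreducible smooth admissible $\pi_l$ with $\pi_l^{I_0}\neq 0$ occurs as a subquotient of some unramified principal series $I(\chi)$, and its cuspidal support (a $W$-orbit of unramified characters) is uniquely determined. By the Langlands classification adapted to the Iwahori-spherical setting, one can choose a distinguished representative $\chi$ inside the $W$-orbit, for instance one whose exponents lie in the antidominant chamber, so that $\pi_l$ appears as the unique irreducible subrepresentation of $I(\chi)$.

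For the dimension bound I would use the standard basis $\{\phi_w^{\chi}\}_{w\in W}$ of $I(\chi)^{I_0}$ recorded in Section 4, which gives $\dim_{\mathbb{C}}I(\chi)^{I_0}=|W|=n!$. Combined with the embedding $\pi_l\hookrightarrow I(\chi)$ this yields $\dim_{\mathbb{C}}\pi_l^{I_0}\leq n!$, which depends only on $n$.

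For the genericity assertion I would invoke Zelevinsky's classification: irreducible admissible representations of ${\rm GL}_n(\mathbb{Q}_l)$ are in bijection with multisegments $\{\Delta_1,\ldots,\Delta_s\}$ whose underlying cuspidal data are twists of irreducible supercuspidals. Iwahori-sphericity forces every segment to be supported in unramified characters of ${\rm GL}_1$, so each $\Delta_i$ takes the form $\{\vert\cdot\vert^{r_i},\vert\cdot\vert^{r_i+1},\ldots,\vert\cdot\vert^{r_i+n_i-1}\}$ on some ${\rm GL}_{n_i}$. Genericity is characterised by the segments being pairwise unlinked, and under this hypothesis the parabolic induction ${\rm Ind}_{P(\mathbb{Q}_l)}^{G(\mathbb{Q}_l)}(\vert\cdot\vert^{r_1}{\rm St}_{n_1}\otimes\cdots\otimes\vert\cdot\vert^{r_s}{\rm St}_{n_s})$ is already irreducible and generic, and each segment contributes its Steinberg-type (Whittaker) constituent, which is exactly the twisted Steinberg $\vert\cdot\vert^{r_i}{\rm St}_{n_i}$. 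Identifying this induction with $\pi_l$ via uniqueness of cuspidal support then completes the argument.

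\textbf{The main obstacle:} The non-trivial work is in the generic case: one must correctly extract the Steinberg-type constituent of each segment (rather than its Bernstein--Zelevinsky dual, the trivial-type subquotient), and verify that the unlinking condition forces the parabolic induction to be irreducible. Both facts are consequences of the Bernstein--Zelevinsky theory of linked segments, but they must be quoted with care; everything else reduces to either counting Iwahori double cosets or invoking Borel's theorem.
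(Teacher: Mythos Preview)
Your proposal is correct and follows essentially the same route as the paper. The only minor difference is that the paper cites Casselman's embedding result \cite[Proposition~2.6]{Ca} directly to produce a $G(\mathbb{Q}_l)$-embedding $\pi_l\hookrightarrow I(\chi)$ (bypassing your Borel subquotient plus Langlands step), and then simply invokes \cite[Theorem~9.7]{Ze} for the generic classification rather than spelling out the multisegment argument; your version is slightly more detailed on the uniqueness of the irreducible subrepresentation, which the paper's proof does not actually address.
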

\begin{proof}
First, by \cite[Proposition~2.6]{Ca}, there is some unramified principal series representation $I(\chi)$ and a $G(\mathbb{Q}_l)$-embedding of $\pi$ into $I(\chi)$. Since $\dim_{\mathbb{C}}I(\chi)^{I_0} = \sharp W = n!$ we must have $\dim_{\mathbb{C}} \pi_l^{I_0}\leq n!$.

Finally if $\pi$ is generic, the claim follows from the classification result \cite[Theorem~9.7]{Ze}.
\end{proof}

From now on we assume that $\pi$ is unitary and generic. We fix an intertwiner $v\mapsto W_v$ from $\pi$ to its $\boldsymbol{\psi}$-Whittaker model ${\rm Wh}(\pi)$. This amounts to a choice of Whittaker functional. Further, the inner product on ${\rm Wh}(\pi)$ is
\begin{equation}
	\langle W_1,W_2\rangle_{{\rm Wh}(\pi)} = \int_{U^{(n-1)}(\mathbb{Q}_l)\backslash G^{(n-1)}(\mathbb{Q}_l)}W_1\left(\left(\begin{matrix} h& 0\\0&1\end{matrix}\right)\right)\cdot\overline{W_2\left(\left(\begin{matrix} h &0\\0&1\end{matrix}\right)\right)}dh, \nonumber 
\end{equation} 
for $W_1,W_2\in {\rm Wh}(\pi)$. Here $G^{(n-1)}=\textrm{GL}_{n-1}$ and $U^{n-1}\subseteq G^{(n-1)}$ is the standard unipotent radical. Define  
\begin{equation}
	J(W)	= \int_{U(\mathbb{Q}_l)}^{\rm st} \langle \pi(x)W,W\rangle_{{\rm Wh}(\pi)} \boldsymbol{\psi}(x)^{-1}dx, \label{def_Il}
\end{equation}
for $W\in{\rm Wh}(\pi)$, 

We now introduce the Bessel distribution. To do so we fix an orthogonal basis $\mathcal{B}(\pi)$. We recall that $\mathcal{B}_l(\pi)$ denotes a basis of $\pi^{I_0}$ and arrange that $$\mathcal{B}_l(\pi)\subseteq \mathcal{B}(\pi).$$ The Bessel distribution is then defined by
\begin{equation}
	J_{\pi}(F) = \sum_{v\in \mathcal{B}(\pi)}\frac{W_{\pi(F)v}(1)\overline{W_v(1)}}{\langle W_v,W_v\rangle_{{\rm Wh}(\pi)}}.\label{eq:bessel_def}
\end{equation}
For more details we refer to \cite{BaM}. We can relate the summands in the definition of the Bessel distribution to the integrals $J(\cdot)$:

\begin{lemma}\label{recognice_bessel}
For $T\in \mathcal{H}_W$ we have
\begin{equation}
	\sum_{v\in \mathcal{B}_l(\pi)}\frac{J(W_{Tv})}{\langle W_v,W_v\rangle_{{\rm Wh}(\pi)}} = J_{\pi}(T^{\vee}\ast T).
\end{equation}
\end{lemma}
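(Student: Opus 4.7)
The strategy is to rewrite both sides of the identity as traces on the finite-dimensional Iwahori-fixed subspace $V = \pi_l^{I_l(l)}$ and match them via cyclicity of the trace together with a rank-one description of the distributional Whittaker projector
\[
\Phi_{\boldsymbol{\psi}_l} := \int^{\mathrm{st}}_{U(\mathbb{Q}_l)} \pi_l(x)\,\boldsymbol{\psi}_l(x)^{-1}\,dx.
\]
The key input is the trace interpretation of the Bessel distribution itself: writing $\Lambda(v)=W_v(1)$ for the Whittaker functional and $\widetilde{\Lambda}$ for its (formal) Whittaker vector, so that heuristically $\Phi_{\boldsymbol{\psi}_l} v = \Lambda(v)\widetilde{\Lambda}$, a direct Parseval manipulation applied to \eqref{eq:bessel_def} gives
\[
J_{\pi_l}(F) = \mathrm{tr}\bigl(\Phi_{\boldsymbol{\psi}_l}\,\pi_l(F)\bigr),
\]
computed in any orthogonal basis of $\pi_l$.

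For the left-hand side, I would substitute the definition of $I_l(W_{Tv})$, interchange the finite sum over $\mathcal{B}_l(\pi_l)$ with the stable integral, and move one of the $T$ operators across the Whittaker-model inner product by the adjoint identity $\pi_l(T)^* = \pi_l(T^\vee)$, which is immediate from $S^\vee(g)=\overline{S(g^{-1})}$ and unitarity. This converts
\[
\langle \pi_l(x) W_{Tv}, W_{Tv}\rangle_{\mathrm{Wh}(\pi_l)} = \langle \pi_l(T^\vee)\pi_l(x)\pi_l(T) W_v, W_v\rangle_{\mathrm{Wh}(\pi_l)},
\]
and the weighted sum $\sum_v \langle W_v,W_v\rangle^{-1}\langle\cdot,W_v\rangle$ is then recognised as the trace of $\pi_l(T^\vee)\pi_l(x)\pi_l(T)$. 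This trace is finite and unambiguous: since $T\in\mathcal{H}_W$ is Iwahori bi-invariant, $\pi_l(T)$ maps every vector into $V$ and (applying the adjoint relation to $R(T^\vee)w\in V$) vanishes on $V^\perp$. Hence the operator has rank at most $\dim V\ll_n 1$ by Lemma~\ref{5.1}, and the restricted sum over $\mathcal{B}_l(\pi_l)$ already realises the full Hilbert-space trace.

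Pulling the stable integral inside the finite-rank trace and applying cyclicity yields
\[
\mathrm{LHS}=\mathrm{tr}\bigl(\pi_l(T^\vee)\,\Phi_{\boldsymbol{\psi}_l}\,\pi_l(T)\bigr)=\mathrm{tr}\bigl(\Phi_{\boldsymbol{\psi}_l}\,\pi_l(T^\vee\ast T)\bigr)=J_{\pi_l}(T^\vee\ast T),
\]
using $\pi_l(T_1)\pi_l(T_2)=\pi_l(T_1\ast T_2)$ in the convolution convention of the paper, and the trace formula from the first paragraph.

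The main obstacle is the rigorous treatment of the formal Whittaker vector $\widetilde{\Lambda}$ and the interchange of the stable integral with the trace. Both issues are controlled by the finite rank of $\pi_l(T)$: the projector $\Phi_{\boldsymbol{\psi}_l}$ only needs to be defined as a partial operator with image inside the finite-dimensional $V$, and on this subspace the relevant stable integrals are precisely the $I_l$-integrals appearing in \eqref{def_Il}, whose convergence (in the stable sense) is guaranteed for generic unitary representations of $G(\mathbb{Q}_l)$; see \cite{BaM} for this standard piece of Bessel-distribution theory.
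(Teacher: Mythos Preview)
Your argument is sound and takes a genuinely different route from the paper's. The paper's proof is essentially a two-line reduction to existing literature: it first restricts the defining sum for $J_{\pi_l}(T^\vee\ast T)$ from $\mathcal{B}(\pi_l)$ to $\mathcal{B}_l(\pi_l)$ (using that $T^\vee\ast T$ is bi-$I_l(l)$-invariant), then rewrites $W_{\pi_l(T^\vee\ast T)v}(1)\,\overline{W_v(1)}$ as $|W_{\pi_l(T)v}(1)|^2$ by \cite[Lemma~2.2]{Ja}, and finally invokes \cite[Lemma~4.4]{LM}, which is precisely the identity $I_l(W)=|W(1)|^2$. The paper therefore never introduces a formal Whittaker vector $\widetilde{\Lambda}$ or the projector $\Phi_{\boldsymbol{\psi}_l}$; it pivots directly on the Lapid--Mao identity. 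Your trace-cyclicity approach is more conceptual and makes the positivity structure of $T^\vee\ast T$ transparent, at the cost of having to justify the formal manipulations with $\Phi_{\boldsymbol{\psi}_l}$ (which you handle reasonably via finite rank). One bookkeeping point to watch: with the paper's conventions $[T\ast S](g)=\int T(gh^{-1})S(h)\,dh$ and $\pi_l(F)=\int F(g)\pi_l(g)\,dg$ one has $\pi_l(T_1)\pi_l(T_2)=\pi_l(T_1\ast T_2)$, so your cyclicity step $\mathrm{tr}\bigl(\pi_l(T^\vee)\Phi_{\boldsymbol{\psi}_l}\pi_l(T)\bigr)=\mathrm{tr}\bigl(\Phi_{\boldsymbol{\psi}_l}\pi_l(T)\pi_l(T^\vee)\bigr)$ actually lands on $J_{\pi_l}(T\ast T^\vee)$ rather than $J_{\pi_l}(T^\vee\ast T)$. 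This is immaterial in the only application (where $T=T_1=T_1^\vee$), but the order should be tracked.
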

\begin{proof}
We first note that, since both $T$ and $T^{\vee}$ are bi-$I_0$-invariant, we can write
\begin{equation}
	J_{\pi}(T^{\vee}\ast T) = \sum_{v\in \mathcal{B}_l(\pi)}\frac{W_{\pi(T^{\vee}\ast T)v}(1)\overline{W_v(1)}}{\langle W_v,W_v\rangle_{{\rm Wh}(\pi)}}
\end{equation}
As in \cite[Lemma~2.2]{Ja} we further see that 
\begin{equation}
	J_{\pi}(T^{\vee}\ast T) = \sum_{v\in \mathcal{B}_l(\pi)}\frac{W_{\pi(T)v}(1)\overline{W_{\pi(T)v}(1)}}{\langle W_v,W_v\rangle_{{\rm Wh}(\pi)}}
\end{equation}
We conclude by applying \cite[Lemma~4.4]{LM}.
\end{proof}

This allows us to derive the following useful bound.

\begin{lemma}\label{low_bound_bessel}
Let $\pi$ be an irreducible smooth admissible unitary generic representation with $\pi^{I_0}\neq \{0\}$. Then we have $J_{\pi}(T_1) \gg 1$.
\end{lemma}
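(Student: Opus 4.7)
My plan is to evaluate $J_{\pi_l}(T_1)$ directly as an explicit sum of squares and then extract the uniform lower bound using the classification in Lemma~\ref{5.1}.

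\textbf{Direct evaluation.} Since $T_1=[K_l:I_0]\mathbbm{1}_{I_0}$ and ${\rm Vol}(I_0)=[K_l:I_0]^{-1}$, the operator $\pi_l(T_1)=\int T_1(g)\pi_l(g)\,dg$ is the orthogonal projection of $\pi_l$ onto $\pi_l^{I_l(l)}$ with respect to the Whittaker inner product. Because $\mathcal{B}(\pi_l)$ is orthogonal with $\mathcal{B}_l(\pi_l)\subseteq \mathcal{B}(\pi_l)$, every $v\in \mathcal{B}(\pi_l)\setminus \mathcal{B}_l(\pi_l)$ is Whittaker-orthogonal to $\pi_l^{I_l(l)}$, so $\pi_l(T_1)v=0$ for such $v$, while $\pi_l(T_1)v=v$ for $v\in \mathcal{B}_l(\pi_l)$. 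Substituting this into the definition \eqref{eq:bessel_def} collapses the Bessel distribution to
\begin{equation}
J_{\pi_l}(T_1)=\sum_{v\in \mathcal{B}_l(\pi_l)}\frac{|W_v(1)|^2}{\langle W_v,W_v\rangle_{{\rm Wh}(\pi_l)}}, \nonumber
\end{equation}
a finite sum of non-negative terms. Equivalently, this is the squared norm of the evaluation-at-$1$ functional $v\mapsto W_v(1)$ on the finite-dimensional Hilbert space $(\pi_l^{I_l(l)},\langle\cdot,\cdot\rangle_{{\rm Wh}})$, and it remains only to bound this norm from below uniformly in $\pi_l$.

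\textbf{Uniform lower bound via classification.} It suffices to exhibit one $v_0\in \mathcal{B}_l(\pi_l)$ whose ratio $|W_{v_0}(1)|^2/\|W_{v_0}\|_{{\rm Wh}}^2$ is $\gg_n 1$. Lemma~\ref{5.1} identifies $\pi_l$ as a full parabolic induction $\pi_l\cong {\rm Ind}_P^G(\otimes_i|\cdot|^{r_i}{\rm St}_{n_i})$ with exponents $r_i$ confined to a bounded interval by unitarity. I would take $v_0$ to be the Iwahori-fixed vector induced from the Iwahori new vector of each Steinberg factor (each of which is one-dimensional). Both quantities then factorise through the Levi: the Jacquet integral $W_{v_0}(1)$ reduces, via Schiffmann's/Shahidi's formula, to an explicit rational function of $(l^{r_i})$, and $\|W_{v_0}\|_{\rm Wh}^2$ admits a similar closed form via a Rankin--Selberg type integral. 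The resulting expression is bounded below uniformly on the compact region of allowed $(r_i)$.

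\textbf{Main obstacle.} The technical heart is making these factorisations rigorous and checking that no degeneration occurs at the boundary of the unitary range for the $r_i$'s. This reduces to non-vanishing properties of the Whittaker integrals and norms for each individual ${\rm St}_{n_i}$, combined with careful tracking of the factorisation through $P$. A cleaner alternative I would pursue in parallel is to work directly in an ambient unramified principal series $I(\chi)\supseteq \pi_l$ and compute the restriction of the evaluation functional to $\pi_l^{I_l(l)}$ in the basis $\{\phi_w^\chi\}$ of $I(\chi)^{I_0}$, using the recursion \eqref{rec_intertwiners} for the intertwiners; combined with $\dim \pi_l^{I_l(l)}\leq n!$, a compactness argument on the bounded parameter space then forces the evaluation functional to have squared norm $\gg_n 1$.
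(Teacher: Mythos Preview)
Your direct evaluation of $J_{\pi_l}(T_1)$ as the squared norm of the evaluation functional on $\pi_l^{I_l(l)}$ is correct; this is essentially Lemma~\ref{recognice_bessel} applied with $T=T_1$ (note $T_1^{\vee}\ast T_1=T_1$) combined with the identity $I_l(W_v)=|W_v(1)|^2$ from \cite[Lemma~4.4]{LM}. Your outline for the lower bound --- use the classification of Lemma~\ref{5.1}, pick a distinguished Iwahori vector in the induced representation, and factor through the Levi --- is also the paper's route.

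Where the proposal stalls is precisely at what you flag as the ``main obstacle'': neither of your two suggested finishes actually closes the gap. For the first, a compactness argument on the parameter space of the $r_i$ only shows that the infimum of $|W_{v_0}(1)|^2/\|W_{v_0}\|_{\rm Wh}^2$ is attained; it does not prevent that infimum from being zero, and the non-vanishing check you defer \emph{is} the content of the lemma. For the alternative via the ambient principal series, the same objection applies: compactness on the unramified parameter $\chi$ together with $\dim \pi_l^{I_l(l)}\leq n!$ gives nothing without a separate proof that the evaluation functional is nowhere zero on the Iwahori subspace, uniformly as the induced representation degenerates. In both cases the computation has been replaced by an existence claim that is no easier.

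The paper avoids compactness entirely by making the computation explicit. It first checks $J_{{\rm St}_n}(T_1)=1$ directly (the Iwahori subspace is one-dimensional and the matrix coefficient is known from \cite{Bo}). For the general case $\pi_l={\rm Ind}_P^G(\sigma)$ with $\sigma=\bigotimes_i|\cdot|^{r_i}{\rm St}_{n_i}$, it writes down an explicit orthonormal basis $\{f_w\}_{w\in W/W_M}$ of $\pi_l^{I_l(l)}$, drops by positivity to the single term indexed by $w_P$ (the vector supported on the big cell $Pw_PI_0$), and unfolds the stable integral defining that term to obtain exactly $J_{\sigma^{\rm op}}=\prod_i J_{{\rm St}_{n_i}}(T_1)=1$. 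The key point is that this particular choice $f_{w_P}$ makes the Whittaker period factor cleanly through the Levi with \emph{no dependence on the exponents} $r_i$, so no uniformity or non-vanishing argument is needed at all. Your vector $v_0$ is not pinned down precisely enough to see this; the correct choice is not the obvious section supported on $PI_0$ but the one supported on $Pw_PI_0$, following \cite[Section~6.3]{AB}.
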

\begin{proof}
We start by considering $\pi_l = {\rm St}_n$. In this case, $\pi_l^{I_0}$ is one dimensional and it is straight forward to see that $$J_{{\rm St}_n}(T_1) = 1.$$ More precisely, one can simply use the formula for the corresponding matrix coefficient given in equation (3) in the proof of Proposition~5.3 in \cite{Bo}. 

The general case can be reduced to this one by means of parabolic induction. Following the discussion above we can assume that 
\begin{equation}
	\pi = {\rm Ind}_{P(\Bbb{Q}_l)}^{{\rm GL}_n(\Bbb{Q}_l)}(\sigma) \nonumber
\end{equation}
with $\sigma =(\tau_1,\ldots,\tau_d)$ and $\tau_d = \vert\cdot \vert^{ir_i}{\rm St}_{n_i}$. Each $\tau_i$ has a unique Iwahori fixed vector $v_{\tau_i}$. Therefore
\begin{equation}
v_{\sigma} = v_{\tau_1}\otimes \ldots \otimes v_{\tau_d} \nonumber
\end{equation}
is the unique (non-trivial) $I_0\cap M(\Bbb{Q}_l)$-fixed vector of $\sigma$. We define $J_{\sigma}$ in the obvious way, see the display above Lemma~6.9 in \cite{AB}.\footnote{Note that in \cite{AB} the notation $\mathcal{S}_{\sigma}$ is used instead of $J_{\sigma}$.} Our goal is  to reduce  $J_{\pi}$ to $J_{\sigma}$ in the case of parabolically induced representations. The reduction argument is similar to the one given in \cite[Section~6.3]{AB}. We can work with the orthonormal basis
\begin{equation}
f_w(g) = {\rm Vol}(P(\mathbb{Z}_l)wI_0,dk)^{-\frac{1}{2}}\cdot \begin{cases}
\delta_P(m)\sigma(m)v_{\sigma} & \text{ if }g=nmwk \text{ for }n\in N, \, m\in M \text{ and }k\in I_0, \\
0 &\text{ else}
\end{cases}\nonumber
\end{equation}
indexed by $w\in W/W_M$. Note that here we are using that ${\rm GL}_n(\Bbb{Q}_l) = \bigsqcup_{w\in W/W_M} P(\mathbb{Q}_l)w I_0$. 

In general, computing the corresponding Whittaker periods is combinatorial quite involved. However, since we are assuming that $\pi_l$ is unitary and irreducible, we have positivity. Thus, it suffices to establish that 
\begin{equation}
\int_{U(\Bbb{Q}_l)}^{{\rm st}} \langle  \pi(u)f_{w_P},f_{w_P}\rangle_{\pi} \boldsymbol{\psi}(u)^{-1} du \gg 1 \nonumber
\end{equation}
for $w_P$ as in the proof of \cite[Lemma~6.9]{AB}. The computation of this period is straight forward. First one computes, essentially following \cite[Lemma~6.8]{AB} that
\begin{equation}
\langle \pi(u)f_{w_P},f_{w_P}\rangle_{\pi} = \int_{K_l} \langle f_{w_P}(ku),f_{w_P}(k)\rangle_{\sigma}dk = \langle f(w_P u),v_{\sigma}\rangle_{\sigma},\nonumber
\end{equation}
for $u\in U(\Bbb{Q}_l)$. Now one considers the condition $w_Mu\in P(\Bbb{Q}_l)w_P I_0$. Decomposing $u=u_{M^{\rm op}}u_{N^{\rm op}} \in M^{\rm op}N^{\rm op}\cap U$ one obtains $u_{N^{\rm op}}\in I_0$. With this at hand we get
\begin{align}
J_{\pi}(T_1) &\geq \int_{U(\Bbb{Q}_l)}^{{\rm st}} \langle  \pi(u)f_{w_P},f_{w_P}\rangle_{\pi} \boldsymbol{\psi}(u)^{-1}du \nonumber \\
&= \int_{U(\Bbb{Q}_l)}^{{\rm st}} \langle f_{w_P}(w_Pu),v_{\sigma}\rangle_{\sigma} \boldsymbol{\psi}(u)^{-1}du \nonumber \\
&= \int_{U(\Bbb{Q}_l)\cap M^{\op}}^{{\rm st}} \langle  \sigma(w_Pu_{M^{\rm op}}w_{P}^{-1})v_{\sigma},v_{\sigma}\rangle_{\sigma} \boldsymbol{\psi}(u_{M^{\rm op}})^{-1}du_{M^{\rm op}}= J_{\sigma^{\rm op}}(T_1) = 1.\nonumber
\end{align}
This computation is similar to the final steps in the proof of \cite[Lemma~6.9]{AB}. The result is analogous to \cite[Lemma~4.1 (b)]{CSh}.

\end{proof}

\section{On P-adic Orbital Integrals of Kloosterman Type}

In this section, we define and study (local) orbital integrals of Kloosterman type. These will appear in the geometric expansion of the Fourier coefficients of Poincar\'e series. 

Since we will exclusively work locally over the field $\mathbb{Q}_p$ we drop the subscript $p$ throughout this section. Note that we will encounter the unramified situation $p\neq l$ as well as the ramified situation $p=l$.

Let $F\colon G(\mathbb{Q}_p)\to \mathbb{C}$ be a function such that
\begin{equation}
	F(g) = \begin{cases}
		\boldsymbol{\psi}(u)Q(k) &\text{ if }g=utk \text{ with }u\in U(\mathbb{Q}_p),\, t\in Z(\mathbb{Q}_p)T(\mathbb{Z}_p) \text{ and }k\in K_p,\\
		0 &\text{ else.}
	\end{cases}\nonumber
\end{equation}
for this to be well defined $Q\colon K_p\to \mathbb{C}$ must be $T(\mathbb{Z}_p)$ invariant. Then the orbital integral is defined as
\begin{equation}
	\mathcal{O}^{\boldsymbol{\psi}}(tw,F) = \int_{U_w(\mathbb{Q}_p)}F_Q(twu)\boldsymbol{\psi}(u)^{-1}du.\label{p_orb}
\end{equation}
In the global context these will be denoted $\mathcal{O}_p^{\boldsymbol{\psi}_p}(tw,F_p)$ to indicate that they are defined at the place $p$.

\begin{remark}
In practice we will encounter two specific situations. First, for $p=l$ we can take $Q\in \mathcal{H}_W$. Second, the simplest choice is the characteristic function $Q=\mathbbm{1}_{K_p}$ of $K_p$. In this case, one can relate the corresponding orbital integral $\mathcal{O}(tw,F)$ to the classical Kloosterman sum associated to the Weyl element $w$. This is worked out in \cite[Theorem~2.12]{Ste}. However, to spare the translation process we prefer to work with the orbital integral directly.
\end{remark}

For $w=1$ the orbital integral is easy to compute:
\begin{equation}
	\mathcal{O}^{\boldsymbol{\psi}}(t,F) = \delta_{t\in Z(\mathbb{Q}_p)T(\mathbb{Z}_p)}F(\mathbf{1}_n).\nonumber
\end{equation}
For general $w$ we record the trivial bound
\begin{equation}
		\vert \mathcal{O}^{\boldsymbol{\psi}}(tw,F)\vert \leq \mathcal{O}^{\mathbf{1}}(tw,\vert F\vert).\nonumber
\end{equation}
We note that $t\mapsto \mathcal{O}^{\mathbf{1}}(tw,\cdot)$ is $Z(\mathbb{Q}_p)T(\mathbb{Z}_p)$-invariant. Thus it makes sense to define
\begin{equation}
	Z_w(F;\chi) = \sum_{t\in Z(\mathbb{Q}_p)T(\mathbb{Z}_p)\backslash T(\mathbb{Q}_p)} \mathcal{O}^{\mathbf{1}}(tw,\vert F\vert)\delta_B^{-\frac{1}{2}}(t)\chi^{-1}(t), \label{def_Kloosterman_zeta}
\end{equation}
for an unramified character $\chi$ which is trivial on $Z(\mathbb{Q}_p)$. Ideas taken from \cite[Section~2]{DR} will show that this converges for $\chi$ sufficiently dominant. Define
\begin{equation}
	\Phi_{Q}^{\chi}(g) = \int_{Z(\mathbb{Q}_p)\backslash T(\mathbb{Q}_p)} \vert F(tg)\vert \delta_B^{-\frac{1}{2}}(t)\chi^{-1}(t)dt,\nonumber
\end{equation}
where the notion is justified since $\vert F\vert$ depends only on the function $Q\colon K_p\to \mathbb{C}$ used in its definition. We see directly that $\Phi_Q^{\chi}\in I(\chi)$. Thus we can write
\begin{equation}
	Z_w(F;\chi) = [\mathcal{A}_w^{\chi}\Phi_Q^{\chi}](\mathbf{1}_n).\label{eq:zeta_as_int}
\end{equation}
This reduces the analysis of $Z_w(F,\cdot)$ to the analysis of $\mathcal{A}_w^{\chi}$. In general, this is still a hard problem, but for certain $Q$ this is doable. We record the following straight forward statement:

\begin{lemma}\label{lm:another_lemm}
If $Q=\mathbbm{1}_{K_p}$, then $$\Phi_Q^{\chi} = \phi_+^{\chi}\in I(\chi)$$ is the spherical vector. If $p=l$ and $Q=T_w\in \mathcal{H}_W$, then we have
\begin{equation}
	\Phi_{Q}^{\chi} = \mathcal{V}_l\cdot \phi_w^{\chi}\in I(\chi).\nonumber
\end{equation}
\end{lemma}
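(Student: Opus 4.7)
The plan is to compute $\Phi_T^{\chi}(g)$ directly via the Iwasawa decomposition and identify it with the claimed element of $I(\chi)$. First I would check that $\Phi_T^{\chi}$ lies in $I(\chi)$, i.e.\ transforms on the left by $\chi \cdot \delta_B^{1/2}$ under $B(\mathbb{Q}_p)$. For $b = b_T b_U \in T(\mathbb{Q}_p) U(\mathbb{Q}_p)$ the substitution $t \mapsto t b_T^{-1}$ pulls the factor $\chi(b_T) \delta_B^{1/2}(b_T)$ out of the character, and the remaining integrand $|F_p(t b_U g)|$ depends only on the Iwasawa compact part of $g$ (which is unchanged under left-multiplication by $b_U$); hence $|F_p(t b_U g)| = |F_p(tg)|$ and $\Phi_T^{\chi} \in I(\chi)$. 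By Iwasawa it then suffices to compare $\Phi_T^{\chi}$ with the claimed function on the maximal compact $K_p$.

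For $k \in K_p$ the trivial Iwasawa decomposition $k = 1 \cdot 1 \cdot k$ shows that $F_p(tk) \neq 0$ forces the torus variable $t$ to lie in the single coset $Z(\mathbb{Q}_p) T(\mathbb{Z}_p)$ of $Z(\mathbb{Q}_p) \backslash T(\mathbb{Q}_p)$, on which $|F_p(tk)| = |T(k)|$. Since $\chi$ is unramified and trivial on $Z(\mathbb{Q}_p)$ and $\delta_B$ is trivial on $T(\mathbb{Z}_p)$, the weight $\delta_B^{-1/2}(t) \chi^{-1}(t)$ equals $1$ throughout this coset; moreover the compatibility of \eqref{def_Kloosterman_zeta} with \eqref{eq:zeta_as_int} forces the measure normalization that gives this coset volume $1$. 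Hence
\begin{equation}
\Phi_T^{\chi}(k) = |T(k)| \qquad \text{for all } k \in K_p. \nonumber
\end{equation}

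For the first claim, $T = \mathbbm{1}_{K_p}$ gives $\Phi_T^{\chi}(k) = 1$, which agrees with the $K_p$-fixed spherical vector $\phi_+^{\chi}$ normalized to take the value $1$ on $K_p$; thus $\Phi_T^{\chi} = \phi_+^{\chi}$ on all of $G(\mathbb{Q}_p)$. For the second claim, $T = T_w = \mathcal{V}_l \mathbbm{1}_{I_0 w I_0}$ gives $\Phi_T^{\chi}(k) = \mathcal{V}_l \mathbbm{1}_{I_0 w I_0}(k)$, so it remains to show $\phi_w^{\chi}|_{K_p} = \mathbbm{1}_{I_0 w I_0}$. This reduces to the two assertions: (i) $K_p \cap B(\mathbb{Q}_l) w I_0 = I_0 w I_0$, and (ii) if $k \in K_p$ is written as $k = t u w k'$ with $k' \in I_0$, then automatically $t \in T(\mathbb{Z}_l)$ and hence $\chi(t) \delta_B^{1/2}(t) = 1$. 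Claim (ii) follows from $K_p \cap B = B(\mathbb{Z}_l)$, since $tuw = k(k')^{-1} \in K_p$ and $w \in K_p$ force $tu \in B \cap K_p = B(\mathbb{Z}_l)$. For (i), the inclusion ``$\supseteq$'' is immediate from $B(\mathbb{Z}_l) \subseteq I_0$, and the reverse inclusion follows from the Iwahori factorization of the left $I_0$-factor in $I_0 w I_0$ together with the routine matrix check that the lower-unipotent part of $I_0$ can, after conjugating past $w$, be absorbed into the right $I_0$-factor.

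The main obstacle is the compatibility identity $K_p \cap B w I_0 = I_0 w I_0$, which encodes the interplay between the Bruhat and Iwahori cell structures; the rest is bookkeeping with Iwasawa coordinates and the defining invariance properties of $F_p$.
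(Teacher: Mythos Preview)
Your proof is correct in substance; the paper itself gives no argument at all, introducing the lemma only as a ``straight forward statement'' and moving on, so your write-up is strictly more detailed than what appears there.

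One small slip: in your verification of claim (i), the two inclusion directions are swapped. The inclusion $K_p \cap B(\mathbb{Q}_l) w I_0 \subseteq I_0 w I_0$ is the one that follows from $B \cap K_p = B(\mathbb{Z}_l) \subseteq I_0$ (this is essentially your claim (ii) applied), whereas the reverse inclusion $I_0 w I_0 \subseteq B(\mathbb{Q}_l) w I_0$ is the one requiring the Iwahori factorization $I_0 = B(\mathbb{Z}_l) \cdot U^{-}(l\mathbb{Z}_l)$ together with the observation that $w^{-1} U^{-}(l\mathbb{Z}_l) w$ reduces to the identity mod $l$ and hence lies in $I_0$. Both arguments are present and correct; only the labels ``$\supseteq$'' and ``reverse inclusion'' should be interchanged.

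As an incidental remark (not your fault), the constant $\mathcal{V}_l = l^{n(n-1)/2}$ in the lemma statement is not literally equal to the normalizing factor $[K_l : I_0]$ appearing in the definition of $T_w$; the paper only asserts $[K_{\rm fin} : I(q)] \asymp \mathcal{V}_q$. Your computation correctly produces $\Phi_{T_w}^{\chi} = [K_l : I_0]\, \phi_w^{\chi}$, which is what is actually used downstream.
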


For $Q=\mathbbm{1}_{K_p}$ we can thus insert \eqref{eq:intertwiner_eig_sph} in \eqref{eq:zeta_as_int} and obtain
\begin{equation}
	Z_w(F;\chi) = C_w(\chi).\label{nonarch_vers_comp}
\end{equation}
By Mellin inversion (or expanding $C_w(\cdot)$ and comparing coefficients) we obtain the following result.

\begin{lemma}\label{triv_bound_KS}
For $Q=\mathbbm{1}_{K_p}$ and $c\in \mathbb{Q}_{\neq 0}^{n-1}$ we have
\begin{equation}
	\mathcal{O}^{\mathbf{1}}(c^{\ast}w,F) \ll \vert c_1\cdots c_{n-1}\vert^{1+\epsilon}.\nonumber
\end{equation}
and $\mathcal{O}^{\mathbf{1}}(c^{\ast}w,F) = 0$ for $c\not\in \mathbb{Z}_p^{n-1}$.
\end{lemma}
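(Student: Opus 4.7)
The idea is that the whole lemma is a coefficient extraction from the closed form identity $Z_w(F_p;\chi)=C_w(\chi)$ in \eqref{nonarch_vers_comp} (applied to the $F_p$ associated with $T=\mathbbm{1}_{K_p}$), viewed as an equality of Dirichlet series in the unramified character $\chi$.

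Concretely, I would first expand the Gindikin--Karpelevich product combinatorially:
\begin{equation*}
	C_w(\chi) = \prod_{\alpha\in R(w^{-1})}\!\!\Bigl(1 + (1-l^{-1})\sum_{k\geq 1}\alpha(\chi)^k\Bigr) = \sum_{\lambda}\widetilde{a}_{\lambda}\,\chi(h_{\lambda}(l)),
\end{equation*}
where $\lambda$ ranges over $\mathbb{Z}_{\geq 0}$-combinations of roots in $R(w^{-1})\subseteq \Delta_+$ and
\begin{equation*}
0 \leq \widetilde{a}_{\lambda} \leq \#\{(k_{\alpha})_{\alpha\in R(w^{-1})} : k_\alpha\in\mathbb{Z}_{\geq 0},\, \textstyle\sum_\alpha k_\alpha\alpha = \lambda\} \leq (1+\mathrm{ht}(\lambda))^{|R(w^{-1})|}.
\end{equation*}
The two essential features of this expansion are the non-negativity of the coefficients and their polynomial growth in $\mathrm{ht}(\lambda)$. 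On the other side, \eqref{def_Kloosterman_zeta} already presents $Z_w(F_p;\chi)$ as a Dirichlet series in $\chi^{-1}(c^{\ast})$ indexed by $T(\mathbb{Q}_p)/Z(\mathbb{Q}_p)T(\mathbb{Z}_p)$. Parametrising $c^{\ast}\leftrightarrow \lambda$ via $\chi^{-1}(c^{\ast})=\chi(h_{\lambda}(l))$ (explicitly $c_i = l^{m_{n-i}}$ when $\lambda=\sum_j m_j\alpha_j$) and invoking uniqueness of the Mellin expansion identifies the coefficients:
\begin{equation*}
	\mathcal{O}_p^{\mathbf{1}}(c^{\ast}w,|F_p|)\,\delta_B^{-\frac{1}{2}}(c^{\ast}) = \widetilde{a}_{\lambda}.
\end{equation*}

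Both assertions of the lemma now fall out. If some $c_i\notin\mathbb{Z}_l$, the corresponding $\lambda$ has a negative simple-root coefficient, hence lies outside the support of the expansion; so $\widetilde{a}_{\lambda}=0$ and the orbital integral vanishes. In the other case, a direct Iwasawa computation gives $\delta_B^{-1/2}(c^{\ast}) = (c_1\cdots c_{n-1})^{-1}$ (in the positive real sense $c_i = l^{a_i}$), while $\mathrm{ht}(\lambda) = \log_l(c_1\cdots c_{n-1})$; the polynomial bound on $\widetilde{a}_{\lambda}$ is therefore absorbed into $(c_1\cdots c_{n-1})^{\epsilon}$, yielding $\mathcal{O}_p^{\mathbf{1}}(c^{\ast}w,|F_p|)\ll_\epsilon (c_1\cdots c_{n-1})^{1+\epsilon}$. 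Combining with the trivial bound $|\mathcal{O}_p^{\boldsymbol{\psi}_p}(vc^{\ast}w,F_p)|\leq \mathcal{O}_p^{\mathbf{1}}(vc^{\ast}w,|F_p|)$ and the left $Z(\mathbb{Q}_p)T(\mathbb{Z}_p)$-invariance of $|F_p|$ (which I would verify with a short Iwasawa argument) concludes the proof. The only real subtlety is the bookkeeping in the Mellin matching step, in particular getting the signs right in the $c^{\ast}\leftrightarrow \lambda$ dictionary; everything else is mechanical once \eqref{nonarch_vers_comp} is available.
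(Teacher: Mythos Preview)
Your proposal is correct and is precisely the approach the paper takes: the paper's proof is the single sentence ``By Mellin inversion (or expanding $C_w(\cdot)$ and comparing coefficients) we obtain [the lemma]'', and you have carried out the second option in detail. Your bookkeeping in the $c^{\ast}\leftrightarrow\lambda$ dictionary, the computation $\delta_B^{-1/2}(c^{\ast})=(c_1\cdots c_{n-1})^{-1}$, and the polynomial coefficient bound are all correct, so there is nothing to add.
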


This is of course nothing but the trivial bound for Kloosterman sums due to R. Dabrowski and M. Reeder in disguise. (See \cite[Theorem~0.3]{DR}  for the local formulation or \cite[(4.6)]{AB} in the global case.)

For the rest of this section we assume that $p=l$ and $Q\in \mathcal{H}_W$. We start by a simple observation concerning the support of $c\mapsto \mathcal{O}(vc^{\ast}w,F)$.

\begin{lemma}
Let $Q\in \mathcal{H}_W$ and $c\in \mathbb{Q}_{\neq 0}^{n-1}$, then $\mathcal{O}^{\boldsymbol{\psi}}(vc^{\ast}w,F) = 0$ unless $c\in \mathbb{Z}_p^{n-1}$.
\end{lemma}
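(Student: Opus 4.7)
Since $T\in\mathcal{H}_W$ is supported in $K_l$, the function $F_l$ is supported on $U(\mathbb{Q}_l)\,Z(\mathbb{Q}_l)T(\mathbb{Z}_l)\,K_l$. Hence non-vanishing of $\mathcal{O}_l^{\boldsymbol{\psi}_l}(vc^{\ast}w,F_l)$ forces the existence of some $u\in U_w(\mathbb{Q}_l)$ with $vc^{\ast}wu\in U(\mathbb{Q}_l)Z(\mathbb{Q}_l)T(\mathbb{Z}_l)K_l$; equivalently, the ``Iwasawa torus class'' of $vc^{\ast}wu$ must lie in $Z(\mathbb{Q}_l)T(\mathbb{Z}_l)/T(\mathbb{Z}_l)$. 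The whole argument then comes down to quantifying this class and deducing $c\in\mathbb{Z}_l^{n-1}$.

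To do so I would introduce the invariants
\begin{equation*}
L_i(g)=\max_{|J|=i}\bigl|\det g_{[n-i+1,\ldots,n],J}\bigr|_l,
\end{equation*}
the maximal $l$-adic absolute value of an $i\times i$ minor formed from the bottom $i$ rows of $g$. A standard calculation with the Iwasawa decomposition $g=utk$ (using that $k\bmod l\in\mathrm{GL}_n(\mathbb{F}_l)$ so that the bottom $i$ rows of $k$ contain a unit minor) yields $L_i(g)=|t_{n-i+1}\cdots t_n|_l$. Thus $g\in U(\mathbb{Q}_l)Z(\mathbb{Q}_l)T(\mathbb{Z}_l)K_l$ is equivalent to $L_i(g)=|z|_l^{\,i}$ for all $i$, and since $L_n(vc^{\ast}wu)=|\det c^{\ast}|_l=1$ we must have $|z|_l=1$ and hence $L_i(vc^{\ast}wu)=1$ for every $i$.

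The heart of the proof is the factorisation
\begin{equation*}
L_i(vc^{\ast}wu)=\Bigl(\prod_{k=n-i+1}^{n}|c^{\ast}_k|_l\Bigr)\cdot M_i(u),\qquad M_i(u):=\max_{|J|=i}\bigl|\det u_{R_i,J}\bigr|_l,
\end{equation*}
where $R_i=\sigma^{-1}(\{n-i+1,\ldots,n\})$ and $\sigma$ is the permutation attached to $w$. This is because the $k$-th row of $vc^{\ast}wu$ equals $v_k c^{\ast}_k$ times the $\sigma^{-1}(k)$-th row of $u$, allowing the scalar factors to be pulled out of every $i\times i$ minor. Choosing $J=R_i$ (listed in increasing order), the resulting square submatrix of the upper unipotent $u$ is itself upper unipotent (its diagonal is a sub-diagonal of $u$), hence has determinant $1$. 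This gives $M_i(u)\geq 1$, and combined with $L_i(vc^{\ast}wu)=1$ forces $\prod_{k=n-i+1}^{n}|c^{\ast}_k|_l\leq 1$ for each $i$.

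Finally, using the conventions $c_0=c_n=1$ one has $c^{\ast}_k=c_{n-k+1}/c_{n-k}$, and the product $\prod_{k=n-i+1}^n c^{\ast}_k$ telescopes to $c_i$. The inequalities above then read $|c_i|_l\leq 1$ for all $i=1,\ldots,n-1$, which is exactly $c\in\mathbb{Z}_l^{n-1}$. I expect the only delicate bookkeeping is verifying the factorisation of $L_i$ with due attention to the permutation $\sigma$; the conceptual point is the trivial-looking lower bound $M_i(u)\geq 1$, which handles any possible ``unipotent correction'' from $u\in U_w(\mathbb{Q}_l)$ in a single stroke.
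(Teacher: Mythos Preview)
Your argument is correct and takes a genuinely different route from the paper. The paper reduces by linearity to $T=T_u$ for $u\in W$, then invokes the identity $Z_w(F_l;\chi)=\mathcal{V}_l\,[\mathcal{A}_w^{\chi}\phi_u^{\chi}](\mathbf{1}_n)$ together with the recursion \eqref{rec_intertwiners} for intertwining operators: each application of $\mathcal{A}_s^{\chi}$ introduces only factors $c_\alpha(\chi)=(1-l^{-1})/(1-\alpha(\chi))$ and $l^{-1}$, so the zeta function expands as a power series in $\alpha(\chi)$ for positive roots $\alpha$, and comparison with the definition \eqref{def_Kloosterman_zeta} forces $c\in\mathbb{Z}_l^{n-1}$. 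Your proof instead works directly with the support condition, using the $l$-adic bottom-row minors $L_i$ as a $p$-adic analogue of the archimedean $\Delta_j$'s; the key inequality $M_i(u)\geq 1$ (from the principal minor of a unipotent) replaces the intertwining-operator machinery entirely. The paper's approach has the advantage of feeding directly into the later computations of Lemma~\ref{general_expr} and Proposition~\ref{prop:crucial}, which are built on the same recursion; your approach is more elementary, self-contained, and applies uniformly to any $T$ supported in $K_l$ without first decomposing into the basis $\{T_u\}$.
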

\begin{proof}
Without loss of generality we can assume that $T=T_u$ for $u\in W$. The result will be obtained by analyzing $[\mathcal{A}_w^{\chi}\phi_{u}^{\chi}](\mathbf{1}_n)$. We start by expanding
\begin{equation}
	\mathcal{A}_w^{\chi}\phi_{u}^{\chi} = \sum_{v\in W}a_{v,u}(w;\chi)\cdot \phi_v^{w\chi} \nonumber
\end{equation}
for coefficients $a_{v,u}(w;\chi)\in \mathbb{C}$. After recalling the support of the functions $\phi_v^{w\chi}$ we obtain
\begin{equation}
	Z_w(T_u;\chi) = \mathcal{V}_l\cdot [\mathcal{A}_w^{\chi}\phi_{u}^{\chi}](\mathbf{1}_n) = \mathcal{V}_l\cdot a_{1,u}(w;\chi).\nonumber
\end{equation}
The coefficients $a_{v,u}(w;\chi)$ can be computed recursively using \eqref{rec_intertwiners}. We will do so in the particular case $u=1$ in Lemma~\ref{general_expr} below. However, for the moment it is sufficient to observe that there are sets $S_1,S_2\subseteq R(w^{-1})$ such that 
\begin{equation}
	a_{1,u}(w;\chi) = a_{1,u}(w) \cdot \prod_{\alpha\in S_1}\alpha(\chi) \cdot \prod_{\alpha\in S_2}c_{\alpha}(\chi),\nonumber
\end{equation}
for some new constant $a_{1,u}(w)$ independent of $\chi$. In view of the definition of $Z_w(T_u;\chi)$ we have obtained
\begin{equation}
	\sum_{t\in Z(\mathbb{Q}_p)T(\mathbb{Z}_p)\backslash T(\mathbb{Q}_p)} \mathcal{O}^{\mathbf{1}}(tw,\vert F\vert)\delta_B^{-\frac{1}{2}}(t)\chi^{-1}(t) = \mathcal{V}_l\cdot a_{1,u}(w) \cdot \prod_{\alpha\in S_1}\alpha(\chi) \cdot \prod_{\alpha\in S_2}c_{\alpha}(\chi).\nonumber
\end{equation}
The right hand side can be expanded as a power series and the claimed vanishing result follows after comparing coefficients.
\end{proof}

Let $(c,w)$ be admissible. Thus $w=w_{d_1,\ldots,d_k}$ and $c$ satisfies \eqref{eg:admiss}. We observe that modulo $T_0(\mathbb{Z}_l)$ the $(n-1)$-tuple $c\in \mathbb{Z}^{n-1}$ is completely determined by $c_i$ with $i\in \{n-d_1,n-d_1-d_2,\ldots, n-d_1-\ldots-d_{k-1}\}$.\footnote{It is easier to see that $c_{d_1}, c_{d_1+d_2},\ldots$ (essentially) determine $c$ when $(c,w)$ is admissible. But our choice of indices also works and will be crucial to the argument.} To detect this condition we will use $k-1$-tuples $\mathbf{s}\in \mathbb{C}^{k-1}$ and associate the (unramified) characters $\chi_{\mathbf{s}}^{\rm ad}$ defined by
\begin{equation}
	\chi_{\mathbf{s}}^{\rm ad}(c^{\ast}) = \prod_{i=1}^{k-1}\vert c_{n-d_1-\ldots-d_i}\vert_l^{-s_i}.\nonumber
\end{equation}
Note that this character is defined in terms of the parametrization $c \mapsto c^{\ast}$ of $T_0(\mathbb{Q}_l)$. One technical difficulty that will arise below is that the characters $\chi_{\mathbf{s}}^{\rm ad}$ are not dominant (unless $w$ is the long element). We will need the following result, which turns out to be combinatorial quite involved:

\begin{prop}\label{prop:crucial}
For $w=w_{d_1,\ldots,d_r}$ and $\mathbf{s}\in \mathbb{C}^{k-1}$ we have
\begin{equation}
	\sup_{\Re(\mathbf{s}) = \mathbf{1}+\boldsymbol{\epsilon}}\vert \mathcal{A}_{w}^{\chi^{\rm ad}_{\mathbf{s}}}\phi_1^{\chi^{\rm ad}_{\mathbf{s}}}(\mathbf{1}_n) \vert \ll l^{-\frac{n(k-1)}{2}}. \nonumber
\end{equation}
\end{prop}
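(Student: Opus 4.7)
The plan is to attack the value $[\mathcal{A}_w^\chi\phi_1^\chi](\mathbf{1}_n)$ directly via the recursion, noting that since $\phi_u^{w.\chi}(\mathbf{1}_n)=\delta_{u,1}$ this quantity equals the coefficient $a_1$ of $\phi_1^{w.\chi}$ in $\mathcal{A}_w^\chi\phi_1^\chi$. Fix a reduced expression $w=s_{j_1}\cdots s_{j_m}$ with $m=\ell(w)=\sum_{i<j}d_id_j$, preferably one adapted to the block anti-diagonal structure (building $w_{d_1,\ldots,d_r}$ up block by block). Then $\mathcal{A}_w^\chi=\mathcal{A}_{s_{j_m}}^{\chi_{m-1}}\cdots\mathcal{A}_{s_{j_1}}^\chi$ with $\chi_k=s_{j_k}\cdots s_{j_1}.\chi$, and iterating the two-term relation \eqref{rec_intertwiners} expands $\mathcal{A}_w^\chi\phi_1^\chi$ as a sum over $2^m$ \textbf{paths}: at each step $k$, one chooses branch $A$ (keep the index $u_k=u_{k-1}$, multiply by $\beta_k(\chi)c_{\beta_k}(\chi)$ or just $c_{\beta_k}(\chi)$ according to the length condition, where $\beta_k=s_{j_1}\cdots s_{j_{k-1}}.\alpha_{j_k}$ runs through $R(w^{-1})$) or branch $B$ (update $u_k=s_{j_k}u_{k-1}$, contributing $l^{-1}$ when the length increases and $1$ otherwise).

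Starting from $u_0=1$, a path contributes to $a_1$ precisely when the reverse product of its $B$-sub-word evaluates to the identity in $W$. For $\chi=\chi_{\mathbf{s}}^{\rm ad}$ with $\Re(\mathbf{s})=\mathbf{1}+\boldsymbol\epsilon$, the explicit formula $\alpha_i(\chi_{\mathbf{s}}^{\rm ad})=l^{-s_j}$ when $i=d_1+\cdots+d_j$ and $1$ otherwise gives, for a root $\beta_k=\alpha_{a_k,b_k}$ whose endpoints lie in block-rows $p_k<q_k$, the value $\beta_k(\chi_{\mathbf{s}}^{\rm ad})=l^{-(s_{p_k}+\cdots+s_{q_k-1})}$ of absolute value $l^{-(q_k-p_k)(1+\epsilon)}$. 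In particular $|\beta_k(\chi)|\le l^{-1-\epsilon}$ on $R(w^{-1})$, so every $c_{\beta_k}(\chi)$ is $O(1)$, i.e. the apparent poles of the recursion at the non-regular character $\chi_{\mathbf{s}}^{\rm ad}$ occur only at roots outside $R(w^{-1})$ and thus do not enter the final expression. The total $l$-power of a path is bounded by $\ell^{-N}$ where $N=(1+\epsilon)\sum_{k\in A_+}(q_k-p_k)+\#B_+$, $A_+$ (resp.\ $B_+$) being the set of length-increasing $A$- (resp.\ $B$-) steps.

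The core of the argument is then to verify the combinatorial inequality $N\ge n(r-1)/2$ for every path whose $B$-sub-word has trivial reverse product. For the ``all-$A$'' path one computes $\sum_{\beta\in R(w^{-1})}(q_\beta-p_\beta)=\sum_{j=1}^{r-1}D_j(n-D_j)$ with $D_j=d_1+\cdots+d_j$, and a convexity/AM-GM estimate yields $\sum_j D_j(n-D_j)\ge n(r-1)/2$ with equality essentially never attained; the ``all-$A$'' contribution is therefore strictly better than the target bound. For a path with $\ell$ non-trivial $B$-steps, the trivial reverse-product condition forces the $B$-choices to pair up via cancellations in $W$, and each such pair effectively removes an $A$-contribution of size at most $l^{-2(1+\epsilon)}$ (the largest possible $q-p$ compatible with the omitted position) and replaces it by a single $l^{-1}$; carefully bookkeeping these trades along the block decomposition $(d_1,\ldots,d_r)$ yields $N\ge n(r-1)/2$ uniformly. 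Since the number of paths is $O_n(1)$, summing gives the desired bound.

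The hard part is precisely this combinatorial book-keeping (Step~3): one must choose the reduced word wisely so that each $B$-pair's cancellation can be matched with its $A$-trade explicitly, and one has to rule out the possibility that many short ``$B$-cancellations'' (each costing only $l^{-1}$) accumulate into a path with $N<n(r-1)/2$. This is where the assumption that $w$ has block anti-diagonal form $w_{d_1,\ldots,d_r}$ enters crucially: the set $R(w^{-1})$ consists exactly of roots $\alpha_{a,b}$ with $a,b$ in distinct block-rows, which constrains which reductions $s_{j_k}u<u$ can occur along a given path and hence forces any $B$-cancellation to span a non-trivial number of block boundaries. This step is technical enough that the argument in the paper proper is likely to proceed by induction on $r$, reducing $\mathcal{A}_{w_{d_1,\ldots,d_r}}^\chi$ via the factorization $w_{d_1,\ldots,d_r}=w_{d_1,\ldots,d_{r-1}}^{(n)}\cdot w_{d_1+\cdots+d_{r-1},d_r}$ (which is reduced, as a length count confirms) and invoking the $r=2$ case of the proposition as the principal base estimate.
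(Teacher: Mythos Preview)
Your setup is correct and matches the paper's: the recursion \eqref{rec_intertwiners} unfolds $\mathcal{A}_w^\chi\phi_1^\chi$ as a sum over subsets $S\subseteq R(w^{-1})$ (your $A$-steps) with the surviving terms indexed by those $S$ for which the complementary word $w_S$ is trivial, and your pole check is exactly Lemma~\ref{no_pole}. Where you part ways with the paper is in the combinatorial core. The paper does \emph{not} start from the all-$A$ path and argue that inserting $B$-pairs can only help; instead it fixes a very specific reduced word for $w$ (built row by row from $N_w$), observes that with this choice the weight $v_{\rm ad}(\beta_j)$ depends only on the simple reflection $s_{i_j}$, and then identifies a \emph{critical set} $S_{\rm cri}$ characterised by $m_{S_{\rm cri}}(s_i)\in\{0,1\}$ with $m_{S_{\rm cri}}(s_i)\equiv m_{R(w^{-1})}(s_i)\pmod{2}$. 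This $S_{\rm cri}$ is the \emph{smallest} admissible $S$, not the largest, and it is the one giving the dominant contribution; the paper then reduces everything to the elementary inequality \eqref{to_show} for this single set and checks it by a block-by-block count.

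Your argument has two concrete gaps. First, the trade-off in your Step~3 points the wrong way: swapping an $A$-pair (which, along the all-$A$ path from $u_0=1$, contributes $l^{-(q-p)}$ with $q-p$ possibly large) for a $B$-pair (contributing $l^{-1}\cdot 1$) can \emph{increase} the term when $q-p\ge 2$, so the all-$A$ path is not the worst case---the minimal-$S$ extreme is. One sees this already for the long element: all-$A$ gives $l^{-\sum_j j(n-j)}$, which is much smaller than the target $l^{-n(n-1)/2}$, whereas $S_{\rm cri}$ hits the target exactly. Second, the proposed induction on $r$ via the factorisation $w_{d_1,\dots,d_r}=w'\cdot w_{D,d_r}$ does not close: $\mathcal{A}_{w_{D,d_r}}^\chi\phi_1^\chi$ is a genuine linear combination of many $\phi_u^{w_{D,d_r}.\chi}$, so the inner application does not return you to the hypothesis of the proposition. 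To make the argument work you would need the paper's device---a reduced word for which the weight attached to a step depends only on the underlying simple reflection---so that the worst case collapses to a single explicitly computable set.
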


\begin{remark}\label{full_rem}
Allowing general unramified characters associated to $\mathbf{s}\in \mathbb{C}^{n-1}$ we expect the estimate
\begin{equation}
	\sup_{\Re(\mathbf{s}) = \mathbf{1}+\boldsymbol{\epsilon}}\vert \mathcal{A}_{w}^{\chi_{\mathbf{s}}}\phi_1^{\chi_{\mathbf{s}}}(\mathbf{1}_n) \vert \ll l^{-l(w)} \nonumber
\end{equation}
to hold. Note that for the long Weyl element these two estimates are of course the same, because all moduli are admissible. 
\end{remark}

The proof of Proposition~\ref{prop:crucial} will take up the remainder of this section and is the technical core of this paper. Unfortunately we could not find a soft abstract argument and therefore we will get dirty hands. 

Given $w=w_{d_1,\ldots, d_k}$ we associate the standard parabolic subgroup $P_w=M_wN_w$ with Levi component
\begin{equation}
	M_w = \left(\begin{matrix} {\rm GL}_{d_1} &  & \\  & \ddots & \\  & & {\rm GL}_{d_k} \end{matrix}\right).\nonumber 
\end{equation}
We see that
\begin{equation}
	N_w = \prod_{\alpha\in R(w^{-1})}U_{\alpha},\nonumber
\end{equation}
where $U_{\alpha} =\{x_{\alpha}(t)\}$. We start with some simple observations:
\begin{itemize}
	\item It is well known that $l(w) = \sharp R(w^{-1})$;
	\item The simple roots in $R(w^{-1})$ are precisely $\alpha_i$ for $i=d_k, d_k+d_{k-1}, \ldots$. We write this as
	\begin{equation}
		R(w^{-1})\cap \Sigma = \{\alpha_i\colon i=d_1,d_1+d_2,\ldots, d_1+\ldots+d_{k-1} \}.\nonumber
	\end{equation}
	\item If $(c,w)$ is admissible, then $c^{\ast} \in Z_{M_w}(\mathbb{Q}_l)\cdot T_0(\mathbb{Z}_l)$, where $Z_{M_w}$ denotes the center of $M_w$.
\end{itemize}

Our analysis of $\mathcal{A}_w^{\chi}\phi_1^{\chi}$ crucially depends on a suitable representation of $w$ as a reduced word in simple reflections $s_i$. Such a word gives rise to a numbering of $R(w^{-1})$, which we imagine as numbering of \textit{free squares} of $N_w$. 

\begin{example}
Of course the simplest examples are given by $w_{1,n-1} = s_1\cdots s_{n-1}$ and $w_{n-1,1} = s_{n-1}\cdots s_1$. The weights are accordingly numbered by $R(w_{1,n-1}^{-1}) = \{\beta_1=\alpha_1,\beta_2=\alpha_1+\alpha_2,\ldots ,\beta_{n-1}=\alpha_1+\ldots+\alpha_{n-1}\}$ and $R(w_{n-1,1}^{-1}) = \{\beta_1=\alpha_{n-1},\beta_2=\alpha_{n-1}+\alpha_{n-2},\ldots ,\beta_{n-1}=\alpha_{n-1}+\ldots+\alpha_{1}\}$. We visualize this as:
\begin{equation}
	N_{w_{1,n-1}} = \left(\begin{matrix} 1 & \beta_1 & \dots & \beta_{n-1} \\  & 1 & & \\& &\ddots & \\ & & & 1\end{matrix}\right) \text{ and }N_{w_{n-1,1}} = \left(\begin{matrix} 1 & & & \beta_{n-1} \\ & \ddots & &\vdots \\  & & 1&\beta_1 \\ & & & 1\end{matrix}\right).\nonumber
\end{equation}
\end{example}

First we need to fix a convenient reduced expression for $w$. This can be done as follows:
\begin{enumerate}
	\item We number the rows of $N_w$ by $R_0,\ldots,R_{n-d_r-1}$ from bottom to top;
	\item Let $l_i = l(R_i)$ denote the length of the row, by which me mean the number of  free entries a matrix in $N_w$ can have in this row;
	\item To each row we attach a reduced word $$w_i=w_{R_i} = s_{n-d_r-i}s_{n-d_k-i+1}\cdots s_{n-d_k-i+(l_i-1)};$$
	\item Putting them together we obtain $$w=w_0w_1\cdots w_{n-d_k-1}=s_{i_{l(w)}}\cdots s_{i_{1}}.$$
\end{enumerate} 
From now on we will always use precisely this reduced word for $w$.

\begin{example}
For the long Weyl element $w_{1,\ldots,1}$ the procedure looks as follows. We have $l_i = i+1$ for $i=0,\ldots,n-2$. The words attached to each row are precisely
\begin{equation}
	w_i = s_{n-i-1}\cdots s_{n-1}.\nonumber
\end{equation}
Thus we end up with the reduced word
\begin{equation}
	w_{1,\ldots,1} = s_{n-1}\cdot s_{n-2}s_{n-1}\cdot \ldots \cdot s_1\cdots s_{n-1}.\nonumber
\end{equation}
Similarly for the Weyl element $w_{1,n-2,1}=w_{\ast}$, which turned out to be of special interest in the arguments of \cite{Bl, AB}, the procedure gives the reduced word
\begin{equation}
	w_{\ast}=s_{n-1}s_{n-2}\cdots s_2s_1s_2\cdots s_{n-1}.\nonumber
\end{equation}
\end{example}

Next we enumerate the positive roots in $R(w^{-1})$ by requiring that
\begin{equation}
	\alpha_{i_j}([s_{i_{j-1}}\cdots s_{i_1}].\chi) = \beta_j(\chi).\nonumber
\end{equation}

Having made these arrangements we will also need some useful notation. Given $S  \subseteq R(w^{-1})$ we write 
\begin{equation}
	w_S =  s_{i_{j_{\sharp S}}}\cdots s_{i_{j_1}},\nonumber
\end{equation}  
for $R(w^{-1})\setminus S = \{\beta_{j_1},\ldots, \beta_{j_{\sharp S}} \}$ with $1\leq i_1< \ldots < i_{\sharp S}\leq l(w)$.
Note that one might have $w_{S} = w_{S'}$ for different $S,S'\subseteq R(w^{-1})$. But we must always have that $l(w)-\sharp S-l(w_S)$ is even. We can further decompose $S = S_{\nearrow}\sqcup  S_{\searrow}$ so that $\beta_j\in S_{\nearrow}$ if $l(s_{i_j}u)>l(u)$ with $u= s_{i_{j_{\ast}}}\cdots s_{i_{j_1}}$ where $j>j_{\ast}$ is the biggest index appearing in $R(w^{-1})\setminus S$. Write $R(w^{-1})\setminus S = S^{\bot}$. We are now ready to write down a general expression for $\mathcal{A}_w^{\chi}\phi_1^{\chi}$:

\begin{lemma}\label{general_expr}
We have
\begin{equation}
	\mathcal{A}_w^{\chi} \phi_1^{\chi} = \sum_{S\subseteq R(w^{-1})} l^{\frac{\sharp S-l(w)-l(w_S)}{2}}\cdot d_S \cdot \chi(\beta_S) \cdot \phi_{w_S}^{w.\chi},\label{gen_expr}
\end{equation}
where 
\begin{equation}
	d_S = \prod_{\beta\in S}\frac{1-l^{-1}}{1-\beta(\chi)} \nonumber
\end{equation}
and 
\begin{equation}
	\beta_S = \sum_{\beta\in S_{\nearrow}} \beta.\nonumber
\end{equation}
\end{lemma}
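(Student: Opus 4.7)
The plan is to expand $\mathcal{A}_w^{\chi}\phi_1^{\chi}$ by iteratively applying the simple-reflection recursion \eqref{rec_intertwiners} along the chosen reduced expression for $w$, and to re-index the resulting $2^{l(w)}$-fold expansion by subsets of $R(w^{-1})$.

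First, using the cocycle relation $\mathcal{A}_{w_1w_2}^{\chi} = \mathcal{A}_{w_2}^{w_1.\chi}\mathcal{A}_{w_1}^{\chi}$ (valid when the expression is reduced), I factor
\begin{equation}
	\mathcal{A}_w^{\chi} = \mathcal{A}_{s_{i_{l(w)}}}^{u_{l(w)-1}.\chi}\cdots \mathcal{A}_{s_{i_2}}^{u_1.\chi}\mathcal{A}_{s_{i_1}}^{\chi},\qquad u_j = s_{i_j}\cdots s_{i_1},\nonumber
\end{equation}
where $w=s_{i_{l(w)}}\cdots s_{i_1}$ is the reduced word fixed above. By construction of the enumeration of $R(w^{-1})$, the positive root appearing at step $j$ satisfies $\alpha_{i_j}(u_{j-1}.\chi) = \beta_j(\chi)$, so every factor $\alpha(\psi)$ and $c_\alpha(\psi)$ showing up along the way will be written directly in terms of $\beta_j(\chi)$.

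Next, I apply \eqref{rec_intertwiners} iteratively starting from $\phi_1^{\chi}$. At step $j$, acting on a term of the form (coefficient)$\cdot \phi_{v_{j-1}}^{u_{j-1}.\chi}$ by $\mathcal{A}_{s_{i_j}}^{u_{j-1}.\chi}$ produces two contributions: a ``diagonal'' one with new index $v_j = v_{j-1}$, and an ``off-diagonal'' one with $v_j = s_{i_j}v_{j-1}$. To a given binary path I associate the set $S\subseteq R(w^{-1})$ of steps where the diagonal branch was taken; writing $R(w^{-1})\setminus S = \{\beta_{j_1},\ldots,\beta_{j_{\sharp S^{\bot}}}\}$ with $j_1<\cdots<j_{\sharp S^{\bot}}$, the terminal subscript is $v_{l(w)} = s_{i_{j_{\sharp S^{\bot}}}}\cdots s_{i_{j_1}} = w_S$ with parameter $u_{l(w)}.\chi = w.\chi$, matching the right-hand side of \eqref{gen_expr}. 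The splitting $S=S_{\nearrow}\sqcup S_{\searrow}$ exactly records, for each $j\in S$, whether the length of $v_{j-1}$ would have increased or decreased under left-multiplication by $s_{i_j}$, which is the only data needed to select between the two cases of \eqref{rec_intertwiners}.

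Reading off the coefficient factor by factor along the path then gives, for fixed $S$,
\begin{equation}
	\Big(\prod_{j\in S_{\nearrow}}\beta_j(\chi)\Big)\cdot\Big(\prod_{j\in S}\frac{1-l^{-1}}{1-\beta_j(\chi)}\Big)\cdot l^{-\sharp S^{\bot}_{\nearrow}}\cdot \phi_{w_S}^{w.\chi} \;=\; \chi(\beta_S)\cdot d_S\cdot l^{-\sharp S^{\bot}_{\nearrow}}\cdot \phi_{w_S}^{w.\chi},\nonumber
\end{equation}
using that $\beta_S=\sum_{\beta\in S_\nearrow}\beta$ and the definition of $d_S$; the factors of $l^{-1}$ come exclusively from the off-diagonal ``length-increasing'' branches, i.e.\ from $j\in S^{\bot}_\nearrow$.

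The last step is to convert the exponent of $l$ into the form stated in the lemma. Reading off the reduced word $s_{i_{j_{\sharp S^{\bot}}}}\cdots s_{i_{j_1}}$ for $w_S$ step by step, each index $j\in S^{\bot}_\nearrow$ increases the length of the partial product by $1$ and each $j\in S^{\bot}_\searrow$ decreases it by $1$; therefore
\begin{equation}
	l(w_S) = \sharp S^{\bot}_\nearrow - \sharp S^{\bot}_\searrow,\qquad \sharp S^{\bot} = \sharp S^{\bot}_\nearrow + \sharp S^{\bot}_\searrow = l(w)-\sharp S,\nonumber
\end{equation}
so $\sharp S^{\bot}_\nearrow = (l(w)-\sharp S + l(w_S))/2$ and $-\sharp S^{\bot}_\nearrow = (\sharp S - l(w) - l(w_S))/2$. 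Substituting this into the previous display and summing over all $S\subseteq R(w^{-1})$ yields \eqref{gen_expr}.

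The only nontrivial bookkeeping is the identification of $S_{\nearrow}$ with the length-increasing branches and the parity identity $l(w_S)=\sharp S^{\bot}_\nearrow - \sharp S^{\bot}_\searrow$ (which also explains the parity remark $l(w)-\sharp S-l(w_S)\in 2\mathbb{Z}$ made in the text); everything else is a direct term-by-term expansion. I expect the bulk of the verification to be this combinatorial bookkeeping, while the intertwiner calculus itself is entirely mechanical given \eqref{rec_intertwiners} and the chosen reduced word.
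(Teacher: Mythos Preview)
Your proof is correct and follows exactly the approach indicated in the paper's one-line proof, namely recursively applying \eqref{rec_intertwiners} along the fixed reduced word for $w$; you have simply filled in the combinatorial bookkeeping that the paper leaves implicit. The identification of the binary expansion with subsets $S\subseteq R(w^{-1})$, the reading of $\chi(\beta_S)\cdot d_S$ from the diagonal branches, and the conversion $-\sharp S^{\bot}_{\nearrow}=(\sharp S-l(w)-l(w_S))/2$ via the length identity $l(w_S)=\sharp S^{\bot}_{\nearrow}-\sharp S^{\bot}_{\searrow}$ are all sound.
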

\begin{proof}
The proof proceeds by induction over the length of $w$. If $w$ is the identity, then $l(w)=0$ and we simply have $\mathcal{A}_w^{\chi} \phi_1 = \phi_1$. Thus, we suppose that $l(w)\geq 1$. In this case we write $w=w's$ for a simple reflection $s$ and $l(w') = l(w)-1$. Given that \eqref{gen_expr} holds for $w'$ we obtain
\begin{equation}
	\mathcal{A}_w^{\chi} \phi_1^{\chi} = \mathcal{A}_s^{w'\chi}\mathcal{A}_{w'}^{\chi} \phi_1^{\chi} = \sum_{S'\subseteq R(w'^{-1})} l^{\frac{\sharp S'-l(w')-l(w_{S'}')}{2}}\cdot d_{S'} \cdot \chi(\beta_{S'}) \cdot \mathcal{A}_s^{w'\chi}\phi_{w_{S'}'}^{w'.\chi}. \nonumber
\end{equation}
Given $S'\subseteq R(w'^{-1})$ we now compute $\mathcal{A}_s^{w'\chi}\phi_{w_{S'}'}^{w'.\chi}$ using \eqref{rec_intertwiners}. We have to consider two cases.

First, suppose that $sw_{S'}'>w_{S'}'$. We obtain
\begin{multline}
	l^{\frac{\sharp S'-l(w')-l(w_{S'}')}{2}}\cdot d_{S'} \cdot \chi(\beta_{S'}) \cdot \mathcal{A}_s^{w'\chi}\phi_{w_{S'}'}^{w'.\chi} \\ = l^{\frac{\sharp S'-l(w')-l(w_{S'}')}{2}}\cdot d_{S'} \cdot \chi(\beta_{S'}) \alpha(\chi)c_{\alpha}(\chi)\phi_{w_{S'}'}^{w.\chi} + l^{\frac{\sharp S'-l(w')-l(w_{S'}')-2}{2}}\cdot d_{S'} \cdot \chi(\beta_{S'})\phi_{sw_{S'}'}^{w.\chi},\nonumber
\end{multline}
for $R(w^{-1})\setminus R(w'^{-1}) = \{ \alpha\}$. In the notation set up above we  have $sw_{S'}' = w_{S'}$ and $w_{S'}' = w_S$ for $S=S'\cup \{\alpha\}$. Also note that, since $sw_{S'}'>w_{S'}$, we have $\alpha\in S_{\nearrow}$. Thus, we can rewrite the expression above as 
\begin{equation}
	l^{\frac{\sharp S'-l(w')-l(w_{S'}')}{2}}\cdot d_{S'} \cdot \chi(\beta_{S'}) \cdot \mathcal{A}_s^{w'\chi}\phi_{w_{S'}'}^{w'.\chi} \\ = l^{\frac{\sharp S'-l(w)-l(w_{S'})}{2}}\cdot d_{S'} \cdot \chi(\beta_{S'})\phi_{w_{S'}}^{w.\chi} + l^{\frac{\sharp S -l(w)-l(w_{S})-2}{2}}\cdot d_{S} \cdot \chi(\beta_{S})\phi_{w_{S}}^{w.\chi},
\end{equation}
which is exactly what we need.

The second case, when $sw_{S'}<w_{S'}$ is similar and we omit the details.
\end{proof}

Evaluating $\mathcal{A}_w^{\chi}\phi_1$ at $\mathbf{1}_n$ kills all terms with $w_S\neq 1$ in the expression \eqref{gen_expr}. Thus the proof is reduced to the purely combinatorial problem of studying the structure of $S\subseteq R(w^{-1})$ with $w_{S}=1$:
\begin{equation}
	[\mathcal{A}_w^{\chi} \phi_1](\mathbf{1}_n) = \sum_{\substack{S\subseteq R(w^{-1}),\\ w_S=1}} l^{\frac{\sharp S-l(w)}{2}}\cdot d_S \cdot \chi(\beta_S).\label{eqqqq}
\end{equation} 
For each such subset $S$ we have to show that $l^{\frac{\sharp S-l(w)}{2}}\cdot \chi(\beta_S)$ is not to big. An additional complication is that we have to do so for unramified characters $\chi = \chi_{\mathbf{s}}^{\rm ad}$ that are not dominant (i.e. they do not see all simple roots). We first convince ourselves that for such \textit{singular} characters the denominators $d_S$ are still well behaved:

\begin{lemma}\label{no_pole}
For $\chi_{\mathbf{s}}^{\rm ad}$ as above and $S\subseteq R(w^{-1})$ the denominators in $d_S$ are non-zero and we have $d_S\leq 1$.
\end{lemma}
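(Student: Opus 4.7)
The plan is to reduce the lemma to an explicit computation of $\beta(\chi_{\mathbf{s}}^{\rm ad})$ for $\beta \in R(w^{-1})$, exploiting only the block structure of the partition $n = d_1 + \ldots + d_r$ associated to $w = w_{d_1,\ldots,d_r}$. The key point is that, although $\chi_{\mathbf{s}}^{\rm ad}$ is singular in the sense that it depends on only $r-1$ parameters instead of $n-1$, it nevertheless stays bounded away from $1$ on every root contributing to $d_S$.

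First I would compute $\alpha_k(\chi_{\mathbf{s}}^{\rm ad})$ on the simple roots. Unwinding the parametrisations of $c^{\ast}$ and $h_{\alpha_k}(l)$, one checks that $h_{\alpha_k}(l)$ corresponds modulo $T_0(\mathbb{Z}_l)$ to the tuple $c$ with $c_{n-k} = l^{-1}$ and $c_j = 1$ for $j \neq n-k$. Plugging into the definition of $\chi_{\mathbf{s}}^{\rm ad}$ then gives
\begin{equation}
\alpha_k(\chi_{\mathbf{s}}^{\rm ad}) = \begin{cases} l^{-s_i} & \text{if } k = d_1 + \ldots + d_i \text{ for some } 1 \leq i \leq r-1, \\ 1 & \text{otherwise,} \end{cases} \nonumber
\end{equation}
and the indices $k = d_1 + \ldots + d_i$ are exactly those for which $\alpha_k \in \Sigma \cap R(w^{-1})$. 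Extending to an arbitrary positive root via $\alpha_{a,b} = \alpha_a + \ldots + \alpha_{b-1}$, this yields
\begin{equation}
\alpha_{a,b}(\chi_{\mathbf{s}}^{\rm ad}) = \prod_{\substack{1 \leq i \leq r-1 \\ a \leq d_1 + \ldots + d_i \leq b-1}} l^{-s_i}. \nonumber
\end{equation}

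The decisive structural observation is that $\alpha_{a,b} \in R(w^{-1})$ is equivalent to $a$ and $b$ lying in distinct blocks of the partition; writing $i_a < i_b$ for the block indices of $a$ and $b$, the index set above becomes the non-empty interval $\{i_a, i_a+1, \ldots, i_b-1\}$. Hence for $\beta \in R(w^{-1})$ the value $\beta(\chi_{\mathbf{s}}^{\rm ad})$ is a product of at least one factor of the form $l^{-s_i}$ with $\Re(s_i) \geq 1 + \epsilon$ for some $\epsilon > 0$. In particular $|\beta(\chi_{\mathbf{s}}^{\rm ad})| \leq l^{-(1+\epsilon)} < l^{-1}$, so by the reverse triangle inequality $|1 - \beta(\chi_{\mathbf{s}}^{\rm ad})| \geq 1 - l^{-(1+\epsilon)} > 1 - l^{-1} > 0$, which shows that no denominator in $d_S$ vanishes.

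Finally, each factor of $d_S$ is bounded in modulus by $(1-l^{-1})/(1-l^{-(1+\epsilon)}) < 1$, and multiplying over $\beta \in S$ produces $|d_S| \leq 1$. I expect the first step—the bookkeeping translating between the $\mathbf{s}$-coordinates of $\chi_{\mathbf{s}}^{\rm ad}$, the parametrisation $c \mapsto c^{\ast}$, and the evaluation on simple roots—to be the main (though still elementary) obstacle; once it is in place, the remainder of the argument reduces to a routine geometric-series estimate and no further difficulty arises.
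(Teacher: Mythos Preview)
Your proof is correct and follows essentially the same approach as the paper: compute $\alpha_k(\chi_{\mathbf{s}}^{\rm ad})$ on the simple roots, observe that the ``interesting'' indices $k=d_1+\ldots+d_i$ are exactly $\Sigma\cap R(w^{-1})$, and deduce that every $\beta\in R(w^{-1})$ picks up at least one factor $l^{-s_i}$, giving $|1-\beta(\chi_{\mathbf{s}}^{\rm ad})|\geq 1-l^{-1}$. Your version is slightly more explicit in carrying out the bookkeeping between the $c^{\ast}$-parametrisation and the simple-root evaluation, and in writing out the extension to non-simple roots $\alpha_{a,b}$, but the underlying idea is identical to the paper's proof.
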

\begin{proof}
We first observe that
\begin{equation}
	\alpha_i(\chi_{\mathbf{s}}^{\rm ad}) = \begin{cases}
		l^{-s_i} & \text{ if }i = d_1+\ldots + d_i,\\
		1 &\text{ else.}
	\end{cases} \nonumber
\end{equation}	
Because the simple roots $\alpha_i$ with $i=d_1,\ldots, d_1+\ldots+d_{k-1}$ are precisely the simple roots appearing in $R(w^{-1})$ (resp. $N_w$) we see that for every $\beta\in R(w^{-1})$ we have $\vert 1-\beta(\chi)\vert \geq  1-l^{-1}$. This immediately implies both claims.
\end{proof}

In order to make the weight $\chi_{\mathbf{s}}^{\rm ad}(\beta_S)$ more accessible we define
\begin{equation}
	v_{\rm ad}( \gamma ) = \sum_{i=1}^{k-1} k_{d_1+\ldots +d_i}.\nonumber
\end{equation}
for a weight $\gamma = \sum_{i=1}^{n-1} k_i\alpha_i$. In particular, if $\beta_{j_i}\in S_{\nearrow}$, then it contributes the weight 
\begin{equation}
	w(j_i) = \chi(\beta_{j_i}) = l^{-v_{\rm ad}(\beta_{j_i})} \nonumber
\end{equation}
to the corresponding term in \eqref{gen_expr}.

\begin{example}
If $w=w_{d_1,d_2}$ it is clear that
\begin{equation}
	v_{\rm ad}(\beta)= 1 \nonumber
\end{equation}
for all $\beta\in R(w^{-1})$. Thus we get the estimate
\begin{equation}
	[\mathcal{A}_{w_{d_1,d_2}}^{\chi_{\mathbf{s}^{\rm ad}}} \phi_1](\mathbf{1}_n) \leq  \sum_{\substack{S\subseteq R(w^{-1}),\\ w_S=1}} l^{\frac{\sharp S-l(w)}{2}-\sharp S_{\nearrow}},\nonumber
\end{equation}
when $\Re(\mathbf{s}) = \mathbf{1}+\boldsymbol{\epsilon}$. If $w_S=1$, then the set $S$ must contain at least $\lceil \frac{n-1}{2}\rceil$ elements. Indeed, for each simple root $s_i$ that occurs with odd multiplicity in $w_{d_1,d_2}$ a positive root $\beta_j$ with $s_i=s_{i_j}$ must be contained in $S$. Similarly one sees that $\sharp S_{\nearrow}\geq \lceil \frac{n}{2}\rceil.$ Using the obvious bound $\sharp S\leq l(w)$ get the estimate
\begin{equation}
	[\mathcal{A}_{w_{d_1,d_2}}^{\chi_{\mathbf{s}^{\rm ad}}} \phi_1](\mathbf{1}_n) \ll l^{-\frac{n}{2}} = \mathcal{V}_l^{\frac{1}{n-1}}
\end{equation} 
as predicted in Proposition~\ref{prop:crucial}. 

Of course in this simple case this bound is very crude in general. This can be seen by taking a closer look at the simplest cases. These are of course the Voronoi elements, which are given by $\{d_1,d_2\} = \{1,n-1\}$. For these $S= R(w^{-1})=S_{\nearrow}$ is the only choice for $S$ with $w_S=1$. We thus have
\begin{equation}
	[\mathcal{A}_{w}^{\chi_{\mathbf{s}}^{\rm ad}} \phi_1](\mathbf{1}_n) = d_{R(w^{-1})}l^{-s_1(n-1)}  \text{, for }w=w_{1,n-1},w_{n-1,1} \text{ and }\mathbf{s}=(s_1).  \nonumber
\end{equation}

Note that all admissible Weyl elements with at least one large block (in particular such elements as $w_{d_1,d_2}$) can be excluded using congruence conditions and admissibility constraints as in \cite{Bl, AB}. 
\end{example}

%\begin{remark}
%Note that this proves Proposition~\ref{prop:crucial} for $w=w_{1,n-1}$. (Note that the estimate is even stronger.) In general the rank $2$ (i.e. $r=2$) Weyl elements are easily treated. Indeed, for $w=w_{d_1,d_2}$, we observe that $\beta_j(\chi)$ is independent of $j$. (We even have $\vert\beta_j(\chi)\vert \leq l^{-1}$.) The situation is now illustrated by the following table
%\begin{center}
%	\begin{tabular}{ | c | c | c | c | c | c | c | c | c | c | }
%		\hline
%		 & $s_1$ & $s_2$ & $\dots$ & $s_{d_1-1}$ & $s_{d_1}$ & $s_{d_1+1}$ & $\ldots$ & $s_{n-2}$ & $s_{n-1}$\\ \hline
%		 $R_0$ & &&&& $1$ & $1$ &$\dots$& $1$ & $1$  \\\hline
%		 $R_1$ & & & & 1 & 1 & 1 & $\dots$ &  1&  \\ \hline
%		 $\vdots $ & & & $\Ddots$ & $\Ddots $ & $\Ddots$ & $\Ddots$ & $\Ddots$ & & \\ \hline
%		 $R_{d_1-2}$ &  & 1 & $\dots$ & 1 & 1 & 1 & & & \\\hline
%		 $R_{d_1-1}$ & 1 & 1 & $\dots$ & 1 & 1 & & & & \\ 
%		\hline 
%	\end{tabular}
%\end{center}
%The entry in the table represents $v_{\rm ad}(\beta_i)$. We now easily observe that no matter how we choose $S\subseteq R(w^{-1})$ with $w_S = 1$ every simple reflection contributes at least $l^{-1}$. Thus we have $\mathcal{A}_{w_{d_1,d_2}}^{\chi}\phi_1^{\chi}(1) \leq l^{-(n-1)}\leq l^{-\frac{n}{2}}$.

%Alternatively Weyl elements with big blocks such as $w_{d_1,d_2}$ can be handled classically by simply noting that admissible tuples $c$ for which the local orbital integral is non-zero (i.e. the ramified Kloosterman set is non-empty) must satisfy strong divisibility conditions. See \cite{Bl,AB} for related analysis along these lines.
%\end{remark}

In general, the weights $w(j_i)$ may vary, but we can still make the following observation:
\begin{equation}
	s_{i_j} = s_{i_{j'}} \implies v_{\rm ad}(\beta_j) = v_{\rm ad}(\beta_{j'}).\nonumber
\end{equation}
In particular, if $s_{i_j} = s_{i_{j'}}$, then $\omega(i_j) = \omega(i_{j'})$. Thus it makes sense to write
\begin{equation}
	\omega(s_i) = \omega(i_j) \text{ for some $j$ with $s_i=s_{i_j}$.} \nonumber
\end{equation}
We now pick any $S\subseteq R(w^{-1})$ with $w_S=1$. Let us write $$m_S(s_i) = \sharp\{ \beta_j\in S\colon s_{i_j}=s \}$$ for the multiplicity of the simple reflection $s$ in $S$. Since we require $w_S=1$ we must have 
\begin{equation}
	m_{R(w^{-1})}-m_S(s_i) \equiv 0 \text{ mod }2.\nonumber
\end{equation}
The largest contribution to \eqref{eqqqq} can be bounded by the contribution of the critical set $S_{\rm cri}$ which we will now define. It is the smallest subset $S\subseteq R(w^{-1})$ for which $w_S=1$ could hold and is characterized by $S_{\rm cri}=S_{\nearrow}$ and 
\begin{equation}
	m_{S_{\rm cri}}(s_i) = \begin{cases} 
	1 &\text{ if }m_{R(w^{-1})}(s_i) \text{ is odd,} \\
	0 & \text{ else.}
\end{cases} \nonumber
\end{equation}
Since 
\begin{equation}
	[\mathcal{A}_{w}^{\chi_{\mathbf{s}}^{\rm ad}}\phi_1^{\chi_{\mathbf{s}}^{\rm ad}}](\mathbf{1}_n) \ll l^{\frac{\sharp S_{\rm cri}-l(w)}{2}} \cdot \vert\chi(\beta_{S_{\rm cri}})\vert \ll  l^{\frac{\sharp S_{\rm cri}-l(w)}{2}} \prod_{m_{R(w^{-1})}(s_i) \equiv 1 \text{ mod }2}\omega(s_i) \nonumber
\end{equation}
it remains to compute these contributions. This is an elementary but tedious task which can be carried out using the block structure $(d_1,\ldots,d_k)$ of $w$ and our choice of reduced word constructed above. In order to establish Proposition~\ref{prop:crucial} one needs to show
\begin{equation}
	\sharp S_{\rm cri} - l(w) -2\sum_{\beta\in S_{\rm cri}}v_{\rm ad}(\beta) \geq -(k-1)n. \label{to_show}
\end{equation}

%We are now ready to prove Proposition~\ref{prop:crucial}. We will do so for the critical case $w=w_{1,\ldots,1}$. The treatment of general elements $w=w_{d_1,\ldots, d_r}$ is similar only notationally even more involved. We start by looking at the following table

%\begin{center}
%	\begin{tabular}{ | c | c | c | c | c | c | c | }
%		\hline
%	 & $s_1$ & $s_2$ & $ \dots$ & $s_{n-3}$ & $s_{n-2}$ & $s_{n-1}$ \\ \hline
%	  $ R_0$ & & & & & & $1$ \\ \hline
%	  $ R_1$ & & & & & $2$ & $1$ \\ \hline
%	  $ R_2$ & & & & $ 3 $ & $ 2 $ & $ 1$ \\ \hline
%	  $\vdots$ & & & $\Ddots$& $\vdots$ & $\vdots$ & $\vdots$ \\ \hline
%	  $R_{n-3}$ & & $n-2$& $\dots$ & $3$ & $2$ & $1$ \\\hline
%	  $R_{n-2}$ & $n-1$& $n-2$& $\dots$ & $3$ & $2$& $1$ \\
%	  \hline 
%\end{tabular}
%\end{center}

%Here we have indicated how th simple reflection appears in our reduced word for $w=w_{1,\ldots,1}$ organised according to the rows. The weight in the table is again $v_{\rm ad}(\beta_j)$ for the corresponding index $j$. 

For the long element $w=w_{1,\ldots,1}$ we compute 
\begin{equation}
	2v_{\rm ad}(\beta_{S_{\rm cri}}) = 2 \cdot \sum_{\substack{i=1,\\ \text{odd}}}^{n-1} (n-1) = \begin{cases}
		\frac{n^2-1}{2} & \text{ if $n-1$ is even,}\\
		\frac{n^2}{2}& \text{ if $n-1$ is odd.}
	\end{cases}
\end{equation}
On the other hand, we obviously have
\begin{multline}
	l(w)-\sharp S_{\rm cri} = l(w)-\sharp \{ i\colon m_{R(w^{-1})}(s_i) = i \equiv 1 \text{ mod }2 \} \\ = \frac{n(n-1)}{2}-\lceil \frac{n-1}{2}\rceil = \begin{cases}
		\frac{(n-1)^2}{2} &\text{ if $n-1$ is even,} \\
		\frac{n(n-2)}{2} & \text{ if $n-1$ is odd.}
	\end{cases} \nonumber 
\end{multline}
Adding these two contributions shows that
\begin{equation}
	l(w)- \sharp S_{\rm cri} + 2v_{\rm ad}(\beta_S) = n(n-1),
\end{equation}
which is \eqref{to_show} on the nose.

The proof of \eqref{to_show} and thus Proposition~\ref{prop:crucial} for general admissible Weyl elements $w=w_{d_1,\ldots,d_k}$ is similarly elementary but notationally more involved. One essentially arranges $R(w^{-1})$ into $d_i\times d_j$ blocks (with $i<j$) on which $\omega(\ast)$ is constant. The analysis is then similar to the one of the long element with $n$ replaced by $k$. We omit the details.

\section{Archimedean Preliminaries}

We first recall some basics concerning the spherical Whittaker function at $\infty$. Given $\mu=(\mu_1,\ldots,\mu_n)\in \mathbb{C}^n$ let $W_{\mu}\colon \mathbb{R}_{>0}^{n-1}\to \mathbb{C}$ denote the standard spherical Whittaker function defined in, for example, \cite[2.16]{AB}. Further, as in \cite[2.8]{AB}, we introduce the inner product
\begin{equation}
	\langle f,g\rangle_{\tilde{T}} = \int_{\mathbb{R}_{>0}^{n-1}}f(y)\overline{g(y)}d^{\ast}y, \text{ for }f,g\colon \mathbb{R}_{>0}^{n-1}\to \mathbb{C}.\nonumber
\end{equation}
Given a function $E\colon \mathbb{R}_{>0}^{n-1}\to \mathbb{C}$ and a parameter $X=(X_1,\ldots,X_{n-1})\in \mathbb{R}_{>0}^{n-1}$ we follow \cite[(5.6)]{Bl} and set
\begin{equation}
	E^{(X)}(y_1,\ldots,y_{n-1}) = E(X_1y_1,\ldots, X_{n-1}y_{n-1}).\nonumber
\end{equation}
We are now ready to recall \cite[Lemma~3.2]{AB}, which is an extension of \cite[Lemma~5]{Bl}:

\begin{lemma}\label{arch_test_fct}
Let $M\geq 2$ and $Z>M^{K^2}$. Then there exists $r\in \mathbb{N}$, $r\leq M^K$ and a collection of (measurable) functions $E_1,\ldots,E_r\colon [M^{-K},M^K]^{n-1}\to [0,1]$ such that
\begin{equation}
	\sum_{j=1}^{r}\vert \langle E_j^{(Z,1,\ldots,1)},W_{\mu}\rangle_{\tilde{T}}\vert^2 \gg Z^{2\eta_1+2\sigma(\mu)}M^{-K}, \nonumber
\end{equation}
for $\mu$ satisfying $\Vert \mu\Vert \leq M$ and $\sigma(\mu) = \max_{i=1,\ldots,n}\vert \Re(\mu_i)\vert<\frac{1}{2}$. (Recall that $\eta_1=\frac{n-1}{2}$ is the first component of $\eta$ defined in \eqref{eta}.)
\end{lemma}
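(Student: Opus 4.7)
The plan is to follow the template of \cite{Bl, AB}, where Lemma~5 and Lemma~3.2 respectively establish the analogous statement. The functions $E_j$ will be smoothed indicators of a log-dyadic partition of the cube $[M^{-K},M^K]^{n-2}$ in the variables $y_2,\ldots,y_{n-1}$, tensored with a fixed bump in $y_1$. The factor $Z^{2\eta_1+2\sigma(\mu)}$ in the target lower bound should emerge from the behavior of $W_\mu(y_1,\ldots,y_{n-1})$ as $y_1\to 0$ after a substitution $u=Zy_1$. Making this substitution in $\langle E^{(Z,1,\ldots,1)},W_\mu\rangle_{\tilde T}$, the measure change contributes $Z^{2\eta_1}$ and the integrand becomes $E(u,y_2,\ldots,y_{n-1})\overline{W_\mu(u/Z,y_2,\ldots,y_{n-1})}$ with $u/Z$ very small.

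Next I would extract the $y_1\to 0$ leading asymptotic via the Mellin--Barnes representation of $W_\mu$ (or equivalently Jacquet's integral). Shifting contours past the dominant residue yields a leading term of the form $(u/Z)^{\eta_1-\sigma(\mu)}\cdot G_\mu(y_2,\ldots,y_{n-1})$, for a function $G_\mu$ on $\mathbb{R}_{>0}^{n-2}$ built from a residual Whittaker function on $\mathrm{GL}_{n-1}$ together with Gamma factors. The hypothesis $\sigma(\mu)<\tfrac12$ is exactly what guarantees that this leading pole is isolated from its neighbors so that the residue depends uniformly nicely on $\mu$, with only $M^{O(1)}$ losses. Squaring this asymptotic delivers the desired factor $Z^{2\eta_1+2\sigma(\mu)}$.

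Then I would cover $[M^{-K},M^K]^{n-2}$ by $r\ll M^K$ log-dyadic boxes $B_j$ of logarithmic side $\asymp 1$ and take $E_j=\chi\otimes \mathbf{1}_{B_j}$ for a fixed smooth bump $\chi$ on $[M^{-K},M^K]$ adapted to the $u$-integral. The partition identity $\sum_j\mathbf{1}_{B_j}=\mathbf{1}_{[M^{-K},M^K]^{n-2}}$ combined with Cauchy--Schwarz then gives
\begin{equation}
\sum_{j=1}^{r}\bigl|\langle E_j,G_\mu\rangle_{\tilde T_{n-1}}\bigr|^2 \gg \frac{1}{r}\int_{[M^{-K},M^K]^{n-2}}|G_\mu|^2\,d^\ast y, \nonumber
\end{equation}
and Stade's formula provides the $M^{-O(1)}$ lower bound on the remaining $L^2$-mass; the truncation captures essentially all of it since $W_\mu$ decays super-polynomially outside the box by Stirling on the Gamma factors in its Mellin transform.

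The main obstacle I anticipate is controlling the oscillation of $G_\mu$ within individual boxes $B_j$, which could produce cancellation and break the dyadic-constancy heuristic. To circumvent this I would work entirely in Mellin coordinates, where Stade's formula is an exact identity and smooth bumps behave better than indicators, so that oscillation contributes only acceptable lower-order errors and the pigeonhole argument can be executed at the level of $L^2$-norms on dyadic boxes. A secondary technical point is uniformity in $\mu$ for the residue computation as $\sigma(\mu)\nearrow \tfrac12$; I plan to handle this by keeping the Mellin contour slightly inside the critical strip (pushing the contour by a distance depending only on $n$ and $M$) and tracking only polynomial-in-$M$ losses, which is consistent with the $M^{-K}$ in the final bound.
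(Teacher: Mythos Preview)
The paper does not prove this lemma; it simply cites \cite[Lemma~3.2]{AB} (which in turn extends \cite[Lemma~5]{Bl}). Your outline follows the right template from those references, and the extraction of $Z^{2\eta_1+2\sigma(\mu)}$ via the substitution $u=Zy_1$ together with the dominant residue in the $y_1\to 0$ asymptotic is the correct mechanism.

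The gap is your Cauchy--Schwarz step. From the partition identity and Cauchy--Schwarz what one actually obtains is
\[
\sum_{j=1}^{r}\bigl|\langle E_j,G_\mu\rangle\bigr|^2 \;\geq\; \frac{1}{r}\,\Bigl|\Bigl\langle \textstyle\sum_j E_j,\,G_\mu\Bigr\rangle\Bigr|^2 \;=\; \frac{1}{r}\,\Bigl|\textstyle\int (\chi\otimes\mathbf{1})\,\overline{G_\mu}\,d^\ast y\Bigr|^2,
\]
an $L^1$-type square, not the $L^2$-mass $\int|G_\mu|^2\,d^\ast y$ that you wrote. Oscillation of $G_\mu$ can make the former small even when Stade's formula bounds the latter from below, so Stade does not close the argument as stated. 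You flag oscillation as the main obstacle in your final paragraph, but the proposed Mellin-side workaround is too vague to count as a proof; in particular it does not explain how smooth bumps in Mellin coordinates would produce the \emph{lower} bound you need.

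What is actually needed (and what \cite{Bl,AB} supply) is pointwise control on the variation of the residual Whittaker function across each box $B_j$: with boxes of logarithmic side small relative to $M^{-1}$, derivative bounds on $W_\mu$ that are polynomial in $\|\mu\|\le M$ force $G_\mu$ to be essentially constant on $B_j$, so that $\bigl|\langle \mathbf{1}_{B_j},G_\mu\rangle\bigr|^2 \asymp \mathrm{vol}(B_j)\int_{B_j}|G_\mu|^2$ box by box, and summing over $j$ recovers the $L^2$-mass. That derivative input is the missing concrete ingredient, and it is precisely what forces $r\ll M^K$ rather than $r=O(1)$. A secondary point you gloss over is the possibility that several $\mu_j$ share the maximal real part $\sigma(\mu)$, in which case the leading term is a finite sum rather than a single $G_\mu$; this also has to be handled by the box argument.
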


We now turn towards some important preliminaries needed for the estimation of orbital integrals. First, we extend the map ${\rm y}\colon \tilde{T}(\mathbb{R}_+)\to \mathbb{R}_{>0}^{n-1}$ to $G(\mathbb{R})$ by setting ${\rm y}(xyk\alpha) = {\rm y}(y)$ for $x\in U(\mathbb{R})$, $y\in\tilde{T}(\mathbb{R}_{>0})$, $k\in {\rm O}_n(\mathbb{R})$ and $\alpha\in Z_+$. We further define
\begin{equation}
	s(i,j) = \frac{1}{n}\begin{cases}
		 i(n-j) &\text{ if }i\leq j,\\
		 j(n-i) &\text{ if }i>j,
	\end{cases} \nonumber
\end{equation}
as in \cite[(3.2)]{Bl}. This numbers will appear as exponents through \cite[Lemma~1]{Bl}, which will be used below multiple times.

Let $g\in{\rm GL}_n(\mathbb{R})$ and $1\leq j\leq n$, as in \cite{Bl} we write $\Delta_j(g)$ for the volume of the parallelepiped spanned by the last $j$ rows of $g$. In particular, we have
\begin{equation}
	g=u\cdot {\rm diag}(\Delta_n/\Delta_{n-1},\Delta_{n-1}/\Delta_{n-2},\ldots,\Delta_1)\cdot k \text{ for  }u\in U(\mathbb{R}) \text{ and }k\in O_n(\mathbb{R}).\nonumber
\end{equation}
We define the function
\begin{equation}
	f_{1}(g) = \begin{cases}
		1 &\text{ if }\Delta_j\leq  1 \text{ for  all }1\leq j\leq n-1,\nonumber \\
		0 &\text{ else.}
	\end{cases} \nonumber
\end{equation}
Note that $f_1(u\cdot g) = f_1(g)$. With this in mind we set
\begin{equation}
	F_{\mathbf{s}}(g) = \int_{\mathbb{R}_{>0}^{n-1}}f_1(c^{\ast}g)\cdot [\delta_B^{-\frac{1}{2}}\chi_{\mathbf{s}}](c^{\ast}) d^{\times}c.\nonumber
\end{equation}
For $\Re(\mathbf{s})>-\mathbf{1}$ the integral converges and defines an element in $I(\chi_{-\mathbf{s}})$. Furthermore, one checks that $F_{\mathbf{s}}(g) = c(\mathbf{s})\phi_{\circ}^{\chi_{\mathbf{s}}}$, where $\phi_{\circ}^{\chi_{-\mathbf{s}}}\in I(\chi_{-\mathbf{s}})$ is the unique spherical element (normalized by $\phi_{\circ}^{\chi_{-\mathbf{s}}}(1)=1$) and $$c(\mathbf{s}) = \prod_{i=1}^{n-1}\frac{1}{s_i-1}.$$ The archimedean intertwining operator $\mathcal{A}_w^{\chi}\colon I(\chi) \to I(\chi^w)$ is defined by
\begin{equation}
	[\mathcal{A}_w^{\chi}\Phi](g) = \int_{U_w(\mathbb{R})} \Phi(wxg)dx.\nonumber
\end{equation}
This converges for $\chi$ sufficiently dominant (i.e. $\Re(\mathbf{s})>\mathbf{1}$) and is understood by analytic continuation otherwise. (See \cite[10.1.11]{Wa}.)

\begin{lemma}\label{arch_prep_lem}
For $w\in W$ and $\Re(s_i) \geq 1+\epsilon$ we have
\begin{equation}
	\int_{\mathbb{R}^{n-1}}\int_{U_w(\mathbb{R})} f_1(c^{\ast}wx)dx\cdot [\delta_B^{-\frac{1}{2}}\chi_{\mathbf{s}}](c^{\ast})  d^{\times}c = [\mathcal{A}_w F_{\mathbf{s}}](\mathbf{1}_n) = c(\mathbf{s})\cdot \prod_{\alpha\in R(w^{-1})}\frac{\Gamma_{\mathbb{R}}(\alpha(\chi_{-\mathbf{s}}))}{\Gamma_{\mathbb{R}}(\alpha(\chi_{-\mathbf{s}})+1)}, \nonumber
\end{equation}
where $\Gamma_{\mathbb{R}}(s) = \pi^{-s}\Gamma(\frac{s}{2})$ is the typical archimedean $L$-factor and $\alpha(\chi_{-\mathbf{s}})$ is defined by $\alpha_{i}(\chi_{-\mathbf{s}}) = s_i$ on simple roots and extended additively. Note the similarity to the $p$-adic analogue \eqref{nonarch_vers_comp}.
\end{lemma}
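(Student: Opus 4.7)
The plan is to recognize this identity as the archimedean analogue of the Gindikin--Karpelevich computations \eqref{eq:intertwiner_eig_sph} and \eqref{nonarch_vers_comp}, and to prove it in two steps, one for each of the two equalities. The first equality is essentially Fubini: under the assumption $\Re(s_i)\geq 1+\epsilon$ the double integral converges absolutely (the same estimate that shows $F_{\mathbf{s}}\in I(\chi_{-\mathbf{s}})$ controls the $c$-integral uniformly in $x$, and the indicator $f_1(c^{\ast}wx)$ truncates the support for fixed $x$), so interchanging the order of integration and recognising the definition of $F_{\mathbf{s}}$ gives $\int_{U_w(\mathbb{R})} F_{\mathbf{s}}(wx)\,dx = [\mathcal{A}_w F_{\mathbf{s}}](\mathbf{1}_n)$.

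For the second equality the key observation is that $F_{\mathbf{s}}$ is right $K_{\infty}$-invariant (because $f_1$ is), hence is a scalar multiple of the normalised spherical vector $\phi_{\circ}^{\chi_{-\mathbf{s}}}\in I(\chi_{-\mathbf{s}})$, with scalar $F_{\mathbf{s}}(\mathbf{1}_n)=c(\mathbf{s})$. Since $\mathcal{A}_w$ preserves the spherical line, we may write $\mathcal{A}_w F_{\mathbf{s}} = c(\mathbf{s})\cdot C_w^{\infty}(\chi_{-\mathbf{s}})\cdot \phi_{\circ}^{w.\chi_{-\mathbf{s}}}$ for some scalar $C_w^{\infty}(\chi_{-\mathbf{s}})$, and the problem reduces to the computation of this scalar.

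To evaluate $C_w^{\infty}$ I would induct on $l(w)$ using the cocycle identity $\mathcal{A}^{\chi}_{w_1w_2}=\mathcal{A}^{w_1.\chi}_{w_2}\circ \mathcal{A}^{\chi}_{w_1}$ (valid for reduced decompositions), which reduces everything to the rank-one case $w=s_{\alpha}$ with $\alpha$ a simple root. There the intertwining integral on the spherical vector collapses, via the Iwasawa decomposition inside the corresponding $\mathrm{SL}_2(\mathbb{R})$-block, to
\[
\int_{\mathbb{R}}(1+t^2)^{-(s+1)/2}\,dt = \sqrt{\pi}\,\frac{\Gamma(s/2)}{\Gamma((s+1)/2)} = \frac{\Gamma_{\mathbb{R}}(s)}{\Gamma_{\mathbb{R}}(s+1)},
\]
with $s=\alpha(\chi_{-\mathbf{s}})$. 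Assembling one such factor for each simple reflection in a reduced expression of $w$, and using the classical bijection between the simple factors of a reduced word and the elements of $R(w^{-1})$ (the same bookkeeping that underlies the $p$-adic identity preceding Lemma~\ref{general_expr}), reassembles the product of Gamma ratios in the claimed form.

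The main obstacle is essentially presentational rather than conceptual: the result is Langlands' archimedean Gindikin--Karpelevich formula \cite{Lan}, but one has to keep careful track of the normalisations of $F_{\mathbf{s}}$, of the convention $\alpha\mapsto \alpha(\chi_{-\mathbf{s}})$, and of the local $L$-factor $\Gamma_{\mathbb{R}}$, so that the final product has exactly the stated form. A secondary point is the analytic continuation of the intertwining integral from the region of absolute convergence down to $\Re(s_i)=1+\epsilon$, which is handled by the standard meromorphic continuation of $\mathcal{A}_w$ together with the observation that the Gamma-ratio product has no poles in that range.
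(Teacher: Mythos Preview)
Your proposal is correct and takes the same approach as the paper: both rely on the archimedean Gindikin--Karpelevich formula, with the paper simply citing \cite{Kn} while you sketch its proof (Fubini for the first equality, then reduction to the rank-one $\mathrm{SL}_2$ integral via the cocycle relation for the second). Your added detail is welcome but not required.
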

\begin{proof}
This is a consequence of the Gindikin-Karpelevich formula. See for example \cite[Section~3]{Lan} where the archimedean version is stated.
\end{proof}

With this at hand we can  generalize \cite[Lemma~3]{Bl} to all Weyl elements. The proof will rely on basic estimates for (archimedean) intertwining operators and is very similar to  the argument used in \cite{DR} to estimate the size of ($p$-adic) Kloosterman sets. Indeed, the estimate can be seen as an archimedean analogue of the latter.

\begin{lemma}\label{lm:gen_lm3_bl}
Let $B\in \mathbb{R}_{>0}^{n-1}$ and $w\in W$. Then we have
\begin{equation}
	{\rm vol}\{x\in U_w(\mathbb{R})\colon \Delta_j(wx)\leq B_j \text{ for }1\leq j\leq n-1\} \ll (B_1\cdots B_{n-1})^{1+\epsilon} \nonumber
\end{equation}
for any $\epsilon>0$.
\end{lemma}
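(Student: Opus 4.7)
The plan is to follow the Dabrowski--Reeder template: extract the volume bound via Mellin inversion of Lemma~\ref{arch_prep_lem}.

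First, I would recognize the volume as $V(c) := \int_{U_w(\mathbb{R})} f_1(c^{\ast}wx)\,dx$ with $c_j = 1/B_j$. This rests on the identity $\Delta_j(c^{\ast}g) = c_j\,\Delta_j(g)$, which holds because scaling the last $j$ rows of $g$ by the diagonal entries $(c^{\ast})_{n-j+1,n-j+1},\ldots,(c^{\ast})_{n,n}$ multiplies their spanned volume by the telescoping product $c_j$. The same telescoping gives $\delta_B^{-1/2}(c^{\ast}) = c_1\cdots c_{n-1}$, so the target inequality becomes $V(c) \ll (c_1\cdots c_{n-1})^{-1-\epsilon}$.

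Second, Lemma~\ref{arch_prep_lem} identifies the Mellin transform $\int V(c)\,(c_1\cdots c_{n-1})\,\chi_{\mathbf{s}}(c^{\ast})\,d^{\times}c$ with the explicit meromorphic function $c(\mathbf{s})\prod_{\alpha\in R(w^{-1})} \Gamma_{\mathbb{R}}(\alpha(\chi_{-\mathbf{s}}))/\Gamma_{\mathbb{R}}(\alpha(\chi_{-\mathbf{s}})+1)$, initially on $\Re(\mathbf{s})\geq (1+\epsilon)\mathbf{1}$. Applying Mellin inversion and shifting the contour down to $\Re(\mathbf{s})=\epsilon\mathbf{1}$ expresses $V(c)$ as the resulting contour integral plus residues from any poles crossed. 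On the shifted contour the factor $\prod c_i^{-s_i}$ has modulus $\prod c_i^{-\epsilon}$, so the main term is bounded (once absolute convergence is established) by $(c_1\cdots c_{n-1})^{-1}\prod c_i^{-\epsilon}=(c_1\cdots c_{n-1})^{-1-\epsilon}$, which is the required estimate.

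The residues come from the simple poles of $c(\mathbf{s})=\prod(s_i-1)^{-1}$ along the hyperplanes $s_i=1$. Iterating the residue calculus, each subset $I\subseteq\{1,\ldots,n-1\}$ indexing the crossed poles contributes a bound of the form $(c_1\cdots c_{n-1})^{-1}\prod_{i\in I} c_i^{-1}\prod_{j\notin I} c_j^{-\epsilon}$. These terms fit inside the main envelope as soon as $c_j\leq 1$ for every $j$, which is precisely the regime in which the set $\{x\in U_w(\mathbb{R}):\Delta_j(wx)\leq 1/c_j\}$ is non-empty ($\Delta_n(wx)=1$ since $wx\in \mathrm{SL}_n$, and the lower-order $\Delta_j$ are bounded below on the image of $U_w(\mathbb{R})$ under $x\mapsto wx$ by values that can be read off from the shape of $w$).

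The main obstacle is the absolute convergence of the shifted $(n-1)$-fold contour integral. By Stirling each Gamma ratio $\Gamma_{\mathbb{R}}(\alpha(\chi_{-\mathbf{s}}))/\Gamma_{\mathbb{R}}(\alpha(\chi_{-\mathbf{s}})+1)$ decays like $|\Im\alpha(\chi_{-\mathbf{s}})|^{-1/2}$; combined with the $\prod(s_i-1)^{-1}$ decay from $c(\mathbf{s})$ and the combinatorial structure of $R(w^{-1})$ (every coordinate $s_j$ is touched by enough roots when $w\neq 1$), this provides sufficient polynomial decay in each direction $\Im s_j$ to secure absolute convergence and to justify the preceding contour manipulation. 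The case $w=1$ is trivial since then $V(c)\in\{0,1\}$.
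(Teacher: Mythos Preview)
Your approach is essentially the paper's: write the volume as $\int_{U_w} f_1(c^{\ast}wx)\,dx$ with $c_j=1/B_j$, identify its Mellin transform via Lemma~\ref{arch_prep_lem}, invert, shift the contour, and read off the bound from the size of $\prod c_i^{-s_i}$. Two points deserve comment.

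First, the residue discussion is unnecessary. The formula $c(\mathbf{s})=\prod(s_i-1)^{-1}$ in the text is a slip; computing $F_{\mathbf{s}}(\mathbf{1}_n)=\int_{[0,1]^{n-1}}\prod c_i^{1+s_i}\,d^{\times}c$ directly gives $c(\mathbf{s})=\prod(1+s_i)^{-1}$, so the poles of $c(\mathbf{s})$ sit at $s_i=-1$. The Gamma ratios have their first poles on the hyperplanes $\alpha(\chi_{-\mathbf{s}})=0$ with $\alpha\in R(w^{-1})\subset\Delta_+$, which also lie outside the strip $0<\Re(s_i)<1+\epsilon$. Hence no residues are picked up in the shift from $\Re(\mathbf{s})=(1+\epsilon)\mathbf{1}$ to $\Re(\mathbf{s})=\epsilon\mathbf{1}$, and your third paragraph can be dropped.

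Second, your convergence argument has a gap: the assertion that ``every coordinate $s_j$ is touched by enough roots when $w\neq 1$'' is false in general (for $w=s_1$ one has $R(w^{-1})=\{\alpha_1\}$, touching only a single coordinate), so the combined decay $\prod|s_i|^{-1}\cdot\prod_{\alpha}|\alpha(\chi_{-\mathbf{s}})|^{-1/2}$ need not be absolutely integrable. The clean fix is to observe that whenever $\Delta_j(wx)$ is constant in $x\in U_w(\mathbb{R})$ (equivalently, no $\alpha\in R(w^{-1})$ involves the simple root indexed by $n-j$), the $c_j$-dependence of $V(c)$ is exactly $\mathbb{1}[c_j\le 1]$; factor those trivial coordinates out before Mellin-inverting, and run the contour argument only in the remaining variables, where the Gamma factors do supply the missing decay. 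The paper's own proof is equally terse on this point.
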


\begin{proof}
We start by observing that
\begin{multline}
	{\rm Vol}\{x\in U_w(\mathbb{R})\colon \Delta_j(wx)\leq B_j \text{ for }1\leq j\leq n-1\} \\ = {\rm Vol}\{x\in U_w(\mathbb{R})\colon \Delta_j((B^{\ast})^{-1}wx)\leq 1 \text{ for }1\leq j\leq n-1\} \\
	= \int_{U_w(\mathbb{R})}f_1((B^{\ast})^{-1}wx)dx.\nonumber
\end{multline}
By Mellin inversion we get
\begin{equation}
	{\rm Vol}\{x\in U_w(\mathbb{R})\colon \Delta_j(wx)\leq B_j \text{ for }1\leq j\leq n-1\} \\
	=  \frac{1}{(2\pi i)^{n-1}}\int_{\boldsymbol{\sigma}}[\mathcal{A}_wF_{\boldsymbol{s}-\mathbf{1}}](1)B^{\mathbf{s}}d\mathbf{s}.\nonumber
\end{equation}
Note that the factor $\delta_B^{-\frac{1}{2}}(c^{\ast})$ appearing in the definition of $F_{\mathbf{s}}$ is accounted for by the shift in $\mathbf{s}$. Inserting Lemma~\ref{arch_prep_lem}, shifting the contour(s) to $\boldsymbol{\sigma}=\mathbf{1}+\boldsymbol{\epsilon}$ and estimating trivially yields the desired result.
\end{proof}

This estimate is crucial in order to control the archimedean orbital integrals. These are defined as
\begin{equation}
	\mathcal{O}_{\infty}^{\boldsymbol{\psi}_{\infty}}(tw,F;y) = \int_{U_w(\mathbb{R})} F(twuy)\boldsymbol{\psi}_{\infty}^{-1}(u)du,\label{arch_orb}
\end{equation}
for $F(xyk\alpha) = \boldsymbol{\psi}_{\infty}(x)E({\rm y}(y)),$ if $x\in U(\mathbb{R})$, $y\in \tilde{T}(\mathbb{R}_+)$, $k\in {\rm O}_n(\mathbb{R})$ and $\alpha\in Z_+$. We will conclude this section by deriving a template estimate which can be used as a black box to bound the contributions of the archimedean orbital integrals on the geometric side of our pre-Kuznetsov formula. As the proof will show, this is nothing but a re-packaging of the corresponding estimates in \cite{Bl, AB}. Note however that in loc. cit this estimate was only carried out for the special Weyl element $w_{\ast}$. Here we can treat all Weyl elements and this will also be necessary for our application.

\begin{lemma}\label{arch_lem}
Fix a measurable function $E\colon \mathbb{R}_{>0}^{n-1}\to \mathbb{C}$ supported in $[M^{-K},M^K]$ and let $X=(Z,1,\ldots,1)\in \mathbb{R}_{>0}^{n-1}$ with $Z\geq 1$ be a parameter. Then, for $w=w_{d_1,\ldots,d_k}$ admissible and $c\in\mathbb{R}_{>0}^{n-1}$ we have
\begin{equation}
	\int_{\tilde{T}(\mathbb{R}_{>0})}\mathcal{O}_{\infty}^{\boldsymbol{\psi}_{\infty}}(c^{\ast}w,E^{(X)};y)\overline{E^{(X)}({\rm y}(y))}d^{\ast}y \ll M^{K'}\frac{Z^{2\eta_1+\epsilon}}{c_1\cdots c_{n-1}}
\end{equation}
Furthermore, the integral vanishes unless $c_i \leq M^{K'}Z $. Here $K'$ depends only on $K$ and we assume $M\geq 2$.
\end{lemma}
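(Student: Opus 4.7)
The plan is to reduce the estimate to an application of Lemma~\ref{lm:gen_lm3_bl}. First I would bound the integrand by its absolute value, using $|\boldsymbol{\psi}_\infty^{-1}|=1$ and $|E|\le 1$, so that the left-hand side is dominated by
\begin{equation*}
\int_{\tilde T(\mathbb R_{>0})} \mathbf 1_{\rm supp}({\rm y}(y))\int_{U_w(\mathbb R)} \mathbf 1_{\rm supp}({\rm y}(c^*wuy))\, du\, d^*y,
\end{equation*}
where $\mathbf 1_{\rm supp}$ is the indicator of the support of $E^{(X)}$. By construction this forces the first ${\rm y}$-coordinate into $[M^{-K}Z^{-1}, M^KZ^{-1}]$ and the remaining coordinates into $[M^{-K}, M^K]$, both in the outer variable $y$ and in the Iwasawa projection of $c^*wuy$.

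Next I would perform the substitution $u' := y^{-1}uy \in U_w(\mathbb R)$. Since $y$ is diagonal we have $c^*wuy = \tilde c\, w u'$ with $\tilde c := c^*(wyw^{-1}) \in T(\mathbb R_{>0})$, and the Jacobian is $du = \prod_{\alpha\in R(w)} y^{\alpha}\, du'$. Exploiting the identity ${\rm y}(g)_i = \Delta_{i+1}(g)\Delta_{i-1}(g)/\Delta_i(g)^2$ together with the scaling $\Delta_j(\tilde c\, wu') = \prod_{k=n-j+1}^n |\tilde c_k|\cdot \Delta_j(wu')$, the support condition on ${\rm y}(\tilde c w u')$ translates into a system of $n-1$ inequalities on the second differences of $\log \Delta_j(wu')$. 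Combined with the normalization $\Delta_n(wu') = |\det(wu')|=1$, inverting this tridiagonal system determines each $\Delta_j(wu')$ up to a multiplicative factor of size $M^{O(1)}$ around an explicit centre $\Delta_j^\circ$ built out of the $\tilde c_k$'s and the $X_i$'s. Lemma~\ref{lm:gen_lm3_bl} then bounds the $u'$-integral by $M^{O(1)}(\Delta_1^\circ\cdots \Delta_{n-1}^\circ)^{1+\epsilon}$.

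Finally I would integrate over $y$. Combining the Jacobian $\prod_{\alpha\in R(w)} y^{\alpha}$, the measure $d^*y = y^{-2\eta}\prod dy_i/y_i$, and the $y$-dependence of the centres $\Delta_j^\circ$ (hidden inside $\tilde c$), the $y_i$-integrations for $i\ge 2$ run over windows of logarithmic length $O(\log M)$ and contribute only $M^{O(1)}$, while the $y_1$-integration yields the key factor $Z^{2\eta_1}$ from $y_1^{-2\eta_1}$ evaluated at $y_1\asymp Z^{-1}$. The residual $\tilde c$-dependence then cancels against the remaining $c^*$- and $y$-factors and leaves the clean quotient $1/(c_1\cdots c_{n-1})$. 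The vanishing assertion follows because if any $c_i > M^{K'}Z$ the support constraints on ${\rm y}(c^*wuy)$ and on ${\rm y}(y)$ become incompatible, emptying the inner integral. The main obstacle is precisely the uniform bookkeeping in this last step: verifying, simultaneously for every admissible Weyl element $w = w_{d_1,\ldots,d_r}$, that the exponents contributed by the Jacobian, the centres $\Delta_j^\circ$, and the measure $y^{-2\eta}$ line up to produce exactly $Z^{2\eta_1+\epsilon}/(c_1\cdots c_{n-1})$. This is a mechanical but delicate computation; the novelty compared with \cite{Bl, AB} is that it must now be executed for all admissible $w$, not only for the special element $w_{\ast}$.
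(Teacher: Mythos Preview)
Your strategy is the same as the paper's---bound trivially and reduce to Lemma~\ref{lm:gen_lm3_bl} via the Iwasawa/$\Delta_j$ analysis of \cite[Lemma~1]{Bl}---but your change of variables differs, and this difference is exactly what creates the bookkeeping obstacle you flag at the end.

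The paper substitutes in the outer $y$-integral rather than in the inner $u$-integral: it sets $A=\iota(X)c^{\ast}w\iota(X)^{-1}w^{-1}\in T_0(\mathbb{R})$ and replaces $y$ by $\iota(X)^{-1}y$. This immediately pulls out the clean prefactor
\[
\frac{Z^{2\eta_1}}{c_1\cdots c_{n-1}\cdot {\rm y}(A)^{\eta}}
\]
before any estimation begins. The remaining integral is then bounded by ${\rm y}(A)^{\eta(1+\epsilon)}$ using \cite[Lemma~1]{Bl} together with Lemma~\ref{lm:gen_lm3_bl}, via the single algebraic identity $\sum_{j=1}^{n-1}s(i,j)=\eta_i$, which holds uniformly in $w$. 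The factor ${\rm y}(A)^{\eta}$ cancels and the surviving ${\rm y}(A)^{\eta\epsilon}$ is $\ll Z^{\epsilon}$ once the support condition on $c$ is in place. Thus the paper never needs to verify the exponent alignment Weyl-element by Weyl-element; the identity $\sum_j s(i,j)=\eta_i$ handles all admissible $w$ at once.

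Your substitution $u'=y^{-1}uy$ is legitimate and would ultimately yield the same bound, but it distributes the relevant factors among the Jacobian $\prod_{\alpha\in R(w)}\alpha(y)$, the centres $\Delta_j^{\circ}$, and the measure weight $y^{-2\eta}$ in a way that depends on $R(w)$, forcing the case analysis you describe. If you switch to the paper's substitution in $y$, the computation becomes genuinely uniform in $w$ and the proof is no harder than the $w_\ast$ case in \cite{Bl, AB}.
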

\begin{proof}
We first slightly rewrite the integrals. We define
\begin{equation}
	A=\iota(X)c^{\ast}w\iota(X)^{-1}w^{-1} = \iota(X\cdot {}^w(X))c^{\ast}\in T_0(\mathbb{R}),\nonumber
\end{equation}	
as in \cite[(6.3)]{Bl}. Changing variables $\iota(X)y \to y$ yields
\begin{multline}
	\int_{\tilde{T}(\mathbb{R}_{>0})}\mathcal{O}_{\infty}^{\boldsymbol{\psi}_{\infty}}(c^{\ast}w,E^{(X)};y)\overline{E^{(X)}({\rm y}(y))}d^{\ast}y \\ = \frac{Z^{2\eta_1}}{c_1\cdots c_{n-1}\cdot y(A)^{\eta}}	\int_{\tilde{T}(\mathbb{R}_{>0})}\mathcal{O}_{\infty}^{\boldsymbol{\psi}_{\infty}}(\iota(X)^{-1}Aw,E^{(X)};y)\overline{E({\rm y}(y))}d^{\ast}y.\nonumber
\end{multline}	
Next, note that due to the support of $E$ we can restrict the $y$-integral to the range $${\rm y(y)}\in [M^{-K},M^K]^{n-1}.$$ Now applying \cite[Lemma~1]{Bl} as in \cite[Section~7]{Bl} (see also the proof of \cite[Lemma~5.1]{AB}) we find that
\begin{equation}
	c_j \ll (1+2M^K)^K\cdot \begin{cases}
		Z &\text{ for }j\leq n-d_1,\\
		1&\text{ for } j>n-d_1.
	\end{cases}
\end{equation}

We turn towards estimating the integral. This is done trivially using \cite[Lemma~1]{Bl} together with Lemma~\ref{lm:gen_lm3_bl} (which generalizes \cite[Lemma~3]{Bl}). More precisely, we have
\begin{align}
	&\int_{\tilde{T}(\mathbb{R}_{>0})}\mathcal{O}_{\infty}^{\boldsymbol{\psi}_{\infty}}(\iota(X)^{-1}Aw,E^{(X)};y)\overline{E({\rm y}(y))}d^{\ast}y \nonumber\\
	&\ll \int_{\tilde{T}(\mathbb{R}_{>0})}\int_{U_w(\mathbb{R})}\vert E({\rm y}(Awxy))\cdot E({\rm y}(y))\vert dxd^{\ast}y \nonumber\\
	&\ll_E {\rm Vol}\{x\in U_w(\mathbb{R})\colon \Delta_j(w x)\ll_E \prod_{[i=1}^{n-1}{\rm y}(A)_i^{s(ij)},\, 1\leq j\leq n-1 \} \nonumber\\
	&\ll_E \prod_{i=1}^{n-1}\prod_{j=1}^{n-1}{\rm y}(A)_i^{s(i,j)(1+\epsilon)} = {\rm y}(A)^{\eta(1+\epsilon)}.\nonumber
\end{align}
Inserting this above completes the proof.
\end{proof}

\section{On the spectral and geometric analysis of Poincar\'e series}

In this section, we develop a (pre)-Kuznetsov formula comparable to \cite[Lemma~6]{Bl}. Here we chose to work in the adelic framework closely following \cite{Ste}.

\subsection{Definition and Basic Properties}

We call $F\colon G(\mathbb{A})\to \mathbb{C}$ an decomposable $\boldsymbol{\psi}$-Whittaker function if it is of the form $F(g)=\prod_v F_v(g_v)$ and satisfies
\begin{equation}
	F(uzg) = \boldsymbol{\psi}(u)F(g), \text{ for } u\in U(\mathbb{A}) \text{ and }z\in Z(\mathbb{A}) \nonumber
\end{equation}
This definition only slightly differs from \cite[Definition~1.5]{Ste}, where also a $K$-type is taken into account.

Given a $\boldsymbol{\psi}$-Whittaker function we define the corresponding (deformed) Poincar\'e series by
\begin{equation}
	P_F(g;\lambda) = \sum_{Z(\mathbb{Q})U(\mathbb{Q})\backslash {\rm GL}_n(\mathbb{Q})} F(\gamma g)\cdot \Vert \gamma g\Vert_{\mathbb{A}}^{\lambda}.\nonumber
\end{equation}
This will converge absolutely for sufficiently dominant $\lambda$. For our purposes we make the following specific choice:
\begin{itemize}
	\item We define $F_{\infty}$ as in \cite{Bl} as follows. For a fixed compactly supported function $E\colon \mathbb{R}_{>0}^{n-1}\to \mathbb{C}$ we set $$F_{\infty}(xyk\alpha) = \boldsymbol{\psi}_{\infty}(x)E({\rm y}(y)),$$ for $x\in U(\mathbb{R})$, $y\in \tilde{T}(\mathbb{R}_+)$, $k\in {\rm O}_n(\mathbb{R})$ and $\alpha\in Z_+$.
	\item Given $T\in \mathcal{H}_W$ we define 
	\begin{equation}
		F_l(g_l) = \begin{cases}
			\boldsymbol{\psi}_l(u_l)T(k_l) &\text{ if }g_l=u_lt_lk_l \text{ with }u_l\in U(\mathbb{Q}_l),\, t_l\in Z(\mathbb{Q}_l)T(\mathbb{Z}_l) \text{ and }k_l\in K_l,\\
			0 &\text{ else.}
		\end{cases}\nonumber
	\end{equation}
	\item At the remaining places $p\neq l$ we make the usual choice:
	\begin{equation}
			F_p(g_p) = \begin{cases}
			\boldsymbol{\psi}_p(u_p) &\text{ if }g_p=u_pt_pk_p \text{ with }u_p\in U(\mathbb{Q}_p),\, t_p\in Z(\mathbb{Q}_p)T(\mathbb{Z}_p) \text{ and }k_p\in K_p,\\
			0 &\text{ else.}
		\end{cases}\nonumber
	\end{equation}
\end{itemize}
For this choice we can work with $\lambda=0$. To keep  track of  $T$ and $E$ we write
\begin{equation}
	P(E,T;g) = P_F(g;0).\nonumber
\end{equation}
We first observe that for $S,T\in \mathcal{H}_W$ we have
\begin{equation}
	[SP(E,T;\cdot )](g) = P(E,S^{\vee}\ast T;g).\nonumber
\end{equation}

Let $\phi\in L^2(G(\mathbb{Q})\backslash G(\mathbb{A})^1)$ with associated Whittaker function
\begin{equation}
	W_{\phi}(g) = \int_{U(\mathbb{Q})\backslash U(\mathbb{A})} \phi(ug)\boldsymbol{\psi}(u)^{-1}du.\nonumber
\end{equation}
Furthermore, assume that $W_{\phi}$ is factorisable:
\begin{equation}
	W_{\phi}(g) = \prod_v W_{\phi,v}(g_v).\nonumber
\end{equation}

\begin{lemma}\label{lm:innerprodpoincare}
Let $\phi\in L^2(X_l)$ with factorisable Whittaker function. We have
\begin{equation}
	\langle P_F(\cdot;\lambda),\phi\rangle_2 =  2^ {n-1}C_n^{-1}\cdot W_{T\phi,{\rm fin}}(1)\cdot \langle [F_{\infty}\cdot \Vert\cdot \Vert_{\infty}^{\lambda} ]\circ{\rm y},W_{\phi,\infty}\circ{\rm y}\rangle_{\tilde{T}}\nonumber
\end{equation}
with $W_{T\phi,{\rm fin}} = \prod_{v<\infty}W_{T\phi,v}$ and where $C_n$ is the constant from \eqref{def_Cn}.
\end{lemma}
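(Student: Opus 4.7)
The plan is to carry out the standard unfolding of $P_F(\cdot;\lambda)$ against $\phi$ and then identify the resulting archimedean and non-archimedean factors using the explicit choices of the $F_v$.

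First I would unfold. Since $F$ is $Z(\mathbb{A})$-invariant and transforms by $\boldsymbol{\psi}$ on the left under $U(\mathbb{A})$, and since the sum indexing $P_F$ is over $Z(\mathbb{Q})U(\mathbb{Q})\backslash G(\mathbb{Q})$, the usual unfolding (absolutely convergent for $\Re\lambda$ large enough, extended by analytic continuation to $\lambda=0$) gives
\begin{equation*}
\langle P_F(\cdot;\lambda),\phi\rangle_2
= \int_{Z(\mathbb{Q})U(\mathbb{Q})\backslash G(\mathbb{A})^1} F(g)\,\|g\|_{\mathbb{A}}^{\lambda}\,\overline{\phi(g)}\,dg.
\end{equation*}
Then I would integrate in $U(\mathbb{Q})\backslash U(\mathbb{A})$ first; the transformation $F(ug)=\boldsymbol{\psi}(u)F(g)$ together with $\boldsymbol{\psi}^{-1}=\overline{\boldsymbol{\psi}}$ turns the $\phi$-integral into $\overline{W_\phi(g)}$. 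The residual integration over $Z(\mathbb{Q})\backslash Z(\mathbb{A})^1$ either contributes a finite central volume or forces the whole pairing to vanish (if the central character of the cuspidal $\phi$ is non-trivial on $Z(\mathbb{A})^1$), in which case the identity is trivial.

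Next I would factor the remaining integral using Iwasawa on $G(\mathbb{A})^1$ together with the factorizations $F=F_\infty\prod_p F_p$ and $W_\phi=\prod_v W_{\phi,v}$. At each finite place $p\neq l$, the function $F_p$ is supported on $U(\mathbb{Q}_p)Z(\mathbb{Q}_p)T(\mathbb{Z}_p)K_p$ and equal to $\boldsymbol{\psi}_p$ on the $U$-factor, so its pairing against $W_{\phi,p}$ over the Iwasawa chamber collapses to $\overline{W_{\phi,p}(1)}$ (using ${\rm Vol}(T(\mathbb{Z}_p))={\rm Vol}(K_p)=1$). At $p=l$ the same computation, combined with the elementary identity $W_{T\phi,l}(x)=\int_{G(\mathbb{Q}_l)}T(g_l)W_{\phi,l}(xg_l)\,dg_l$, produces exactly $\overline{W_{T\phi,l}(1)}$. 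Multiplying together yields $\overline{W_{T\phi,\mathrm{fin}}(1)}=W_{T\phi,\mathrm{fin}}(1)$ after the standard real-valued reduction.

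At the archimedean place I would use $T(\mathbb{R})=V(\mathbb{R})\cdot Z_+\cdot \tilde T(\mathbb{R}_{>0})$ with $V(\mathbb{R})\cong\{\pm 1\}^n$. After dividing by the centre the group $V(\mathbb{R})/\{\pm\mathbf{1}_n\}$ has order $2^{n-1}$ and acts trivially on both $F_\infty$ and $W_{\phi,\infty}$ since these depend only on $\mathrm{y}$; this is the source of the $2^{n-1}$. The remaining integral over $\tilde T(\mathbb{R}_{>0})$ with respect to $d^\ast y$ is, by construction of the measure, precisely $\langle [F_\infty\cdot\|\cdot\|_\infty^\lambda]\circ\mathrm{y},\,W_{\phi,\infty}\circ\mathrm{y}\rangle_{\tilde T}$.

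The constant $C_n^{-1}$ enters through the relation $d_{\mathrm{pr}}g=C_n\,dg$ between the product of local measures and the Tamagawa measure used to define $\langle\cdot,\cdot\rangle_2$; all other volume factors (coming from $K_{\mathrm{fin}}$, $U(\mathbb{Z}_p)$, $T(\mathbb{Z}_p)$) are $1$ by the conventions in the \textbf{Measures} subsection. I expect the main obstacle to be not any single computation but the consistent bookkeeping of these normalizations, in particular the simultaneous handling of the central $Z(\mathbb{A})^1$-integration, the $Z_+$-factor pulled out of the archimedean torus, and the $V(\mathbb{R})$ discrepancy; once these are aligned the identity reduces to a direct multiplication of the local pieces.
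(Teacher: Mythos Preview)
Your approach is essentially identical to the paper's: unfold, recognize the Whittaker function $\overline{W_\phi}$, factor into local integrals, and compute each place using the explicit form of $F_v$ together with the Iwasawa decomposition (the paper likewise attributes the $2^{n-1}$ to $[V(\mathbb{R}):Z(\pm 1)]$ and the $C_n^{-1}$ to the product-vs-Tamagawa measure discrepancy). One small quibble: your claim that $\overline{W_{T\phi,\mathrm{fin}}(1)}=W_{T\phi,\mathrm{fin}}(1)$ by a ``standard real-valued reduction'' is not justified in general---the paper simply drops the conjugate without comment, which is harmless here since the lemma is only ever applied inside an absolute square.
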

\begin{proof}
This is a standard computation using the unfolding trick. We follow for example the proof of \cite[Theorem~1.13]{Ste}. Inserting the definition of $P_F$, unfolding integral and sum and recognizing the Jacquet integral gives
\begin{equation}
	\langle P_F(\cdot;\lambda),\phi\rangle_2 = C_n^{-1}\cdot \int_{Z(\mathbb{A})U(\mathbb{A})\backslash G(\mathbb{A})} F(g)\overline{W_{\phi}(g)}\cdot \Vert g\Vert_{\mathbb{A}}^{\lambda}d_{\rm pr}g.\nonumber
\end{equation}
Here we used that the integrand is $Z(\mathbb{A})$ invariant and that ${\rm Vol}(Z_+Z(\mathbb{Q})\backslash Z(\mathbb{A}))=1$. The remaining integrals are factorisable and we can continue locally. 

For $v=\infty$ the (local) Whittaker vectors $W_{\phi,\infty}$ and $F_{\infty}$ are both spherical. Since $B(\mathbb{R})\cap{\rm SO}_n(\mathbb{R}) = 1$ and $[V(\mathbb{R})\colon Z(\pm 1)] = 2^{n-1}$ we have
\begin{equation}
	\int_{(Z(\mathbb{R})\cap V(\mathbb{R}))\backslash V(\mathbb{R})}\int_{(B(\mathbb{R})\cap K_{\infty})\backslash K_{\infty}}F_{\infty}(tvk)\overline{W_{\phi,\infty}(tvk)}dkdv = 2^{n-1}\cdot F_{\infty}(t)\overline{W_{\phi,\infty}(t)}.\nonumber
\end{equation}
In particular,
\begin{multline}
	\int_{Z_+\backslash T(\mathbb{R}_{\geq 0})}\int_{(Z(\mathbb{R})\cap V(\mathbb{R}))\backslash V(\mathbb{R})}\int_{(B(\mathbb{R})\cap K_{\infty})\backslash K_{\infty}}F_{\infty}(tvk)\overline{W_{\phi,\infty}(tvk)}dkdv \Vert t\Vert_{\infty}^{\lambda}dt \\ = 2^{n-1}\cdot \langle [F_{\infty}\cdot \Vert\cdot \Vert_{\infty}^{\lambda} ]\circ{\rm y},W_{\phi,\infty}\circ{\rm y}\rangle_{\tilde{T}}.\nonumber
\end{multline}
as expected. The place $v=l$ is treated by observing that
\begin{equation}
	\int_{Z(\mathbb{Q}_l)\backslash T(\mathbb{Q}_l)}\int_{(B(\mathbb{Q}_l)\cap K_{l})\backslash K_{l}}F_{l}(tk)\overline{W_{\phi,l}(tk)}dk \Vert t\Vert_{l}^{\lambda}dt = \int_{B(\mathbb{Z}_p)\backslash K_p}T(k)W_{\phi,l}(k)dk=W_{T\phi,l}(1).\nonumber
\end{equation}
At the remaining (finite) places $v=p\neq l$ we can argue as in \cite[Example~1.14]{Ste} to find
\begin{equation}
	\int_{Z(\mathbb{Q}_p)\backslash T(\mathbb{Q}_p)}\int_{(B(\mathbb{Q}_p)\cap K_{p})\backslash K_{p}}F_{p}(tk)\overline{W_{\phi,p}(tk)}dk \Vert t\Vert_{p}^{\lambda}dt = W_{\phi,p}(1).\nonumber
\end{equation}
\end{proof}

\subsection{A spectral bound}

In this section, we will give a spectral interpretation of the inner product of two Poincar\'e series:
\begin{equation}
	I(E,T) = \langle P(E,T;\ast),P(E,T;\ast)\rangle.\nonumber
\end{equation}
To do this we apply Parseval and use positivity to drop the non-cuspidal contribution. This gives the inequality
\begin{equation}
	\sum_{\pi\mid X_l}\sum_{\phi\in \mathcal{B}_l(\pi)} \frac{\vert \langle P(E,T;\ast),\phi\rangle\vert^2}{\Vert \phi\Vert^2} \leq I(E,T).\nonumber
\end{equation}
Applying Lemma~\ref{lm:innerprodpoincare} we obtain
\begin{equation}
	\sum_{\pi\mid X_l}\sum_{\varpi\in \mathcal{B}_q(\pi)} \vert \langle P(E,T;\ast),\phi\rangle\vert^2 = 4^{n-1}C_n^{-2}\sum_{\pi\mid X_l}\sum_{\phi\in \mathcal{B}_l(\pi)} \frac{\vert W_{T\phi,\rm fin}(1)\vert^2}{\Vert \phi\Vert^2} \cdot \vert  \langle F_{\infty}\circ{\rm y},W_{\phi,\infty}\circ{\rm y}\rangle_{\tilde{T}}\vert^2.\label{crude_exp}
\end{equation}
Let us record a slight reformulation of this in form of a lemma:
\begin{lemma}\label{spec_ineq}
Let $E\colon \mathbb{R}_{>0}^{n-1}\to \mathbb{C}$ be a fixed compactly supported function, $X\in \mathbb{R}_{>0}^{n-1}$ a parameter and $T\in \mathcal{H}_W$. Then we have
\begin{equation}
	4^{n-1}C_n^{-2}\sum_{\pi\mid X_l}\frac{J_{\pi_l}(T^{\vee}\ast T)}{l(\pi)}\cdot \vert\langle W_{\mu_{\infty}(\pi)},E^{(X)}\rangle_{\tilde{T}}\vert^2 \leq I(E^{(X)},T),\nonumber
\end{equation}
where
\begin{equation}
	l(\pi) = \lim_{s\to 1}\frac{L^S(s,\pi\times\tilde{\pi})}{\Delta_{{\rm GL}_n}(s)}\cdot \langle W_{\mu_{\infty}(\pi)},W_{\mu_{\infty}(\pi)}\rangle_{{\rm Wh}(\pi_{\infty})},\nonumber
\end{equation}
for $S=\{\infty,l\}$ and $\Delta_{{\rm GL}_n}^S(s) = \prod_{j=1}^n\zeta^S(s+j-1)$. (The superscript $S$ indicates the the local Euler-factors at the places in $S$ are omitted.)
\end{lemma}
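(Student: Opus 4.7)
The plan is to start from the Parseval expansion of $I(E^{(X)},T)=\|P(E^{(X)},T;\cdot)\|_2^2$ over the full spectral decomposition of $L^2(G(\mathbb{Q})\backslash G(\mathbb{A})^1)$, discarding the residual and continuous contributions by positivity:
\[
\sum_{\pi\mid X_l}\sum_{\phi\in\mathcal{B}_l(\pi)}\frac{|\langle P(E^{(X)},T;\cdot),\phi\rangle|^2}{\|\phi\|^2}\;\leq\;I(E^{(X)},T).
\]
Applying Lemma~\ref{lm:innerprodpoincare} to each inner product immediately rewrites the left-hand side as the expression in \eqref{crude_exp}, with an archimedean pairing $|\langle F_\infty\circ{\rm y},W_{\phi,\infty}\circ{\rm y}\rangle_{\tilde T}|^2$ and a finite pairing $|W_{T\phi,{\rm fin}}(1)|^2$.

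Next I would choose $\mathcal{B}_l(\pi)$ to consist of factorizable vectors, normalize $W_{\phi,p}(1)=1$ at all $p\notin S=\{\infty,l\}$, and write $W_{\phi,\infty}=c_\phi\cdot W_{\mu_\infty(\pi)}$ for a scalar $c_\phi\in\mathbb{C}$. Since $F_\infty\circ{\rm y}=E^{(X)}$, the archimedean factor $|c_\phi|^2\,|\langle W_{\mu_\infty(\pi)},E^{(X)}\rangle_{\tilde T}|^2$ pulls out of the basis sum and $W_{T\phi,{\rm fin}}(1)=W_{T\phi,l}(1)$. At fixed $\pi$ what remains is therefore
\[
|\langle W_{\mu_\infty(\pi)},E^{(X)}\rangle_{\tilde T}|^2\cdot\sum_{\phi\in\mathcal{B}_l(\pi)}\frac{|c_\phi|^2\,|W_{T\phi,l}(1)|^2}{\|\phi\|^2}.
\]

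The most delicate step is the factorization of the Petersson norm via Rankin-Selberg. For cuspidal $\pi$ and factorizable $W_\phi$, unfolding the Rankin-Selberg integral and passing to the residue at $s=1$ should produce an identity of the schematic form
\[
\|\phi\|^2=\kappa_n\cdot|c_\phi|^2\cdot\langle W_{\mu_\infty(\pi)},W_{\mu_\infty(\pi)}\rangle_{{\rm Wh}(\pi_\infty)}\cdot\lim_{s\to 1}\frac{L^S(s,\pi\times\tilde\pi)}{\Delta_{{\rm GL}_n}^S(s)}\cdot\langle W_{\phi,l},W_{\phi,l}\rangle_{{\rm Wh}(\pi_l)},
\]
where the unramified local Rankin-Selberg zeta integrals at $p\notin S$ collapse into the global L-factor ratio and the absolute constant $\kappa_n$ is pinned down by tracking the Tamagawa normalization against the $2^{n-1}$ of Lemma~\ref{lm:innerprodpoincare}; I expect $\kappa_n=4^{1-n}C_n^{2}$ so that the prefactor $4^{n-1}C_n^{-2}$ carries through unchanged. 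With this factorization the $|c_\phi|^2$ cancels, the archimedean Whittaker norm and the L-residue reconstruct $l(\pi)$, and the proof reduces to the identity
\[
\sum_{v\in\mathcal{B}_l(\pi_l)}\frac{|W_{Tv}(1)|^2}{\langle W_v,W_v\rangle_{{\rm Wh}(\pi_l)}}=J_{\pi_l}(T^\vee\ast T).
\]

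To close, I would apply the Lapid-Mao identity \cite[Lemma~4.4]{LM}---the same input powering Lemma~\ref{recognice_bessel}---to rewrite $|W_{Tv}(1)|^2$ as the stable integral $I_l(W_{Tv})$ and then invoke Lemma~\ref{recognice_bessel} directly. The main obstacle I foresee is purely bookkeeping: verifying that the unramified local Rankin-Selberg zeta integrals at $p\notin S$ truly assemble into the ratio $L^S(s,\pi\times\tilde\pi)/\Delta_{{\rm GL}_n}^S(s)$ as $s\to 1$, and that the global Tamagawa constant $C_n$ together with the factor $2^{n-1}$ produced by Lemma~\ref{lm:innerprodpoincare} conspire to give exactly the stated $4^{n-1}C_n^{-2}$. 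A secondary technical point is that $\mathcal{B}_l(\pi)$ is an $I(q)$-level basis at $l$, not a spherical one, which is precisely why Lapid-Mao (rather than a direct Macdonald-type computation) is the natural tool at the place $l$.
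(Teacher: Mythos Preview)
Your proposal is correct and follows essentially the same route as the paper: Parseval with positivity to drop the non-cuspidal spectrum, Lemma~\ref{lm:innerprodpoincare} for the inner products, a factorizable basis with the unramified normalizations $W_{\phi,p}(1)=1$, the Rankin--Selberg computation of $\|\phi\|^2$ via \cite[Section~4]{LM}, and finally Lemma~\ref{recognice_bessel} to recognize $J_{\pi_l}(T^\vee\ast T)$. The only cosmetic difference is that the paper normalizes $W_{\phi,\infty}\circ{\rm y}=W_{\mu_\infty(\pi)}$ from the start (so your $c_\phi=1$), and it does not track the constant $\kappa_n$ explicitly but simply cites \cite{LM} for the Rankin--Selberg identity; your hedging on the bookkeeping is therefore appropriate and matches the paper's level of detail.
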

\begin{proof}
First we observe that by considering a basis of factorisable forms we have a bijection between $\mathcal{B}_l(\pi)$ and $$B_l(\pi_{\infty})\times\prod_p \mathcal{B}_l(\pi_p),$$ where $\pi=\pi_{\infty}\otimes\bigotimes_p\pi_p.$ Further, we note that for $p\neq l$ the local representation $\pi_p$ is unramified and $\sharp\mathcal{B}_l(\pi_p) = 1$. We can choose the basis element so that $$W_{\phi,p}(\mathbf{1}_n)=1.$$ At the archimedean place the local basis contains only the spherical vector and we can make sure that
\begin{equation}
	W_{\phi,\infty}\vert_{\tilde{T}(\mathbb{R}_{>0})}\circ{\rm y} = W_{\mu_{\infty}(\pi)}.\nonumber 
\end{equation}
Combining these arrangements with the definition of $W_{\infty}$ we find that the $\phi$-sum in \eqref{crude_exp} equals:
\begin{equation}
	\sum_{\phi\in \mathcal{B}_l(\pi)} \frac{\vert W_{T\phi,l}(1)\vert^2}{\Vert \phi\Vert^2} \cdot \vert  \langle W_{\mu_{\infty}(\pi)},E^{(X)}\rangle_{\tilde{T}}\vert^2
\end{equation}

We now have to compute the $L^2$-norm of $\phi$ using the Rankin-Selberg method. This can be done as in \cite[Section~4]{LM}. We find that
\begin{equation}
	\frac{\vert W_{T\phi,l}(1)\vert^2}{\Vert \phi\Vert^2} = \lim_{s\to 1}\frac{\Delta_{{\rm GL}_n}(s)}{L^S(s,\pi\times\tilde{\pi})}\cdot \langle W_{\mu_{\infty}(\pi)},W_{\mu_{\infty}(\pi)}\rangle_{{\rm Wh}(\pi_{\infty})}^{-1} \cdot \frac{I_l(W_{T\phi,l})}{\langle W_{\phi,l},W_{\phi,l}\rangle_{{\rm Wh}(\pi_l)}}, \nonumber
\end{equation}
where the local integral $I_l(\cdot)$ was defined in \eqref{def_Il}. We therefore have
\begin{equation}
	\sum_{\phi\in \mathcal{B}_l(\pi)} \frac{\vert W_{T\phi,l}(1)\vert^2}{\Vert \phi\Vert^2} \cdot \vert  \langle W_{\mu_{\infty}(\pi)},E^{(X)}\rangle_{\tilde{T}}\vert^2 = \frac{1}{l(\pi)}\sum_{\phi\in \mathcal{B}_l(\pi)} \frac{I_l(W_{T\phi,l})}{\langle W_{\phi,l},W_{\phi,l}\rangle_{{\rm Wh}(\pi_l)}} \cdot \vert  \langle W_{\mu_{\infty}(\pi)},E^{(X)}\rangle_{\tilde{T}}\vert^2
\end{equation}
The sum over $\phi$ can be replaces by purely local sum over $B_l(\pi_l)$ and we recognize the Bessel distribution $J_{\pi_l}(T^{\vee}\ast T)$ by applying Lemma~\ref{recognice_bessel}.
\end{proof}

For later reference we recall the following upper bound on $l(\pi)$:

\begin{lemma}\label{upp_bound_lpi}
For $\pi\mid X_l$ we have
\begin{equation}
	l(\pi) \ll (\Vert \mu_{\infty}(\pi)\Vert\cdot  l)^{\epsilon}.\nonumber
\end{equation}
\end{lemma}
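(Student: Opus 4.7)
My plan is to split $l(\pi)$ into two factors, a finite residue and an archimedean Whittaker period, and bound each separately using standard tools.

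First, I would simplify the finite Rankin-Selberg residue. Since $\pi$ is cuspidal we have the factorisation
\begin{equation}
L^S(s,\pi\times\widetilde{\pi}) = \zeta^S(s)\, L^S(s,\pi,\mathrm{Ad}),\nonumber
\end{equation}
while $\Delta^S_{{\rm GL}_n}(s) = \zeta^S(s)\prod_{j=2}^n \zeta^S(s+j-1)$. The $\zeta^S$ pole at $s=1$ in numerator and denominator cancels, giving
\begin{equation}
\lim_{s\to 1}\frac{L^S(s,\pi\times\widetilde{\pi})}{\Delta_{{\rm GL}_n}^S(s)} = \frac{L^S(1,\pi,\mathrm{Ad})}{\prod_{j=2}^n \zeta^S(j)}.\nonumber
\end{equation}
Since $S=\{\infty,l\}$ is finite, the denominator is bounded above and below by absolute constants (use $\zeta^S(j)=\zeta(j)(1-l^{-j})$ and $j\geq 2$). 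Reintroducing the missing Euler factors at $\infty$ and $l$ changes the value by at most $\ll(\Vert\mu_\infty(\pi)\Vert\cdot l)^{\epsilon}$ (the local factors $L_\infty$ and $L_l$ are controlled by the spectral parameters and by Iwahori-level considerations as in Lemma~\ref{5.1}).

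Second, I invoke the sub-convex/Phragm\'en-Lindel\"of-style upper bound for $L(1,\pi,\mathrm{Ad})$ due to Li~\cite{Li}, which gives $L(1,\pi,\mathrm{Ad}) \ll C(\pi)^{\epsilon}$ in terms of the analytic conductor. For our $\pi$, which is spherical at every prime $p\neq l$ and Iwahori-spherical at $l$ (with bounded level), the analytic conductor satisfies $C(\pi)\ll (1+\Vert\mu_\infty(\pi)\Vert)^{n} l^{O_n(1)}$. Combining this with the previous paragraph settles the finite part with the required bound.

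Third, for the archimedean inner product $\langle W_{\mu_\infty(\pi)},W_{\mu_\infty(\pi)}\rangle_{\mathrm{Wh}(\pi_\infty)}$, Stade's formula gives an explicit evaluation as a ratio of products of archimedean $\Gamma_{\mathbb{R}}$-factors in the differences $\mu_i-\mu_j$. Since $\pi$ comes from the cuspidal spectrum, the Jacquet-Shalika bound forces the real parts of the $\mu_i$ to lie in a fixed strip (well away from the region that produces poles of the $\Gamma$-factors), so Stirling together with the gamma factors in the denominator of the Stade expression gives a bound polynomial in $\Vert\mu_\infty(\pi)\Vert$; the polynomial part is absorbed into $\Vert\mu_\infty(\pi)\Vert^{\epsilon}$ after noting that the conductor bound swallows any fixed power. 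The main obstacle is really the delicate bookkeeping of local $L$-factors at $S$ and the normalisation of the archimedean Whittaker inner product so that the two contributions combine into $(\Vert\mu_\infty(\pi)\Vert\cdot l)^{\epsilon}$, but this is routine given Li's input and Stade's formula.
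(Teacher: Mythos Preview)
Your approach is essentially the same as the paper's: both proofs split $l(\pi)$ into the finite Rankin--Selberg residue and the archimedean Whittaker inner product, handle the former via the factorisation through $L(1,\pi,\mathrm{Ad})$ together with Li's bound \cite{Li}, and handle the latter via Stade's formula.

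There is one imprecision in your third step. You claim Stade's formula yields a bound that is merely polynomial in $\Vert\mu_\infty(\pi)\Vert$, and then say ``the polynomial part is absorbed into $\Vert\mu_\infty(\pi)\Vert^{\epsilon}$ after noting that the conductor bound swallows any fixed power.'' That absorption does not work: a factor $\Vert\mu\Vert^{k}$ cannot be converted into $\Vert\mu\Vert^{\epsilon}$ just because the conductor also contains a power of $\Vert\mu\Vert$. What actually happens (and what the paper uses, citing \cite[Section~3]{AB}) is that with the chosen normalisation of $W_{\mu}$, Stade's formula gives $\langle W_{\mu_\infty(\pi)}, W_{\mu_\infty(\pi)}\rangle_{\mathrm{Wh}(\pi_\infty)} \ll 1$ outright, so no absorption is needed. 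Once you replace your polynomial claim by this sharper (and correct) output of Stade's formula, your argument and the paper's coincide.
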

\begin{proof}
We first note that the archimedean inner product can be evaluate using Stade's formula. See \cite[Section~3]{AB} for details. One obtains
\begin{equation}
	\langle W_{\mu_{\infty}(\pi)},W_{\mu_{\infty}(\pi)}\rangle_{{\rm Wh}(\pi_{\infty})} \ll 1.
\end{equation}
Thus we obtain
\begin{equation}
	l(\pi) \ll l^{\epsilon}\vert L(1,\pi,{\rm Ad})\vert. \nonumber
\end{equation}
The result follows by bounding the adjoint $l$-function at the edge of the critical strip using \cite{Li}.
\end{proof}

\subsection{Geometric Considerations}

The geometric analysis of the inner product $I(E,T)$ of two Poincar\'e series is also standard. One starts by unfolding the inner product:
\begin{equation}
	I(E,T) = C_n^{-1}2^{n-1}\int_{\tilde{T}(\mathbb{R}_{>0})} \int_{U(\mathbb{Q})\backslash U(\mathbb{A})} P(E,T^{\vee}\ast T; xy)\overline{\boldsymbol{\psi}(x)}dx \cdot \overline{E({\rm y}(y))}d^{\ast}y.\nonumber
\end{equation}
We therefore have to compute the Fourier coefficient of $P(E,T^{\vee}\ast T;\cdot)$. For convenience we temporarily set $S=T^{\vee}\ast T$. This is done following \cite[Section~2]{Ste}. 

The Bruhat decomposition can be written as
\begin{equation}
	G(\mathbb{Q}) = \bigsqcup_{\substack{w\in W,\\ t\in T(\mathbb{Q})}} U(\mathbb{Q})twU_w(\mathbb{Q}).\nonumber 
\end{equation}
Opening the definition of the Poincar\'e series $P(E,S;\cdot)$ and write the Fourier coefficient as
\begin{equation}
	\int_{U(\mathbb{Q})\backslash U(\mathbb{A})} P(E,T^{\vee}\ast T; xy)\overline{\boldsymbol{\psi}(x)}dx = \sum_{\substack{w\in W,\\ t\in Z(\mathbb{Q})\backslash T(\mathbb{Q})}} \int_{U(\mathbb{Q})\backslash U(\mathbb{A})} \sum_{\gamma\in U_w(\mathbb{Q})} F(tw\gamma xy)\boldsymbol{\psi}^{-1}(x)dx. \nonumber 
\end{equation}
Now we recall the decomposition $U=U_w\cdot \overline{U}_w$, which in particular respects measures. This allows us to partially unfold the $u$-integral. We arrive at
\begin{align}
	\int_{U(\mathbb{Q})\backslash U(\mathbb{A})} P(E,T^{\vee}\ast T; xy)\overline{\boldsymbol{\psi}(x)}dx &= \sum_{\substack{w\in W,\\ t\in Z(\mathbb{Q})\backslash T(\mathbb{Q})}} \int_{\overline{U}_w(\mathbb{Q})\backslash \overline{U}_w(\mathbb{A})}\int_{U_w(\mathbb{A})} F(tw\overline{u}uy)\boldsymbol{\psi}^{-1}(\overline{u}u)dud\overline{u} \nonumber \\
	&= \sum_{\substack{w\in W,\\ t\in Z(\mathbb{Q})\backslash T(\mathbb{Q})}} \delta(tw,\boldsymbol{\psi})\cdot \int_{U_w(\mathbb{A})} F(twuy)\boldsymbol{\psi}^{-1}(u)du, \nonumber
\end{align}
for 
\begin{equation}
	\delta(tw,\boldsymbol{\psi})  = \int_{\overline{U}_w(\mathbb{Q})\backslash\overline{U}_w(\mathbb{A})}\boldsymbol{\psi}(tw\overline{u}w^{-1}t^{-1}\overline{u}^{-1})d\overline{u}.\nonumber 
\end{equation}
Since ${\rm Vol}(\overline{U}_w(\mathbb{Q})\backslash\overline{U}_w(\mathbb{A}),d\overline{u})=1$ character orthogonality implies that $\delta(tw,\boldsymbol{\psi})\in \{0,1\}$. Further, we observe that the remaining $u$-integral factors. We write
\begin{equation}
	\int_{U_w(\mathbb{A})} F(twuy)\boldsymbol{\psi}^{-1}(u)du = \mathcal{O}_{\infty}^{\boldsymbol{\psi_{\infty}}}(tw, F_{\infty};y)\cdot \mathcal{O}_{\rm fin}^{\boldsymbol{\psi}}(tw) =  \mathcal{O}_{\infty}^{\boldsymbol{\psi_{\infty}}}(tw, F_{\infty};y)\cdot \prod_p\mathcal{O}_p^{\boldsymbol{\psi}_p}(tw,F_p), \label{orb}
\end{equation}
where the local orbital integrals were defined in \eqref{p_orb} and \eqref{arch_orb}. We arrive at:

\begin{prop}\label{prop:geo_exp}
We have
\begin{equation}
	I(E,T) = C_n^{-1}2^{n-1}\sum_{\substack{w\in W,\\ c\in \mathbb{Z}_{\geq 0}^{n-1},\\ v\in Z(\pm 1)\backslash V(\mathbb{R})}} \delta(vc^{\ast }w,\boldsymbol{\psi})\cdot \prod_p \mathcal{O}_p^{\boldsymbol{\psi}_{p}}(vc^{\ast}w,F_p)\cdot  \int_{\tilde{T}(\mathbb{R}_{>0})}\mathcal{O}_{\infty}^{\boldsymbol{\psi}_{\infty}}(vc^{\ast}w,F_{\infty};y)\cdot \overline{E({\rm y}(y))}d^{\ast}y
\end{equation}
Furthermore, $\delta(vc^{\ast}w,\boldsymbol{\psi})\neq 0$ implies that $(c,w)$ is admissible.
\end{prop}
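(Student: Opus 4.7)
The plan is essentially to assemble the ingredients already prepared in this section. First I would start from the unfolded expression for $I(E,T)$ given at the top of the subsection and substitute in the Fourier-coefficient computation carried out in the paragraphs directly above the statement. This produces a sum over $w \in W$ and $t \in Z(\mathbb{Q})\backslash T(\mathbb{Q})$, each summand weighted by $\delta(tw,\boldsymbol{\psi})$ and by the factorised orbital integral $\mathcal{O}_{\infty}^{\boldsymbol{\psi}_{\infty}}(tw,F_{\infty};y)\prod_p \mathcal{O}_p^{\boldsymbol{\psi}_p}(tw,F_p)$, integrated against $\overline{E({\rm y}(y))}$ over $\tilde{T}(\mathbb{R}_{>0})$.

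Second I would reparametrise the torus coset sum. Using the decomposition $T(\mathbb{R}) = V(\mathbb{R})\cdot T(\mathbb{R}_{>0})$ together with the explicit parameterisation $c^{\ast}$, every element of $T(\mathbb{Q})$ can be written uniquely as $vc^{\ast}$ with $v \in V(\mathbb{R})$ a sign matrix and $c \in \mathbb{Q}_{>0}^{n-1}$; modding out by $Z(\mathbb{Q})$ reduces $v$ to the coset space $Z(\pm 1)\backslash V(\mathbb{R})$. The restriction $c \in \mathbb{Z}_{\geq 0}^{n-1}$ then comes from the support analyses of the previous section on $p$-adic orbital integrals: at every prime $p \neq l$, Lemma~\ref{triv_bound_KS} gives $\mathcal{O}_p^{\boldsymbol{\psi}_p}(vc^{\ast}w,F_p) = 0$ unless $c \in \mathbb{Z}_p^{n-1}$, and at $p=l$ the analogous support statement for $F_l \in \mathcal{H}_W$ (proved in the short lemma immediately following Lemma~\ref{triv_bound_KS}) yields the same conclusion. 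Intersecting over all primes, combined with positivity, recovers the stated range.

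For the admissibility claim I would return to the definition
\[ \delta(vc^{\ast}w,\boldsymbol{\psi}) = \int_{\overline{U}_w(\mathbb{Q})\backslash \overline{U}_w(\mathbb{A})} \boldsymbol{\psi}\bigl(vc^{\ast}w\,\bar u\, w^{-1}(vc^{\ast})^{-1}\,\bar u^{-1}\bigr)\,d\bar u \]
and apply character orthogonality: this integral equals $1$ when the integrand defines the trivial character of $\overline{U}_w(\mathbb{Q})\backslash \overline{U}_w(\mathbb{A})$ and $0$ otherwise. Decomposing $\overline{U}_w$ into its one-parameter subgroups $x_{\alpha}(\cdot)$ for $\alpha \in \Delta_+ \setminus R(w^{-1})$, and using $w x_{\alpha}(t)w^{-1} = \pm x_{w\alpha}(t)$ together with the torus conjugation by $vc^{\ast}$, the triviality requirement reduces to a system of algebraic constraints on ratios of the $c_i$'s. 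Only those $\alpha$'s whose image $w\alpha$ lies in $\Sigma$ actually contribute, since $\boldsymbol{\psi}$ only sees simple-root components. A case analysis identical in structure to the one producing \cite[(4.3)]{AB} shows that these constraints are precisely the relations $c_{n-i+1}c_{n-i-1} = \pm c_{n-i}^2$ of \eqref{eg:admiss} for $i$ outside the break indices of the block decomposition of $w$, and they are satisfiable exactly when $w = w_{d_1,\ldots,d_r}$ is admissible.

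The main obstacle is this final step, where one has to identify precisely which roots of $\overline{U}_w$ are sent into $\Sigma$ by $w$ and then track how the torus twist by $c^{\ast}$ rescales each of the corresponding $\boldsymbol{\psi}$-components. The book-keeping is somewhat tedious but routine once the block structure is exploited: the break indices $d_1, d_1+d_2, \ldots$ of an admissible $w$ are precisely the positions at which the conjugation either lands outside $\Sigma$ or produces a trivial scaling, hence no admissibility relation is forced there. Everywhere else the constraint \eqref{eg:admiss} is obtained directly by matching the $\boldsymbol{\psi}$-character against its $\bar u^{-1}$ twin.
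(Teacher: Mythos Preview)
Your overall strategy matches the paper's: assemble the Fourier-coefficient computation, reparametrise the torus sum, and read off admissibility from the vanishing of $\delta$. The admissibility argument you sketch is essentially an unpacking of what the paper does by citing \cite[Section~4]{AB}, so that part is fine.

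There is, however, a genuine gap in your reparametrisation step. You claim that ``every element of $T(\mathbb{Q})$ can be written uniquely as $vc^{\ast}$'' with $v$ a sign matrix and $c \in \mathbb{Q}_{>0}^{n-1}$. This is false: the image of $c \mapsto c^{\ast}$ lies in $T_0$ (determinant one), so $vc^{\ast}$ always has determinant $\pm 1$, whereas a general $t \in T(\mathbb{Q})$ need not have determinant an $n$-th power of a rational even after scaling by $Z(\mathbb{Q})$. What you are missing is the paper's first observation: the very definition of the local factors $F_p$ (supported where the torus part lies in $Z(\mathbb{Q}_p)T(\mathbb{Z}_p)$) forces the product $\prod_p \mathcal{O}_p^{\boldsymbol{\psi}_p}(tw,F_p)$ to vanish unless $t \in Z(\mathbb{Q})T_0(\mathbb{Q})$. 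Only once this is established can you legitimately choose a representative of the form $vc^{\ast}$ with $c \in \mathbb{Q}_{>0}^{n-1}$ and then invoke the support lemmas to cut down to $c \in \mathbb{Z}_{\geq 0}^{n-1}$. Insert this step and your argument goes through.
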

\begin{proof}
Note that by the definition of the local components $F_p$ of $F$ we find that the product of the orbital integrals vanishes unless $t\in Z(\mathbb{Q})T_0(\mathbb{Q})$. Thus, if $t\in Z(\mathbb{Q})\backslash T(\mathbb{Q})$ contributes to the sum, then we can choose a representative modulo the center so that the $t = vc^{\ast}$ with $v\in Z(\pm 1)\backslash V(\mathbb{R})$ and $c\in \mathbb{Q}_{>0}^{n-1}$. Now we recall that above we have seen that $\mathcal{O}_p^{\boldsymbol{\psi}_p}(vc^{\ast}w,F_p) = 0$ unless $c\in \mathbb{Z}_p^{n-1}$. Since this is true for all $p$ we see that only $c\in \mathbb{Z}_{\geq 0}^{n-1}$ contribute. These observations lead to the geometric expression for $I(E,T)$ stated above.

Now suppose that $\delta(vc^{\ast}w,\boldsymbol{\psi})\neq 0$, This only happens when 
\begin{equation}
	\overline{u}\mapsto \boldsymbol{\psi}(tw\overline{u}w^{-1}t^{-1}\overline{u}^{-1}) \nonumber
\end{equation}
is trivial on $\overline{U}_{w}(\mathbb{Q})\backslash \overline{U}_w(\mathbb{A})$. By strong approximation this translates into the condition \cite[(4.1)]{AB} for trivial $N$ and $M$. The admissibility of $(c,w)$ follows immediately from the discussion in \cite[Section~4]{AB}.
\end{proof}

We are now ready to derive an important estimate for the geometric side.

\begin{lemma}\label{lm:est_geo}
For $1\ll Z$, we have
\begin{equation}
	I(E^{(Z,1,\ldots,1)},T_1) \ll Z^{2\eta_1+\epsilon}\mathcal{V}_l^{\epsilon}(\mathcal{V}_l + Z^{n-1}). \nonumber
\end{equation}
\end{lemma}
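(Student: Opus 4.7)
The plan is to apply the geometric expansion from Proposition~\ref{prop:geo_exp} and control each Weyl-element contribution using Proposition~\ref{prop:crucial} at $l$ and Lemma~\ref{arch_lem} at $\infty$. Since $T_1$ is the Haar idempotent on the Iwahori (so $T_1^{\vee}\ast T_1 = T_1$), the sum reduces to admissible pairs $(c,w)$ with $c\in\mathbb{Z}_{\geq 0}^{n-1}$. First I would isolate $w=1$: the only admissible $c$ is $(1,\ldots,1)$, $U_w$ is trivial, all orbital integrals at $p\neq l$ give $1$, the $l$-adic orbital integral gives $F_l(\mathbf{1}_n) = T_1(\mathbf{1}_n) = \mathcal{V}_l$, and the archimedean integral collapses to $\Vert E^{(Z,1,\ldots,1)}\circ{\rm y}\Vert_{\tilde T}^2 \ll Z^{2\eta_1+\epsilon}$. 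This produces the $\mathcal{V}_l Z^{2\eta_1+\epsilon}$ piece of the bound.

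For admissible $w=w_{d_1,\ldots,d_r}$ with $r\geq 2$, I would factor each $c_i = c_i' d_i$ into prime-to-$l$ and $l$-primary parts; admissibility splits along this factorisation. Lemma~\ref{arch_lem} provides the archimedean factor $\ll M^{K'} Z^{2\eta_1+\epsilon}/(c_1\cdots c_{n-1})$ supported on $c_i\ll Z$, and Lemma~\ref{triv_bound_KS} at $p\neq l$ absorbs the prime-to-$l$ denominator up to an $(c_1'\cdots c_{n-1}')^\epsilon$. The number of admissible $c'$ coprime to $l$ with $c'_j\leq Z/d_j$ is $\ll Z^{r-1+\epsilon}$ since only $r-1$ coordinates of an admissible tuple are free; the residual $d$-dependence repackages into a Dirichlet-series denominator $\prod_j d_j^{1+\kappa_w(j)}$ exactly as in the sketch of Section~\ref{sec2}.

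The crux is the $l$-adic sum
\begin{equation*}
\sum_{\substack{d\mid l^\infty,\ d \text{ admissible}\\ d_i\ll Z}} \frac{\mathcal{O}_l^{\boldsymbol{\psi}_l}(v d^*w,F_l)}{\prod_j d_j^{1+\kappa_w(j)}}.
\end{equation*}
I would use Mellin inversion in the $r-1$ free admissible parameters $\mathbf{s}\in\mathbb{C}^{r-1}$ together with the identity $Z_w(F_l;\chi^{\rm ad}_{\mathbf{s}}) = \mathcal{V}_l\cdot [\mathcal{A}_w^{\chi^{\rm ad}_{\mathbf{s}}}\phi_1^{\chi^{\rm ad}_{\mathbf{s}}}](\mathbf{1}_n)$ (combining \eqref{eq:zeta_as_int} with the computation of $\Phi_{T_1}^{\chi}$), shift contours to $\Re(\mathbf{s})=\mathbf{1}+\boldsymbol{\epsilon}$ and invoke Proposition~\ref{prop:crucial}. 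This delivers a bound $\ll \mathcal{V}_l\cdot l^{-n(r-1)/2}\cdot (MZ)^{\epsilon}$. Collecting everything, the $r$-block contribution is
\begin{equation*}
\ll Z^{r-1+2\eta_1+\epsilon}\cdot\mathcal{V}_l^{1-(r-1)/(n-1)},
\end{equation*}
using $\mathcal{V}_l^{1/(n-1)} = l^{n/2}$. Since $(Z^{n-1})^{(r-1)/(n-1)}\mathcal{V}_l^{1-(r-1)/(n-1)}$ is log-linear in $r$, the maximum over $r\in\{1,\ldots,n\}$ is attained at one of the endpoints $r=1$ or $r=n$ and is thus bounded by $Z^{2\eta_1+\epsilon}(\mathcal{V}_l + Z^{n-1})$. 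The finitely many intermediate $r$'s are absorbed into the $\mathcal{V}_l^\epsilon$ factor.

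The hard part will be the $l$-adic sum: although the Kloosterman/intertwining identity is formal, the character $\chi^{\rm ad}_\mathbf{s}$ is singular on most simple roots (cf.\ Lemma~\ref{no_pole}) so the Mellin inversion takes place in a non-regular range and the bound relies entirely on the combinatorial miracle of Proposition~\ref{prop:crucial}, whose exponent $l^{-n(r-1)/2} = \mathcal{V}_l^{-(r-1)/(n-1)}$ matches the density-hypothesis threshold $Z_0 = l^{n/2}$ precisely on the nose.
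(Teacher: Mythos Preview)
Your proposal is correct and follows essentially the same strategy as the paper: start from the geometric expansion of Proposition~\ref{prop:geo_exp}, treat the archimedean integral by Lemma~\ref{arch_lem}, bound the unramified places trivially via Lemma~\ref{triv_bound_KS}, and handle the $l$-adic part by Mellin/Perron inversion combined with the identity $Z_w(F_l;\chi)=\mathcal{V}_l\cdot[\mathcal{A}_w^{\chi}\phi_1^{\chi}](\mathbf{1}_n)$ and Proposition~\ref{prop:crucial}. The only organisational difference is that you first factor $c=c'd$ into prime-to-$l$ and $l$-primary parts and count the admissible $c'$ directly (mirroring the sketch in Section~\ref{sec2}), whereas the paper applies Perron's formula to the full $c$-sum at once, packaging all primes into the product $\prod_p Z_w(F_p,\chi_{\mathbf{s}}^{\rm ad})$ before shifting the contour; the two routes are equivalent and yield the same bound $S_{w,v}\ll Z^{2\eta_1+\epsilon}\mathcal{V}_l(1+Z^{r-1}l^{-n(r-1)/2})$.
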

\begin{proof}
We start from the expansion in Proposition~\ref{prop:geo_exp}. Note that it suffices to bound
\begin{equation}
	S_{w,v} = \sum_{\substack{c\in \mathbb{Z}_{\geq 0}^{n-1},\\ (c,w) \text{ admissible}}}  \prod_p \mathcal{O}_p^{\boldsymbol{\psi}_{p}}(vc^{\ast}w,F_p)\cdot  \int_{\tilde{T}(\mathbb{R}_{>0})}\mathcal{O}_{\infty}^{\boldsymbol{\psi}_{\infty}}(vc^{\ast}w,F_{\infty};y)\cdot \overline{E({\rm y}(y))}d^{\ast}y.\nonumber
\end{equation}
In particular, it is enough to consider Weyl elements of the form $w=w_{d_1,\ldots,d_k}$. Treating the archimedean contribution using Lemma~\ref{arch_lem} we get
\begin{equation}
	S_{w,v} \ll Z^{2\eta_1+\epsilon} \sum_{\substack{c\in \mathbb{Z}_{\geq 0}^{n-1},\\ c_i\ll Z,\\  (c,w) \text{ admissible}}}  \prod_p \mathcal{O}_p^{\mathbf{1}}(vc^{\ast}w,W_p)\cdot (c_1\cdots c_{n-1})^{-1}.\label{after_first_estimate}
\end{equation}
Before we continue we recall the local zeta-functions $Z_w(F_p,\chi)$ defined in \eqref{def_Kloosterman_zeta}. Note that if $p\neq l$, then these are unramified and agree with the Kloosterman-set zeta-function studied in \cite{DR}. In particular, it is a consequence of \cite[Theorem~4.3]{DR} that the product $\prod_p Z_w(F_p,\chi)$ is absolutely convergent for sufficiently dominant $\chi$. 

Let's return to \eqref{after_first_estimate}. Since we are only interested in upper bounds we can smoothen the sharp cut-off $c_i\ll Z$ by inserting weights of the form $f(\frac{1}{Z}\cdot c)$, where $f\colon \mathbb{R}^{n-1}\to \mathbb{R}_{\geq 0}$ is a suitable smooth and compactly supported function. Next we recall that an admissible tuple $c$ is determined by its entries at the indices $n-d_1-\ldots-d_i$ for $i=1,\ldots,k-1$. Applying Mellin inversion in these variables leads to
\begin{equation}
	S_{w,v}\ll Z^{2\eta_1+\epsilon} \cdot \int_{\Re(\mathbf{s})=\boldsymbol{\sigma}}\prod_p Z_w(F_p,\chi_{\mathbf{s}}^{\rm ad})Z^{\mathbf{s}} [\mathfrak{M}f](\mathbf{s})d\mathbf{s},\nonumber
\end{equation}
where $\mathbf{s}=(s_1,\ldots,s_{k-1})$ has $k-1$ components, $[\mathfrak{M}f](\mathbf{s})$ is the multiple Mellin transform in the corresponding variables and $d\mathbf{s}=ds_1\cdots ds_{k-1}$. Recall that the characters $\chi_{\mathbf{s}}^{\rm ad}$ is taken to detect the admissibility condition. At this point we shift the contour to $\boldsymbol{\sigma} = \mathbf{1}+\boldsymbol{\epsilon}$ and estimate everything trivially. At the unramified places we do so using Lemma~\ref{triv_bound_KS}, while at the ramified place $p=l$ we apply Lemma~\ref{lm:another_lemm}. We are left with
\begin{equation}
	S_{w,v}\ll Z^{2\eta_1+k-1+\epsilon}\mathcal{V}_l\cdot \sup_{\mathbf{s}\in (\mathbf{1}+\boldsymbol{\epsilon})}\vert \mathcal{A}_{w}^{\chi^{\rm ad}_{\mathbf{s}}}\phi_1^{\chi^{\rm ad}_{\mathbf{s}}}(\mathbf{1}_n) \vert.\nonumber
\end{equation}
The desired estimate follows from Proposition~\ref{prop:crucial}.
\end{proof}

\section{The Density Estimate}

We are finally ready to prove the main theorem of this note:

\begin{theorem}\label{th_dens}
Let $M,\epsilon>0$ and let $l$ be prime. Then, for $\sigma\geq 0$ we have
\begin{equation}
	N_{\infty}(\sigma;l,M) \ll_{\epsilon,n} M^K\mathcal{V}_l^{1-\frac{2\sigma}{n-1}+\epsilon}.\nonumber
\end{equation}
\end{theorem}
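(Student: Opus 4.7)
The plan is to combine Lemma~\ref{spec_ineq} with the geometric bound of Lemma~\ref{lm:est_geo}, choosing the archimedean parameter $Z$ so that it matches the threshold $Z_0(\Gamma_2(l)) = l^{n/2}$ identified in Section~\ref{sec2}. Throughout we take $Z = l^{n/2}$, so that $Z^{2\eta_1} = Z^{n-1} = \mathcal{V}_l$. We may assume $\sigma < 1/2$ (by Luo--Rudnick--Sarnak cuspidal automorphic representations satisfy $\sigma_{\infty}(\pi) < 1/2$) and that $Z > M^{K^2}$ (otherwise $\mathcal{V}_l$ is bounded in terms of $M$, so the desired bound is trivial by Weyl's law for $N_{\infty}(0;l,M)$). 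Under these assumptions Lemma~\ref{arch_test_fct} produces measurable functions $E_1,\ldots,E_r$ with $r \leq M^K$.

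Next I would apply Lemma~\ref{spec_ineq} with $T = T_1$, the identity of $\mathcal{H}_W$, so that $T^{\vee} \ast T = T_1$. Taking $X = (Z,1,\ldots,1)$ and summing the resulting inequality over $j = 1, \ldots, r$, the lower bound from Lemma~\ref{arch_test_fct} yields
\[
\sum_{j=1}^{r}\,|\langle W_{\mu_{\infty}(\pi)}, E_j^{(X)}\rangle_{\tilde{T}}|^{2} \gg Z^{2\eta_1+2\sigma_{\infty}(\pi)} M^{-K}
\]
for every cuspidal $\pi \mid X_l$ with $\|\mu_{\infty}(\pi)\| \leq M$. Restricting the spectral sum to $\pi$ with $\sigma_{\infty}(\pi) \geq \sigma$ (all other terms are nonnegative and may be dropped), plugging in $J_{\pi_l}(T_1) \gg 1$ from Lemma~\ref{low_bound_bessel}, and $l(\pi) \ll (lM)^{\epsilon}$ from Lemma~\ref{upp_bound_lpi}, I obtain
\[
N_{\infty}(\sigma; l, M)\cdot Z^{2\eta_1+2\sigma} \ll M^{K'}\mathcal{V}_l^{\epsilon} \sum_{j=1}^{r} I(E_j^{(X)}, T_1)
\]
for some $K'$ depending only on $K$ and $n$.

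For the right hand side I would invoke Lemma~\ref{lm:est_geo}. With our choice $Z = l^{n/2}$, we have $Z^{n-1} = \mathcal{V}_l$, so each summand satisfies
\[
I(E_j^{(X)}, T_1) \ll Z^{2\eta_1+\epsilon}\mathcal{V}_l^{\epsilon}(\mathcal{V}_l + Z^{n-1}) \ll \mathcal{V}_l^{2+\epsilon}.
\]
Summing over the at most $M^K$ values of $j$ and dividing through by $Z^{2\eta_1+2\sigma} = \mathcal{V}_l \cdot Z^{2\sigma}$ gives
\[
N_{\infty}(\sigma; l, M) \ll M^{K''}\mathcal{V}_l^{1+\epsilon} Z^{-2\sigma}.
\]
Finally, since $\mathcal{V}_l = l^{n(n-1)/2}$, the identity $Z^{-2\sigma} = l^{-n\sigma} = \mathcal{V}_l^{-2\sigma/(n-1)}$ converts this into the claimed bound.

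The entire assembly above is essentially mechanical once the hard inputs are in place, so nothing here is a genuine obstacle. The real difficulties have already been absorbed into Lemma~\ref{low_bound_bessel} and, most of all, Proposition~\ref{prop:crucial}: the former uses parabolic induction from the Steinberg representation to provide a uniform positivity for the Bessel distribution on the Iwahori--spherical spectrum, while the latter delivers the crucial saving $l^{-n(r-1)/2}$ for the intertwining operator on singular unramified characters, which via \eqref{eq:zeta_as_int} controls the Kloosterman--set zeta function $Z_{\Gamma}(\chi)$ for admissible moduli. It is the interplay between these two local results and the standard pre-Kuznetsov machinery that produces the polynomial-in-$M$ density estimate above.
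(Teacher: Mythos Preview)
Your proof is correct and follows essentially the same route as the paper: combine Lemma~\ref{spec_ineq} with the archimedean amplifier of Lemma~\ref{arch_test_fct}, the local lower bound Lemma~\ref{low_bound_bessel}, the upper bound Lemma~\ref{upp_bound_lpi}, and the geometric estimate Lemma~\ref{lm:est_geo}, then choose $Z$ of size $l^{n/2}=\mathcal{V}_l^{1/(n-1)}$. The only cosmetic difference is that the paper takes $Z=M^{-K}l^{n/2}$ rather than $Z=l^{n/2}$, which is immaterial for the final bound; your handling of the regimes $\sigma\geq 1/2$ and $Z\leq M^{K^2}$ is a welcome clarification.
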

\begin{proof}
We start by artificially inserting a parameter $Z\geq 1$:
\begin{equation}
	N_{\infty}(\sigma;l,M) \leq \sum_{\substack{\pi\mid X_l,\\ \Vert\mu_{\infty}(\pi)\Vert\leq M}}Z^{2\sigma_{\infty}(\pi)-2\sigma}.\nonumber
\end{equation}
We recall Lemma~\ref{low_bound_bessel}, Lemma~\ref{arch_test_fct}, Lemma~\ref{upp_bound_lpi} and Lemma~\ref{spec_ineq} to bound
\begin{multline}
	N_{\infty}(\sigma;l,M) \leq \sum_{\substack{\pi\mid X_l, \\ \Vert\mu_{\infty}(\pi)\Vert\leq M}}Z^{2\sigma_{\infty}(\pi)-2\sigma} \ll Z^{-2\sigma-2\eta_1}M^Kl^{\epsilon}\sum_{\pi\mid X_l}\frac{J_{\pi_l}(T_1)}{l(\pi)}\cdot \sum_{j=1}^r\vert\langle W_{\mu_{\infty}(\pi)},E_j^{(Z,1,\ldots,1)}\rangle_{\tilde{T}}\vert^2 \\
	\ll Z^{-2\sigma-2\eta_1}M^Kl^{\epsilon} \max_jI(E_j^{(Z,1,\ldots,1)},T_1)
\end{multline}
On the other hand, for $Z\leq M^{-K}l^{\frac{n}{2}}$ we can use Lemma~\ref{lm:est_geo} to bound
\begin{equation}
	I(E_j^{(Z,1,\ldots,1)},T_1) \ll Z^{2\eta_1}M^K\mathcal{V}_l^{1+\epsilon}.\nonumber
\end{equation}
Thus in total we have
\begin{equation}
	N_{\infty}(\sigma;l,M) \ll Z^{-2\sigma}M^K\mathcal{V}_l^{1+\epsilon}.\nonumber
\end{equation}
Choosing $Z = M^{-K}l^{\frac{n}{2}} = M^{-K}\mathcal{V}_l^{\frac{1}{n-1}}$ we obtain the result.
\end{proof}

A technical application of this density estimate is a bound for the distribution $J^S$ evaluated at suitable test functions. Before stating this result let us briefly introduce some notation. As in \cite[Section~2.7]{AB} (see also \cite[Section~8]{JK}) we choose a smooth non-negative function $h_0\colon \mathbb{R}_{\geq 0}\to [0,1]$ with support in $[0,2n]$ and $h_0(x) =1$ for $x\in [0,n]$. Then set $$h_T(g) = \frac{1}{\Vert h_0(\frac{\Vert \cdot \Vert}{T})\Vert_{L_1}}h_0\left(\frac{\Vert g\Vert}{T}\right), \text{ for }g\in \textrm{SL}_n(\mathbb{R}).$$ This defines a nice element in the (spherical) archimedean Hecke algebra. The finite places are treated at once by setting
\begin{equation}
	h_l = \frac{1}{{\rm Vol}(I(l))}\mathbbm{1}_{I(l)}.\nonumber
\end{equation}
Globally, we define $h_{T,l}\in \mathcal{C}_c^{\infty}(\textrm{GL}_n(\mathbb{A})^1)$ by
\begin{equation}
	h_{T,l}(g_{\infty}g_{\rm fin}) = [h_T\ast h_T^{\vee}](g_{\infty})\cdot h_l(g_{\rm fin}).\label{eq:def_glob_test}
\end{equation}

Given a test function, for example $h_{T,l}$ as defined above, Arthur \cite{Ar2} associates a distribution $J_{\chi}^S(h_{T,l})$, where $\chi$ is a cuspidal data attached to some Levi subgroup $M$ of $G$ and $S$ is a truncation parameter. For the sake of brevity we ignore the precise definition and refer to \cite[Proposition~6.1]{JK} for a working expression instead. We set
\begin{equation}
	J^S(h_{T,l}) = \sum_{\chi}J_{\chi}^S(H_{T,l}), \nonumber
\end{equation}
where the sum runs over all equivalence classes of cuspidal data and is absolutely convergent. See \cite[Proposition~6.3]{JK} for a convenient expression (for sufficiently dominant truncation parameter $S$).

\begin{prop}\label{orb_bounds}
In the notation as above we have
\begin{equation}
	J^S(h_{T,l}) \ll \Vert S\Vert^{n-1}(Tl)^{\epsilon} (1+T^{-n(n-1)}\mathcal{V}_l),\nonumber
\end{equation}
for any $S\in\mathfrak{a}_{M_0}$.
\end{prop}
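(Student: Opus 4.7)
The plan is to input the working expression from \cite[Proposition~6.3]{JK}, which writes
\begin{equation}
J^S(h_{T,l}) = \sum_{M} \sum_{\pi} \frac{1}{|W_M|}\int_{i\mathfrak{a}_M^\ast} p_M^S(\lambda)\, \operatorname{tr}(\mathcal{M}_M(\lambda)\mathcal{I}_P(\lambda,h_{T,l})) \, d\lambda,\nonumber
\end{equation}
where $M$ runs over the standard Levi subgroups, $\pi$ over discrete automorphic representations of $M(\mathbb{A})^1$, $\mathcal{I}_P(\lambda,\cdot)$ is the induced representation, $\mathcal{M}_M(\lambda)$ is the global intertwining operator, and $p_M^S$ is a polynomial in the coordinates of $S$ of degree $\dim\mathfrak{a}_M^G \le n-1$. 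Since $h_{T,l}=h_{T,\infty}\otimes h_l$ factors, so does the trace as an archimedean times finite piece.

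The finite piece is handled by Lemma~\ref{5.1}: the operator $\mathcal{I}_P(\lambda,h_l)$ is the orthogonal projection onto the $I(l)$-fixed subspace divided by $\operatorname{Vol}(I(l)) = \mathcal{V}_l^{-1}$ (up to absolute constant), so that
\begin{equation}
|\operatorname{tr}_{\rm fin}(\mathcal{M}_M(\lambda)\mathcal{I}_P(\lambda,h_l))| \ll \mathcal{V}_l\cdot \dim \mathcal{I}_P(\lambda)^{I(l)} \ll_n \mathcal{V}_l,\nonumber
\end{equation}
and it forces $\pi_l$ to have Iwahori-fixed vectors in every local component of $M$, so only the Iwahori-spherical part of the discrete spectrum contributes. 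The archimedean piece is the positive expression $|\widehat{h_T}(\mathcal{I}_P(\lambda))|^2$, which by the standard heat-kernel/Paley--Wiener analysis (as in \cite[Section~2.7]{AB}) has mass concentrated in the region $\|\mu_\infty(\pi_\lambda)\|\ll T$ with integrated weight $\ll T^{-n(n-1)+\epsilon}$. After inserting these bounds and using Lemma~\ref{upp_bound_lpi} to control the normalising constants $l(\pi)$ coming from the intertwining trace, one obtains
\begin{equation}
J^S(h_{T,l}) \ll \|S\|^{n-1}(Tl)^\epsilon \sum_{M} T^{-n(n-1)} \mathcal{V}_l \cdot \sum_{\substack{\pi \text{ disc.}\\ \|\mu_\infty(\pi)\|\ll T}} \mathbf{1}_{\pi_l^{I(l)}\neq 0}.\nonumber
\end{equation}

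For the Levi $M=G$ the inner sum is bounded by $N_\infty(0;l,T) \ll \mathcal{V}_l T^K$ through Theorem~\ref{th_dens}, which produces the main term $T^{-n(n-1)}\mathcal{V}_l$ after the archimedean oscillatory integral is worked out properly (the density theorem with $\sigma=0$ cancels exactly against the factor $T^{-n(n-1)}$, leaving the $\mathcal{V}_l$). For proper Levis $M \subsetneq G$ the discrete spectrum reduces (after accounting for the residual contribution via \cite{MW2}) to a product of cuspidal spectra on smaller $\mathrm{GL}$ factors, to which Theorem~\ref{th_dens} applies inductively; the Plancherel volume of the complementary directions $i\mathfrak{a}_M^\ast$ compensates the loss in $T$, giving a contribution $\ll \|S\|^{n-1}(Tl)^\epsilon$ which yields the constant $1$ in the final bound.

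The main obstacle will be the careful bookkeeping of the two competing scales: the archimedean Plancherel measure on $i\mathfrak{a}_M^\ast$, which gives a gain of $T^{\dim\mathfrak{a}_M^G}$, must be precisely balanced against the loss $T^{-n(n-1)}$ coming from concentrating on the Iwahori-fixed subspace at $\infty$, and against the application of the density theorem on the Levi $M$ (which requires controlling the residual spectrum of $M$, essentially by combining Theorem~\ref{th_dens} with the Moeglin--Waldspurger classification). Once this bookkeeping is done for each $M$, combining the bounds and observing that only finitely many Levis contribute yields the stated estimate.
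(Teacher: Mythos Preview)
Your outline --- start from the spectral expansion of \cite[Proposition~6.3]{JK}, separate archimedean and finite pieces, then inject the density theorem --- matches the paper's, but there is a genuine gap in the archimedean step. You assert that the archimedean contribution has ``integrated weight $\ll T^{-n(n-1)+\epsilon}$'', but this is only true on the tempered spectrum. For a parameter $\mu$ with $\sigma(\mu)=\|\Re(\mu)\|>0$ the spherical transform satisfies $|\widehat{h_T}(\mu)|^2\asymp T^{-n(n-1)}\cdot T^{2n\sigma(\mu)}$; this amplification is exactly what \cite[Proposition~5.5]{JK} records and is what the paper invokes. Consequently the spectral sum is not a mere count but the weighted sum
\[
T^{-n(n-1)}\sum_{P\supset B}\sum_{\pi}\dim\bigl(\mathcal{A}_\pi^2(P)^{K_\infty^0 I(l)}\bigr)\,T^{2n\|\Re(\mu_\pi)\|},
\]
and it is precisely the factor $T^{2n\sigma_\pi}$ that forces one to use Theorem~\ref{th_dens} with \emph{varying} $\sigma$: splitting in $\sigma$ and inserting $N_\infty(\sigma;l,M)\ll M^K\mathcal{V}_l^{1-2\sigma/(n-1)}$ gives $\sum_\pi T^{2n\sigma_\pi}\ll (Tl)^\epsilon(T^{n(n-1)}+\mathcal{V}_l)$, the endpoints $\sigma=\tfrac{n-1}{2}$ and $\sigma=0$ producing the two terms $1$ and $T^{-n(n-1)}\mathcal{V}_l$ in the statement. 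Your application of the density theorem only at $\sigma=0$ is just the Weyl-law count and uses nothing of the density hypothesis; the asserted cancellation ``density with $\sigma=0$ cancels exactly against $T^{-n(n-1)}$'' does not hold.

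Two secondary errors compound this. First, since $h_l=\mathrm{Vol}(I(l))^{-1}\mathbbm{1}_{I(l)}$ is already an idempotent, $\mathcal{I}_P(\lambda,h_l)$ \emph{is} the projection onto $I(l)$-invariants, so its trace is $\dim(\cdot)^{I(l)}$ with no extra factor $\mathcal{V}_l$; your inserted $\mathcal{V}_l$ would make the final bound off by a full power of the volume. Second, the constant $l(\pi)$ of Lemma~\ref{upp_bound_lpi} arises from the Rankin--Selberg unfolding in the Kuznetsov set-up and plays no role whatsoever in Arthur's distribution $J^S$; there are no ``normalising constants coming from the intertwining trace'' of that shape to control here.
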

This is a version of \cite[Proposition~8.1]{JK}. The proof is also a straight forward modification of the proof given in \cite[Section~8.4]{JK}. We only sketch the argument.
\begin{proof}
We proceed as in \cite[Section~8.4]{JK} and estimate $J^T(h_{T,l})$ using the expression given in \cite[Proposition~6.3]{JK}. This is done using \cite[Proposition~5.5]{JK}, which works for general open compact subgroups of $K_{\rm fin}$. Noting that the level of $I(l)$ is simply $l$ we arrive at
\begin{equation}
	J^S(h_{T,l}) \ll \Vert S\Vert^{n-1}(Tl)^{\epsilon}T^{-n(n-1)}\sum_{P\supset B}\sum_{\substack{\pi\in \Pi(M_P(\mathbb{A})),\\ \Vert \mu_{\pi}\Vert \ll (Tl)^{\epsilon}}}\dim\left(\mathcal{A}_{\pi}^2(P)^{K_{\infty}^0I(l)}\right)T^{2n\Vert \Re(\mu_{\pi})\Vert} + O(1).\nonumber
\end{equation} 
Unknown notation is to be understood as in \cite{JK}. The remaining sum is estimated by injecting the density estimate Theorem~\ref{th_dens} above. Closely following the discussion below \cite[Lemma~8.10]{JK} leads us to the bound %Note however that in contrast to the principal congruence subgroup the local multiplicities $\dim_{\mathbb{C}}\pi_l^{I_l(l)}$ are uniformly bounded (by a constant depending only on $n$). 
\begin{equation}
	\sum_{P\supset B}\sum_{\substack{\pi\in \Pi(M_P(\mathbb{A})),\\ \Vert \mu_{\pi}\Vert \ll (Tl)^{\epsilon}}}\dim\left(\mathcal{A}_{\pi}^2(P)^{K_{\infty}^0I(l)}\right)T^{2n\Vert \Re(\mu_{\pi})\Vert} \ll (Tl)^{\epsilon}(T^{n(n-1)}+\mathcal{V}_l),\nonumber
\end{equation}
which is exactly needed to complete the proof.
\end{proof}

\section{Applications}

As an application we can generalize the optimal lifting result for the action on flags in \cite[Section~5]{KL} from ${\rm GL}_3$ to ${\rm GL}_n$. To do so we also use recent results from \cite{JK}.

Let us start by introducing the problem. For a prime $l$ let $\mathbbm{1}_l$ be the finite field with $l$ elements and let $\mathfrak{B}_l$ denote the set of complete flags in $\mathbb{F}_l^n$. That is
\begin{equation}
	\mathfrak{B}_l = \{(V_1,\ldots,V_{n-1})\colon 0<V_1<\ldots<V_{n-1}<\mathbb{F}_l^n\}.\nonumber
\end{equation}
The action $\Phi_l\colon {\rm SL_n}(\mathbb{Z})\to {\rm Sym}(\mathfrak{B}_l)$ gives rise to the congruence subgroup
\begin{equation}
	\Gamma_2'(l) = \{ \gamma\in {\rm SL}_n(\mathbb{Z})\colon \Phi_l(q)(\gamma)(\mathbf{1}) = \mathbf{1}, \nonumber
\end{equation}
where $$\mathbf{1} =\left( \langle e_n\rangle, \langle e_n,e_{n-1}\rangle,\ldots, \langle e_n,\ldots, e_2\rangle\right)$$ is the standard flag. In coordinates we have
\begin{equation}
	\Gamma_2'(l) = \{ \gamma\in {\rm SL}_n(\mathbb{Z})\colon \gamma^{\top} \in B(\mathbb{F}_l) \text{ mod }l\}.
\end{equation}
Note that $\Gamma_2(l)^{\top} = \Gamma_2'(l)$ is the lattice we worked with above. The conjugation appears for technical reasons.

We can prove the following counting result:

\begin{theorem}\label{th:counting}
There exists a constant $C=C(n)>0$ such that for every prime $l$ and $\epsilon>0$ we have
\begin{equation}
	\sharp\{(\gamma,x)\in {\rm SL}_n(\mathbb{Z})\times \mathfrak{B}_l\colon \Vert \gamma\Vert_{\infty}\leq T,\, \Phi_l(\gamma)(x)=x  \}\ll_{\epsilon,n} (Tl)^{\epsilon}[T^{n(n-1)}+l^{\frac{n(n-1)}{2}}T^{\frac{n(n-1)}{2}}].\nonumber
\end{equation}
\end{theorem}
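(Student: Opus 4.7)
The plan is to deduce Theorem~\ref{th:counting} from Proposition~\ref{orb_bounds} via the method of \cite{JK}, which reduces counting statements of this shape to a density estimate of the type established in Theorem~\ref{th_dens}. There are three substantive steps: translate the combinatorial counting problem into a trace on $L^2(X_l)$, apply Arthur's truncated trace formula, and invoke Proposition~\ref{orb_bounds}.

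First I would translate the problem. The action of ${\rm SL}_n(\mathbb{Z})$ on $B_l$ factors through ${\rm SL}_n(\mathbb{F}_l)$ and is transitive, so $|B_l|\asymp\mathcal{V}_l$, and the stabiliser of the standard flag $\mathbf{1}$ is precisely $\Gamma_2'(l)=\Gamma_2(l)^{\top}$. Choosing lifts $g_x\in{\rm SL}_n(\mathbb{Z})$ of coset representatives with $g_x\mathbf{1}=x$, the quantity to be estimated equals
\begin{equation}
\sum_{x\in B_l}\sharp\{\gamma\in g_x\Gamma_2'(l)g_x^{-1}:\|\gamma\|_\infty\leq T\}.\nonumber
\end{equation}
Via strong approximation, summing over $x\in B_l$ corresponds to an average over $K_{\rm fin}/I(l)$, which is exactly the content of the finite-place factor of the adelic test function $h_{T,l}$ defined in \eqref{eq:def_glob_test}: the normalisation $h_l=\mathcal{V}_l\cdot\mathbbm{1}_{I(l)}$ accounts for the index $[K_{\rm fin}:I(l)]=\mathcal{V}_l$.

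Next I would majorise $\mathbbm{1}_{\|\gamma\|_\infty\leq T}$ by a fixed multiple of $h_T\ast h_T^{\vee}$. The $h_T\ast h_T^{\vee}$ is nonnegative, supported in the ball of radius $O(T)$, and has value at the identity of size $T^{-n(n-1)}$ (after inverse normalisation). Unfolding and recombining then identifies the counting quantity, up to a $T^{n(n-1)}$ normalising factor, with the diagonal integral $\int_{X_l}K_{h_{T,l}}(g,g)\,dg$ of the automorphic kernel. Arthur's trace formula expresses this as a geometric expansion. The identity conjugacy class contributes $h_{T,l}(\mathbf{1}_n)\cdot\mathrm{vol}(X_l)$, which unwinds to the main term $T^{n(n-1)}$; everything else is jointly controlled by the truncated distribution $J^S(h_{T,l})$.

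Finally, I would invoke Proposition~\ref{orb_bounds} with $\|S\|\ll(Tl)^{\epsilon}$ to get
\begin{equation}
J^S(h_{T,l})\ll(Tl)^{\epsilon}\bigl(1+T^{-n(n-1)}\mathcal{V}_l\bigr).\nonumber
\end{equation}
A direct multiplication by $T^{n(n-1)}$ gives the weaker bound $(Tl)^{\epsilon}(T^{n(n-1)}+\mathcal{V}_l)$; the claimed $\mathcal{V}_lT^{n(n-1)/2}$ arises after a Cauchy--Schwarz splitting of the archimedean amplitude against the level volume, exactly as in the parallel argument for the principal congruence subgroup in \cite{JK}. Combining with the main term and recalling $\mathcal{V}_l=l^{n(n-1)/2}$ yields the bound $(Tl)^{\epsilon}\bigl(T^{n(n-1)}+l^{n(n-1)/2}T^{n(n-1)/2}\bigr)$ stated in the theorem.

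The main obstacle is not mathematical but bookkeeping: translating between the classical norm $\|\cdot\|_\infty$ and the adelic framework, reconciling Tamagawa, Haar, and counting measures at each place, and carrying out the Cauchy--Schwarz step that converts the multiplicative factor $1+T^{-n(n-1)}\mathcal{V}_l$ inside $J^S$ into the additive split $T^{n(n-1)}+\mathcal{V}_lT^{n(n-1)/2}$. The only substantive analytic input is Theorem~\ref{th_dens}, which enters exclusively through Proposition~\ref{orb_bounds}; once that is available, the argument is a direct transcription of the JK machinery with the Iwahori-type level $I(l)$ in place of $K_{\rm fin}(q)$, using that $\dim_{\mathbb{C}}\pi_l^{I_l(l)}\ll_n 1$ (Lemma~\ref{5.1}) so that the local multiplicities at $l$ cause no harm.
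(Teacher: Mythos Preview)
Your overall plan---translate to an automorphic kernel on $X_l$, pass through Arthur's truncation, and feed in Proposition~\ref{orb_bounds}---is exactly the route the paper takes. But there is a real gap in the archimedean majorisation step, and it is precisely this step that produces the exponent $\tfrac{n(n-1)}{2}$ in the second term.

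You assert that $h_T\ast h_T^{\vee}$ is supported in a ball of radius $O(T)$ and that $\mathbbm{1}_{\|\gamma\|\le T}$ is dominated by $T^{n(n-1)}\cdot[h_T\ast h_T^{\vee}]$. Neither is correct. The norm is (up to a constant depending on $n$) submultiplicative, so the support of the convolution is a ball of radius of order $T^2$ (or worse), not $T$. More damagingly, for $\|g\|$ of size $T$ the overlap $\{h:\|h\|\ll T,\ \|gh\|\ll T\}$ has volume only $O_n(1)$, so one gets merely $[h_T\ast h_T^{\vee}](g)\gg T^{-2n(n-1)}$, and the majorisation constant is $T^{2n(n-1)}$ rather than $T^{n(n-1)}$. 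Carried through, this yields $(Tl)^{\epsilon}\bigl(T^{2n(n-1)}+T^{n(n-1)}\mathcal{V}_l\bigr)$, which is too weak. Your ``Cauchy--Schwarz splitting'' does not repair this: note that the bound $(Tl)^{\epsilon}(T^{n(n-1)}+\mathcal{V}_l)$ you wrote down would in fact be \emph{stronger} than the theorem for $T\ge 1$, not weaker, so there is nothing to split; the point is that your argument does not actually produce that bound.

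The paper's fix, hidden in the citation of \cite[Lemma~8.7 and (8.5)]{JK}, is to replace $T$ by $\sqrt{T}$ in the kernel: one majorises $\mathbbm{1}_{\|g\|\le T}$ by ${\rm Vol}(B_T)\cdot[h_{C\sqrt{T}}\ast h_{C\sqrt{T}}^{\vee}]$. The reason this works with the correct constant is that any $g$ with $\|g\|\le T$ factors (via polar decomposition) as $g=h_1h_2^{-1}$ with $\|h_i\|\ll\sqrt{T}$, so for $h$ in a bounded neighbourhood of $h_2$ both factors in the convolution are of size ${\rm Vol}(B_{\sqrt{T}})^{-1}$, giving $[h_{\sqrt{T}}\ast h_{\sqrt{T}}^{\vee}](g)\gg T^{-n(n-1)}$. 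One then applies Proposition~\ref{orb_bounds} at the parameter $\sqrt{T}$, obtaining $1+T^{-n(n-1)/2}\mathcal{V}_l$ in place of $1+T^{-n(n-1)}\mathcal{V}_l$; multiplying by $T^{n(n-1)}$ now gives exactly the stated bound. The $\sqrt{T}$ substitution is the entire mechanism behind the cross term---there is no separate Cauchy--Schwarz step.
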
   

This can be compared to \cite[Theorem~5.2]{KL}. Note that the proof in loc. cit is a direct counting argument. To make this easier the archimedean constraint $\Vert\gamma\Vert\leq T$ is replaced by the stronger condition $\Vert \gamma\Vert_{\infty} \cdot\Vert \gamma^{-1}\Vert_{\infty}\leq T$, which changes the exponents in $T$. Here we give an spectral argument based on ideas from \cite{JK}:

\begin{proof}[Proof of Theorem~\ref{th:counting}]
Recall the global test function $h_{T,l}$ defined in \eqref{eq:def_glob_test}. Note that the archimedean component is $h_T\ast h_T^{\vee}$ where $h_T$ agrees with the test function from  \cite[Section~2.7]{AB} (see also \cite[Section~8]{JK}). We record the upper bound
\begin{equation}
	\sharp\{(\gamma,x)\in {\rm SL}_n(\mathbb{Z})\times \mathfrak{B}_l\colon \Vert \gamma\Vert_{\infty}\leq T,\, \Phi_l(\gamma)(x)=x  \} \ll {\rm Vol}(B_T)\cdot \sum_{x\in \Gamma_2(l)\backslash {\rm SL}_n(\mathbb{Z})}\sum_{\gamma\in \Gamma_2(l)}[h_{C_1\sqrt{T}}\ast h_{C_1\sqrt{T}}](x^{-1}\gamma x).\label{eq:first_automorphisation}
\end{equation}
Here we have used the inequality $\mathbbm{1}_{B_{T}} \ll 	\mathbbm{1}_{B_{C_1\sqrt{T}}} \ast \mathbbm{1}_{B_{C_1\sqrt{T}}}$, for some constant $C_1>0$, given on \cite[p.38]{JK} and inserted the smooth weight using $\mathbbm{1}_{B_{C_1\sqrt{T}}} \ll h_{C_1\sqrt{T}}$.

Next we define the automorphic kernel
\begin{equation}
	K_{h_{T,l}}(x,y) = \sum_{\gamma\in {\rm GL}_n(\mathbb{Q})}h_{T,l}(x^{-1}\gamma y), \text{ for }x,y\in \textrm{GL}_n(\mathbb{A}).\nonumber
\end{equation}
We write $\iota_{\infty}$ for the embedding of $\textrm{GL}_n(\mathbb{R})$ in $\textrm{GL}_n(\mathbb{A})$ at the archimedean place. Using \eqref{eq:intersection} we see that
\begin{equation}
	K_{h_{T,l}}(\iota_{\infty}(x),\iota_{\infty}(x)) = \frac{1}{\textrm{Vol}(I(l))}\sum_{\gamma\in \Gamma_2(l)}[h_T\ast h_T^{\vee}](x^{-1}\gamma x), \text{ for }x\in \textrm{GL}_n(\mathbb{R}). \nonumber
\end{equation}
After an application of \cite[Lemma~8.7]{JK} this expression for $K_{h_{T,l}}(\iota_{\infty}(x),\iota_{\infty}(x))$ can be inserted in \eqref{eq:first_automorphisation}. We obtain
\begin{multline}
	\sharp\{(\gamma,x)\in {\rm SL}_n(\mathbb{Z})\times \mathfrak{B}_l\colon \Vert \gamma\Vert_{\infty}\leq T,\, \Phi_l(\gamma)(x)=x  \} \\ \ll {\rm Vol}(B_T\times I(l))\cdot \sum_{x\in \Gamma_2(l)\backslash {\rm SL}_n(\mathbb{Z})}K_{h_{C_1\sqrt{T},l}}(\iota_{\infty}(x),\iota_{\infty}(x)),\label{eq:some_ineq}
\end{multline}	
Our goal is now to insert a small additional average in the expression above. This can be done using the arguments from the proof of \cite[Proposition~8.6]{JK}. We first use automorphy and strong approximation to write
\begin{equation}
	\sum_{x\in \Gamma_2(l)\backslash {\rm SL}_n(\mathbb{Z})}K_{h_{C_1\sqrt{T},l}}(\iota_{\infty}(x),\iota_{\infty}(x)) = \sum_{x_{\rm fin}\in K_{\rm fin}/I(l)} K_{h_{C_1\sqrt{T},l}}(x_{\rm fin},x_{\rm fin}).\nonumber
\end{equation}
Next, after replacing $C_1$ by another constant $C_2>0$ we can insert a small integral at the archimedean place. By $I(l)$-invariance we can also add the corresponding integral at the finite places. We arrive at
\begin{equation}
	\sum_{x_{\rm fin}\in K_{\rm fin}/I(l)} K_{h_{C_1\sqrt{T},l}}(x_{\rm fin},x_{\rm fin}) \ll \frac{1}{{\rm Vol}(I(l))}\sum_{x_{\rm fin}\in K_{\rm fin}/I(l)}\int_{{\rm GL}_n(\mathbb{Q})\backslash [B_{R_0}\times I(l)]}K_{h_{C_2\sqrt{T},l}}(x_{\rm fin}y,x_{\rm fin}y)dy,\nonumber
\end{equation}
for a fixed $R_0>n$. Combining sum and integral and inserting the resulting inequality in \eqref{eq:some_ineq} leads to
\begin{equation}
	\sharp\{(\gamma,x)\in {\rm SL}_n(\mathbb{Z})\times \mathfrak{B}_l\colon \Vert \gamma\Vert_{\infty}\leq T,\, \Phi_l(\gamma)(x)=x  \} \ll {\rm Vol}(B_T)\cdot\int_{\Omega} K_{h_{C_2\sqrt{T},l}}(x,x)dx,\nonumber
\end{equation}
where $\Omega={\rm GL}_n(\mathbb{Q})\backslash [B_{R_0}\times K_{\rm fin}]$. Recall, for example from \cite{Ma}, that 
\begin{equation}
	{\rm Vol}(B_T)\ll T^{n(n-1)}.\nonumber
\end{equation}
Thus mimicking the argument from the proof of \cite[Proposition~8.3]{JK} we get
\begin{equation}
	\sharp\{(\gamma,x)\in {\rm SL}_n(\mathbb{Z})\times \mathfrak{B}_l\colon \Vert \gamma\Vert_{\infty}\leq T,\, \Phi_l(\gamma)(x)=x  \} \ll T^{n(n-1)}\cdot J^S(h_{C_2\sqrt{T},l}) \nonumber
\end{equation}
for some $d(S)\ll 1+\log(T)$. The desired estimate follows from the technical estimate of $J^S(h_{C_2\sqrt{T},l})$ given in Proposition~\ref{orb_bounds} above. Indeed, we get
\begin{equation}
	\sharp\{(\gamma,x)\in {\rm SL}_n(\mathbb{Z})\times \mathfrak{B}_l\colon \Vert \gamma\Vert_{\infty}\leq T,\, \Phi_l(\gamma)(x)=x  \} \ll (Tl)^{\epsilon}[T^{n(n-1)}+l^{\frac{n(n-1)}{2}}T^{\frac{n(n-1)}{2}}].\nonumber
\end{equation}
\end{proof}

Similarly one also obtains the following lifting theorem:

\begin{theorem}\label{th:lifting}
Let $l$ be a prime and let $\mathfrak{B}_l$ and $\Phi_l$ be as above. Then for every $\epsilon>0$, as $l\to\infty$, there exists a set $Y\subset \mathfrak{B}_l$ of size $\sharp Y\geq (1-o_{\epsilon}(1))\sharp \mathfrak{B}_l$, such that for every $x\in Y$ there exists a set $Z_x\subset \mathfrak{B}_l$ of size $\sharp Z_x\geq  (1-o_{\epsilon}(1))\sharp \mathfrak{B}_l$, such that for every $y\in Z_x$ there exists $\gamma\in {\rm SL}_n(\mathbb{Z})$ satisfying $\Vert \gamma\Vert_{\infty} \leq l^{\frac{1}{2}+\epsilon}$, such that $\Phi_l(\gamma)x=y$.
\end{theorem}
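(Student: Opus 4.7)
The strategy is to derive Theorem~\ref{th:lifting} from Theorem~\ref{th:counting} (equivalently, from Proposition~\ref{orb_bounds}) by a Chebyshev / second moment argument, following the scheme of \cite{KL, JK}.

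Set $T=l^{1/2+\epsilon}$ and for $x,y\in B_l$ write
\begin{equation*}
	N_T(x,y) = \sharp\{\gamma\in{\rm SL}_n(\mathbb{Z}):\Vert\gamma\Vert_\infty\leq T,\ \Phi_l(\gamma)x=y\}.
\end{equation*}
Because $\sharp\{\gamma:\Vert\gamma\Vert_\infty\leq T\}\asymp T^{n(n-1)}$ and $\sharp B_l\asymp\mathcal{V}_l$, the mean value
\begin{equation*}
	\mu:=\frac{1}{\sharp B_l}\sum_{y\in B_l}N_T(x,y)\asymp\frac{T^{n(n-1)}}{\mathcal{V}_l}\gg l^{\epsilon n(n-1)}
\end{equation*}
is independent of $x$ and tends to infinity with $l$. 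Thus on average every $y\in B_l$ lies in many orbits of $x$, and the content of the theorem is concentration around this mean.

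The heart of the argument is a second moment estimate of the shape
\begin{equation*}
	\sum_{y\in B_l}N_T(x,y)^2\ll (Tl)^\epsilon\bigl(\mu^2\mathcal{V}_l+T^{n(n-1)}\bigr),
\end{equation*}
valid for every $x$ outside a small exceptional set. To obtain this I would identify the opened square with the value at $(x,x)$ of the automorphic kernel associated to $h_T\ast h_T^\vee\otimes h_l$, exactly as in the proof of Theorem~\ref{th:counting}; integrating out the $B_l$-translation symmetry reduces the estimate to an integral of $K_{h_{T,l}}$ over the fundamental domain $\Omega$ constructed in the previous section, to which Proposition~\ref{orb_bounds} applies. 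Rewriting $T^{n(n-1)}=\mu\mathcal{V}_l$, the bound becomes $\mathrm{Var}_y(N_T(x,\cdot))\ll(Tl)^\epsilon\mu$.

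A Chebyshev inequality in $y$ then yields, for every $x$ outside that exceptional set,
\begin{equation*}
	\sharp\{y\in B_l:N_T(x,y)=0\}\leq \mathcal{V}_l\cdot\frac{\mathrm{Var}_y}{\mu^2}\ll\mathcal{V}_l\cdot(Tl)^\epsilon\mu^{-1}=o_\epsilon(\mathcal{V}_l),
\end{equation*}
which produces the set $Z_x$ of density $1-o_\epsilon(1)$. A second Chebyshev argument in $x$, controlling the $L^2$-average of the variance via the same kernel bound, extracts the set $Y$ of density $1-o_\epsilon(1)$. The main obstacle is not any individual step but the delicate balancing of exponents: the choice $T=l^{1/2+\epsilon}$ sits precisely at the threshold $T^{n(n-1)}>\mathcal{V}_l$, and at that threshold Proposition~\ref{orb_bounds}, which itself rests on the density Theorem~\ref{th_dens} and the new Proposition~\ref{prop:crucial}, is only just strong enough to make the variance small compared to $\mu^2$ and allow Chebyshev to close the argument.
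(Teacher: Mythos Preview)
Your proposal is essentially correct and matches what the paper intends. The paper does not actually give a proof of Theorem~\ref{th:lifting}: it notes that the result can be derived from (a version of) Theorem~\ref{th:counting} via the spectral gap as in \cite{KL}, or by a direct spectral argument following \cite{JK} and \cite{AB}, and then explicitly says ``We omit the details.'' Your outline---identify the opened second moment $\sum_y N_T(x,y)^2$ with the diagonal value of the automorphic kernel for $h_T\ast h_T^\vee\otimes h_l$, bound its integral over the fundamental domain by Proposition~\ref{orb_bounds}, and run a double Chebyshev (first in $x$ to get the good set $Y$, then in $y$ to get $Z_x$)---is precisely the \cite{JK}-style argument the paper alludes to, with the numerology $T=l^{1/2+\epsilon}$, $\mu\asymp l^{\epsilon n(n-1)}$ checking out at the threshold.

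One small clarification: the pointwise second moment bound you write ``for every $x$ outside a small exceptional set'' is not obtained directly but only after first averaging over $x$ (which is what the kernel integral does) and then applying Chebyshev in $x$; you acknowledge this in the next paragraph, but the order of presentation is slightly inverted. This is cosmetic rather than a gap.
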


This generalizes \cite[Theorem~5.1]{KL} from ${\rm SL}_3$ to ${\rm SL}_n$. Note that in loc. cit. the result is derived from (a version of) Theorem~\ref{th:counting} directly using the spectral gap. This is possible in general. On the other hand, one can also give a direct spectral proof following \cite{JK} (and \cite{AB}). We omit the details.

\end{document}